\newtheorem{thm}{Theorem}[section]
\newtheorem{theorem}[thm]{Theorem}
\newtheorem{proposition}[thm]{Proposition}
\newtheorem{lemma}[thm]{Lemma}
\newtheorem{claim}[thm]{Claim}
\title{Count Matroids of Group-Labeled Graphs\footnote{Part of this research was conducted when the first author was a graduate student at the University of Tokyo.}}
\author{Rintaro Ikeshita\thanks{e-mail: {\tt  ikeshita.rintaro@gmail.com}} \and Shin-ichi Tanigawa\thanks{Research Institute for Mathematical Sciences,
Kyoto University, Sakyo-ku, Kyoto 606-8502, Japan, and
Centrum Wiskunde $\&$ Informatica (CWI), Postbus 94079, 1090 GB
Amsterdam, The Netherlands.
e-mail: {\tt tanigawa@kurims.kyoto-u.ac.jp}}}
\begin{document}
\maketitle

\begin{abstract}
A graph $G=(V,E)$ is called $(k,\ell)$-sparse if $|F|\leq k|V(F)|-\ell$ for any nonempty $F\subseteq E$,
where $V(F)$ denotes the set of vertices incident to $F$.
It is known that the family of the edge sets of $(k,\ell)$-sparse subgraphs  forms the family of independent sets of a matroid, called the $(k,\ell)$-count matroid of $G$.
In this paper we shall investigate lifts of the $(k,\ell)$-count matroids by using group labelings on the edge set. By introducing a new notion called near-balancedness, we shall identify a new class of matroids whose independence condition is described as a count condition of the form
$|F|\leq k|V(F)|-\ell +\alpha_{\psi}(F)$ for   some function $\alpha_{\psi}$   determined by a given group labeling $\psi$ on $E$.
\end{abstract}

\begin{quote}
{\it Keywords: } Count matroids, Group-labeled graphs, Rigidity matroids, Rigidity of symmetric frameworks 
\end{quote}

\section{Count Matroids}
A {\em $\Gamma$-labeled graph} $(G,\psi)$ is a pair of a directed graph $G=(V,E)$
and an assignment $\psi$ of an element of a group $\Gamma$ with each oriented edge.
Although $G$ is directed, its orientation is used only for the reference of the gains, and we are free to change the orientation of each edge  by imposing the property on that if an edge has a label $g$ in one direction, then it has $g^{-1}$ in the other direction. Therefore we often do not distinguish between $G$ and the underlying undirected graph.
By using the group-labeling one can define variants of graphic matroids.
Among such variants, {\em Dowling geometries}~\cite{dowling1973class}, 
or their restrictions,  {\em frame matroids}~\cite{zaslavsky1991biased, zaslavsky1994}, 
are of most importance in the theory of matroid representations.
In the frame matroid of $(G,\psi)$, an edge set $I$ is independent if and only if each connected component of $I$ contains no cycle or just one cycle which is {\em unbalanced}, i.e., the total gain through the cycle is not equal to the identity. 
By extending the notion of balancedness to any edge subsets such that 
$F\subseteq E$ is {\em unbalanced} (resp.~{\em balanced}) 
if it contains (resp.~does not contain) an unbalanced cycle, 
 the independence condition in the frame matroid can be equivalently written as 
\begin{equation}
\label{eq:1}
|F|\leq |V(F)|-1+\begin{cases}
0 & \text{ if $F$ is balanced} \\
1 & \text{ otherwise}
\end{cases} \qquad (\emptyset \neq F\subseteq I),
\end{equation}
where $V(F)$ denotes the set of vertices incident to $F$.
Notice that, if we ignore the last term, this condition is nothing but the independence condition in the graphic matroid of $G$, and hence the count condition exhibits how the graphic matroid is lifted (see \cite{whittle1989generalisation} for a discussion based on submodular functions).

There is a natural generalization of the count condition for cycle-freeness, known as {\em $(k,\ell)$-sparsity}. 
We say that an edge set $I$ is {\em $(k,\ell)$-sparse} if $|F|\leq k|V(F)|-\ell$ holds for any nonempty $F\subseteq I$. It is known that the set of $(k,\ell)$-sparse edge sets in $G$ forms a matroid on $E$, 
called the {\em $(k,\ell)$-count matroid} of $G$.
For $k\geq \ell$, the $(k,\ell)$-count matroids  appear in several contexts in graph theory and combinatorial optimization as they are the unions of copies of the graphic matroid and the bicircular matroid (see, e.g., \cite{Frank2011}),  
and in particular the $(k,k)$-sparsity condition is Nash-Williams' condition for a graph to be decomposed into $k$ edge-disjoint forests.
The $(k,\ell)$-count matroids  appear in rigidity theory and scene analysis for various kinds of pairs of $k$ and $\ell$ (see, e.g., \cite{Whitley:1997}).

Since the  $(1,1)$-count matroid coincides with the graphic matroid, it is natural to ask when a count condition of the form 
\begin{equation}
\label{eq:count_intro}
|F|\leq k|V(F)|-\ell+\alpha_{\psi}(F) \qquad (\emptyset\neq F\subseteq I)
\end{equation}
for some function $\alpha_{\psi}$ determined by the group labeling induces a matroid of $(G,\psi)$. 
In this paper we shall establish a general construction of $\alpha_{\psi}$ for which the count condition induces a matroid. Our work is in fact motivated from characterizations of the rigidity of graphs with symmetry. Recent works on this subject reveal connections of the infinitesimal rigidity of symmetric bar-joint frameworks with count conditions of the form (\ref{eq:count_intro}) on the quotient group-labeled graphs~\cite{mt13,mt14,ross2011, t,jkt, ns}, where each symmetry and each rigidity model gives a distinct $\alpha_{\psi}$.
In Section~2 we give examples, several of which were not known to form matroids before. 
In this context it is crucial to know whether a necessary count condition forms a matroid or not (see, e.g., \cite{mt13, mt14, t, jkt, ns}).

Our construction uses more refined properties of group-labelings than balancedness.
To explain this we need to introduce some notation.
Let $(G,\psi)$ be a $\Gamma$-labeled graph.
The set of nonempty connected edge sets in $G$ is denoted by ${\cal C}(G)$.
A {\em walk} in $G$ is a sequence $W=v_0, e_1, v_1, e_2, \dots, e_k, v_k$ of vertices and edges such that 
$v_{i-1}$ and $v_i$ are the endvertices of $e_i$ for every $1\leq i\leq k$.
The {\em gain} $\psi(W)$ of the walk $W$ is defined to be $\psi(e_1)^{\sigma(e_1)}\cdot \psi(e_2)^{\sigma(e_2)}\cdot \dots \cdot \psi(e_k)^{\sigma(e_k)}$,
where $\sigma(e)=1$ if $W$ traces $e$ in the forward direction and otherwise
$\sigma(e)=-1$.
For $F\in {\cal C}(G)$ and $v\in V(F)$ 
let $\langle F\rangle_{v,\psi}$ be the subgroup of $\Gamma$ generated by 
$\psi(W)$ for all closed walks $W$ starting at $v$ and using only edges in $F$.
It is known that $\langle F\rangle_{v, \psi}$ is conjugate to $\langle F\rangle_{u, \psi}$  for any $u, v \in V(F)$ (see, e.g., \cite{jkt}).
Hence the conjugate class is uniquely determined for each $F\in {\cal C}(G)$, which is denoted by $[F]$. 

For a group $\Gamma$ and $S\subseteq \Gamma$, let $\langle S\rangle$ be the subgroup generated by elements in $S$ and let $[S]$ be the conjugate class of $\langle S\rangle$ in $\Gamma$.
Also the identity of $\Gamma$ is denoted by $1_{\Gamma}$.

We say that a function $\alpha:2^{\Gamma}\rightarrow \mathbb{Z}$ is {\em polymatroidal} if
\begin{description}
\item[(c1)] $\alpha(\emptyset)=0$,
\item[(c2)] $\alpha(X)+\alpha(Y)\geq \alpha(X\cup Y)+\alpha(X\cap Y)$ for any $X, Y\subseteq \Gamma$,
\item[(c3)] $\alpha(X)\leq \alpha(Y)$ for any $X\subseteq Y\subseteq \Gamma$,
\item[(c4)] $\alpha(\gamma X\gamma^{-1})=\alpha(X)$ for any $X\subseteq \Gamma$ and $\gamma \in \Gamma$,
\item[(c5)] $\alpha(\langle X\rangle)=\alpha(X)$ for any $X\subseteq \Gamma$.
\end{description}
Since $\alpha$ is closed under taking the closure and the conjugate, $\alpha$ induces a class function (i.e., a function on the conjugate classes), which is denoted by $\tilde{\alpha}$.
For $F\in {\cal C}(G)$ we often abbreviate $\tilde{\alpha}([F])$ by $\tilde{\alpha}(F)$.


The following was proved in \cite{t}.
\begin{thm}[Tanigawa~\cite{t}]
\label{thm:tan}
Let $(G,\psi)$ be a $\Gamma$-labeled graph, $\alpha:2^{\Gamma}\rightarrow \{0,1,\dots, k\}$ be a polymatroidal function.
Define $f_{\alpha}:{\cal C}(G)\rightarrow \mathbb{Z}$ by
\[
f_{\alpha}(F)=k|V(F)|-k+\tilde{\alpha}(F) \qquad (F\in {\cal C}(G)).
\]
Then the set ${\cal I}_{\alpha}(G)=\{I\subseteq E(G)\mid |F|\leq f_{\alpha}(F)\ \forall F\in {\cal C}(G)\cap 2^I \}$
forms the family of independent sets in a matroid.
\end{thm}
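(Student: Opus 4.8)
The plan is to realize $f_\alpha$ as the restriction, to connected edge sets, of a non-negative, monotone non-decreasing, submodular, integer-valued function on $2^{E(G)}$, and then to invoke the classical construction of a matroid from such a function. Throughout I use that $\tilde\alpha(F)=\alpha(\langle F\rangle_{v,\psi})$ for every $v\in V(F)$, which is well defined by the conjugacy fact and (c4), (c5). For $F\subseteq E(G)$ with edge-components $F_1,\dots,F_r$ set $\hat f_\alpha(F)=\sum_{i=1}^{r}f_\alpha(F_i)$ (and $\hat f_\alpha(\emptyset)=0$). Since $|F|=\sum_i|F_i|$ and each $F_i$ is connected, one checks immediately that
\[
{\cal I}_\alpha(G)=\{I\subseteq E(G) : |F|\le\hat f_\alpha(F)\ \text{for every}\ \emptyset\neq F\subseteq I\},
\]
so (see, e.g., \cite{Frank2011}) it suffices to show that $\hat f_\alpha$ is non-negative, monotone, and submodular. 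Non-negativity and monotonicity come from a routine case analysis on adding one edge: a single edge $e$ satisfies $|V(\{e\})|\ge 1$, hence $f_\alpha(\{e\})\ge k|V(\{e\})|-k\ge 0$ since $\tilde\alpha\ge 0$; along a chain of connected sets $\tilde\alpha$ is non-decreasing because $F\subseteq F'$ and $v\in V(F)$ give $\langle F\rangle_{v,\psi}\subseteq\langle F'\rangle_{v,\psi}$, to which (c3) applies; attaching a pendant vertex leaves the generated group unchanged while raising $k|V(\cdot)|$ by $k$; and if an edge $e$ merges two components $C_1,C_2$, then $\langle C_1\cup C_2\cup\{e\}\rangle$ contains a conjugate of each of $\langle C_1\rangle,\langle C_2\rangle$, so $\tilde\alpha(C_1\cup C_2\cup\{e\})\ge\max\{\tilde\alpha(C_1),\tilde\alpha(C_2)\}\ge\tilde\alpha(C_1)+\tilde\alpha(C_2)-k$ (as $\tilde\alpha\le k$), which makes up for the lost $-k$.

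For submodularity I would use the standard uncrossing reduction: $\hat f_\alpha$ is submodular on $2^{E(G)}$ once
\[
f_\alpha(X)+f_\alpha(Y)\ \ge\ f_\alpha(X\cup Y)+\hat f_\alpha(X\cap Y)
\]
holds for all $X,Y\in{\cal C}(G)$ with $X\cup Y\in{\cal C}(G)$; the remaining (disconnected) cases follow by iterating this inequality over the components of $X$ and of $Y$. Let $C_1,\dots,C_q$ be the components of $X\cap Y$ and put $\delta_V:=|V(X)\cap V(Y)|-|V(X\cap Y)|\ge 0$. Expanding the displayed inequality, and using that the $V(C_i)$ are pairwise disjoint, transforms it into
\[
k(q-1)+k\,\delta_V+\tilde\alpha(X)+\tilde\alpha(Y)\ \ge\ \tilde\alpha(X\cup Y)+\sum_{i=1}^{q}\tilde\alpha(C_i).
\]
Using only $0\le\tilde\alpha(\cdot)\le k$, the monotonicity bound $\tilde\alpha(C_i)\le\min\{\tilde\alpha(X),\tilde\alpha(Y)\}$, and $\tilde\alpha(X\cup Y)\le k$, a short arithmetic check settles this whenever $q\ge 2$, or $q=1$ with $\delta_V\ge 1$, or $q=0$ with $|V(X)\cap V(Y)|\ge 2$.

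The remaining cases — $q=1$ with $\delta_V=0$, and $q=0$ with $|V(X)\cap V(Y)|=1$ — reduce to proving
\[
\tilde\alpha(X)+\tilde\alpha(Y)\ \ge\ \tilde\alpha(X\cup Y)+\tilde\alpha(X\cap Y),
\]
where the last term is read as $0$ if $X\cap Y=\emptyset$; in both situations the graph with vertex set $V(X)\cap V(Y)$ and edge set $X\cap Y$ is connected, and we fix $v\in V(X)\cap V(Y)$. The crux is a combinatorial Seifert--van Kampen statement: \emph{if that graph is connected, then $\langle X\cup Y\rangle_{v,\psi}=\langle\,\langle X\rangle_{v,\psi}\cup\langle Y\rangle_{v,\psi}\,\rangle$.} To prove it, cut a closed walk at $v$ in $X\cup Y$ at each visit to $V(X)\cap V(Y)$; each resulting piece has all interior vertices in $V(X)\setminus V(Y)$ or all in $V(Y)\setminus V(X)$ (a transition would force an interior vertex into $V(X)\cap V(Y)$), hence lies entirely in $X$ or entirely in $Y$, and pre/appending a path inside the connected graph on $V(X)\cap V(Y)$ turns it into a closed walk at $v$ lying in $X$ or in $Y$ with the same gain. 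Granting this lemma, $\alpha(\langle X\rangle_{v,\psi}\cup\langle Y\rangle_{v,\psi})=\tilde\alpha(X\cup Y)$ by (c5), while $\langle X\cap Y\rangle_{v,\psi}\subseteq\langle X\rangle_{v,\psi}\cap\langle Y\rangle_{v,\psi}$ together with (c3) gives $\alpha(\langle X\rangle_{v,\psi}\cap\langle Y\rangle_{v,\psi})\ge\tilde\alpha(X\cap Y)$; feeding both into the polymatroid inequality (c2) for the subgroups $\langle X\rangle_{v,\psi}$ and $\langle Y\rangle_{v,\psi}$ yields exactly the displayed inequality. Since $\hat f_\alpha$ is then non-negative, monotone, integer-valued and submodular, ${\cal I}_\alpha(G)$ is the family of independent sets of a matroid. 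I expect the main obstacle to be the van Kampen lemma — in particular, noticing that it is needed only when the graph on $V(X)\cap V(Y)$ is connected (the other configurations being dispatched by the crude bounds on $\tilde\alpha$), and checking carefully that every piece of the walk can be rebased at $v$ through that graph.
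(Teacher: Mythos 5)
Your proposal is correct and takes essentially the same route the paper attributes to~\cite{t}: extend $f_\alpha$ to arbitrary edge sets by summing over connected components to get $\hat f_\alpha$, show $\hat f_\alpha$ is non-negative, monotone, and submodular, and invoke Edmonds' theorem. The paper does not reproduce the proof (it defers to~\cite{t}) but explicitly describes this same strategy in Section~4, so your argument — including the case split by $(q,\delta_V)$ and the ``van Kampen'' identity $\langle X\cup Y\rangle_{v,\psi}=\langle\langle X\rangle_{v,\psi}\cup\langle Y\rangle_{v,\psi}\rangle$ when the common part is connected, which feeds into (c2)/(c3)/(c5) — matches the intended method.
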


In this paper we shall extend Theorem~\ref{thm:tan}  for  general $\ell$. 
Interestingly,  replacing just ''$k|V(F)|-k$" with "$k|V(F)|-\ell$" in the definition of $f_{\alpha}$ may not produces a matroid  in general as shown in Example 3 in the next section, and our extension is achieved by introducing  a new notion, called near-balancedness.
Let $v$ be a vertex of $(G, \psi)$ and $\{E_1, E_2\}$ be a bipartition of the set of non-loop edges incident to $v$. 
If $v$ is not incident to a loop, 
then a {\em split} of $(G, \psi)$ (at a vertex $v$ with respect to a partition $\{E_1, E_2\}$) is 
defined to be a $\Gamma$-labeled graph $(G', \psi')$ obtained from $(G,\psi)$ by splitting $v$ into two vertices $v_1$ and $v_2$ such that $v_i$ is incident to all the edges in $E_i$ for $i = 1, 2$.
If $v$ is incident to a loop, then the split is defined to be a $\Gamma$-labeled graph $(G', \psi')$
obtained from $(G,\psi)$ by splitting $v$ into two vertices $v_1$ and $v_2$ such that $v_i$ is incident to the edges in $E_i$ for $i = 1, 2$, 
each balanced loop at $v$ is connected to $v_1$,
and each unbalanced loop at $v$ is regarded as an arc from $v_1$ to $v_2$, keeping the group-labeling\footnote{By definition of group-labeled graphs, the label of a loop is freely invertible. So, for an unbalanced loop $e$ at $v$ in $(G,\psi)$, the label of the new edge corresponding to $e$ in the split can be either $\psi(e)$ or $\psi(e)^{-1}$.},
where a loop is called {\em balanced} (resp., {\em unbalanced}) if its label is identity (resp., non-indentity).

We say that a connected set $F$ is  {\em near-balanced} if it is not balanced and there is a split of $(G,\psi)$ in which $F$ results in a balanced set.

\vspace{0.5\baselineskip}
\noindent
{\it Example 1.}
We give an example of near-balanced sets using Figure~\ref{fig:Z2}.
Let $e_1$ denote the edge from $v_2$ to $v_3$, and let $e_2$ and $e_3$ denote the edges from $v_1$ to $v_2$ with $\psi (e_2) = 1_\Gamma$ and $\psi (e_3) = g\neq 1_{\Gamma}$, respectively.
Consider $I_1=E(G)\setminus \{e_1\}$ and $I_2=E(G)\setminus \{e_2, e_3\}$ for example.
Then $I_1$ is not near-balanced since it contains two vertex-disjoint unbalanced cycles,
and $I_2$ is near-balanced since it is  balanced in a split of $(G,\psi)$ at $v_3$.
See Figure~\ref{fig:Z2}(d).
By the same reason $I_2\cup \{e_2\}$ is near-balanced.
On the other hand  the property of $I_2\cup \{e_3\}$ differs according to the order of $g$.
In fact $I_2\cup \{e_3\}$ is near-balanced  if and only if $g^2=1_{\Gamma}$.

\vspace{\baselineskip}



We also  remark that, for a polymatroidal function $\alpha:2^\Gamma \rightarrow \{0,1,\dots, \ell\}$, 
there is a unique maximum set $S\subseteq \Gamma$ with $\alpha(S)=0$ 
and $S$ actually forms a normal subgroup of $\Gamma$ due to the submodularity and the invariance under conjugation. 
Hence, taking the quotient of $\Gamma$ by $S$, throughout the paper we may assume  that 
\begin{description}
\item[(c6)] $\alpha(\{g\})\neq 0$ for any non-identity $g\in \Gamma$ and $\alpha(\{1_{\Gamma}\})=0$.
\end{description}
(The assumption for $\alpha(\{1_{\Gamma}\})$ can be achieved by adjusting $\ell$ in the following theorem.)
A polymatroidal function $\alpha$ is said to be {\em normalized} if 
it satisfies (c6).

Now we are ready to state  our main theorem for $\ell\leq k+1$.
The statement for $k$ and $\ell$ with $\ell\leq 2k-1$ is given in Section~\ref{sec:proof}.
\begin{thm}
\label{thm:main}
Let $k, \ell$ be integers with $k\geq 1$ and $0\leq \ell\leq k+1$, $(G,\psi)$ be a $\Gamma$-labeled graph, $\alpha:2^{\Gamma}\rightarrow \{0,1,\dots, \ell\}$ be a normalized polymatroidal function such that 
$\alpha(\Gamma')\leq k$ for any $\Gamma'\subseteq \Gamma$ with $\Gamma' \simeq \mathbb{Z}_2$. Define $f_{\alpha}:{\cal C}(G)\rightarrow \mathbb{Z}$ by
\[
f_{\alpha}(F)=k|V(F)|-\ell+
\begin{cases}
\min\{\tilde{\alpha}(F), k\} & (\text{if $F$ is near-balanced}) \\
\tilde{\alpha}(F) & (\text{otherwise}).
\end{cases}
\]
Then the set ${\cal I}_{\alpha}(G)=\{I\subseteq E(G)\mid |F|\leq f_{\alpha}(F)\ \forall F\in {\cal C}(G)\cap 2^I\}$
forms the family of independent sets in a matroid.
\end{thm}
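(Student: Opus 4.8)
The plan is to run the matroid-construction machinery used in \cite{t} to prove Theorem~\ref{thm:tan}. Recall that for a function $g:{\cal C}(G)\to\mathbb{Z}$, the family $\{I\subseteq E(G)\mid |F|\le g(F)\ \forall F\in{\cal C}(G)\cap 2^{I}\}$ is the family of independent sets of a matroid provided $g$ is monotone nondecreasing with respect to inclusion on ${\cal C}(G)$ and satisfies, for all $F_1,F_2\in{\cal C}(G)$ with $F_1\cap F_2\neq\emptyset$,
\[
g(F_1)+g(F_2)\ \geq\ g(F_1\cup F_2)+\sum_{C}g(C),
\]
where $C$ ranges over the connected components of $F_1\cap F_2$. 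So it suffices to establish these two properties for $g=f_\alpha$. In the displayed inequality the count term $k|V(F)|-\ell$ contributes a nonnegative amount to the left-minus-right difference, since $|V(F_1)|+|V(F_2)|\geq|V(F_1\cup F_2)|+|V(F_1\cap F_2)|$ and $F_1\cap F_2$ has at least one component; and it is monotone. Moreover the submodularity of $F\mapsto\tilde\alpha(F)$ on ${\cal C}(G)$ — which follows from (c1)--(c5) through the identities $\langle F_1\cup F_2\rangle_{v,\psi}=\langle\langle F_1\rangle_{v,\psi}\cup\langle F_2\rangle_{v,\psi}\rangle$ for $v\in V(F_1)\cap V(F_2)$ and $\langle C\rangle_{u,\psi}\subseteq\langle F_1\rangle_{u,\psi}\cap\langle F_2\rangle_{u,\psi}$ for $u\in V(C)$ — is already a component of the proof of Theorem~\ref{thm:tan}. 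Thus the whole difficulty is to control how the near-balanced correction interacts with monotonicity and with the displayed inequality.

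First I would develop the combinatorial structure of near-balancedness, isolating: (i) every connected subset of a balanced (resp.\ near-balanced) edge set is balanced (resp.\ balanced or near-balanced) — immediate, because a split of $(G,\psi)$ that makes $F'$ balanced makes every connected $F\subseteq F'$ balanced, the image of $F$ lying inside the (cycle-balanced) image of $F'$; (ii) if $F$ is near-balanced and $(G',\psi')$ is a split making it balanced, then $F$ uses both copies of the split vertex, hence spans exactly $|V(F)|+1$ vertices of $G'$, so the ordinary $(k,\ell)$-count applied in $G'$ yields $|F|\le k|V(F)|-\ell+k$; this is the structural reason the correction is truncated at $k$; (iii) a description of when the union of two near-balanced connected sets $F_1,F_2$ sharing an edge fails to be near-balanced — informally, the balancing splits of $F_1$ and of $F_2$ cannot be realized by a single split, which together with (i) and the subgroup identities forces $F_1\cap F_2$ to be balanced or forces $\langle F_1\cup F_2\rangle$ into a very restricted form; this is where the hypothesis $\alpha(\Gamma')\leq k$ for $\Gamma'\simeq\mathbb{Z}_2$ enters.

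With the structure theory in hand, verifying monotonicity and the displayed inequality for $f_\alpha$ becomes a case analysis indexed by the near-balancedness status of $F_1$, $F_2$, $F_1\cup F_2$ and of each component of $F_1\cap F_2$. When none is near-balanced, both requirements reduce verbatim to those already checked in \cite{t}. When $F_1\cup F_2$ is at least as unbalanced as $F_1,F_2$ — for instance $F_1\cup F_2$ is not near-balanced although some $F_i$ is — the correction only helps: by monotonicity of $\tilde\alpha$ and fact (i) the truncation levels involved are mutually consistent, so the inequality behaves as for a monotone submodular function truncated at the constant $k$, and the nonnegative deficit of the count term absorbs the rest. The genuinely delicate case is that $F_1$ and $F_2$ are near-balanced while $F_1\cup F_2$ is neither balanced nor near-balanced, so that its bound jumps from $k|V(\cdot)|-\ell+k$ up to $k|V(\cdot)|-\ell+\tilde\alpha(F_1\cup F_2)$, reaching $k|V(\cdot)|-\ell+(k+1)$ exactly when $\ell=k+1$. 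There the inequality has a deficit of at most $1$ (all quantities being integers), and I would close it using (iii) to bound $\tilde\alpha$ on the components of $F_1\cap F_2$ or to force $F_1\cap F_2$ balanced, and in the residual extremal subcase using the $\mathbb{Z}_2$-hypothesis together with $\ell\le k+1$ and, where needed, the vertex slack coming from $|V(F_1)\cap V(F_2)|>|V(F_1\cap F_2)|$.

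The main obstacle I expect is step (iii) together with this last case: disentangling two incompatible balancing splits and extracting from them either a bound on $\tilde\alpha(F_1\cap F_2)$ or the presence of a cyclic subgroup of order $2$ is the new phenomenon absent in the $\ell=k$ theory of \cite{t}, and it is the reason for both the restriction $\ell\le k+1$ and the $\mathbb{Z}_2$-hypothesis. (That the naive substitution of $k|V(F)|-\ell$ for $k|V(F)|-k$ fails without any correction is illustrated by Example~3.) For the extension to $\ell\le 2k-1$ announced in Section~\ref{sec:proof}, I would expect $\min\{\tilde\alpha(F),k\}$ to be replaced by a finer correction depending on the least number of vertices a split must spend to balance $F$, with an analogous but longer case analysis.
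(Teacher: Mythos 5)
Your strategy is to establish that $f_\alpha$ is monotone and satisfies, for all $F_1,F_2\in{\cal C}(G)$ with a common edge, the unconditional inequality $f_\alpha(F_1)+f_\alpha(F_2)\geq f_\alpha(F_1\cup F_2)+\sum_{C}f_\alpha(C)$, and then invoke the Edmonds/Nash-Williams machinery. This is exactly the route the authors explicitly say they could not make work: Section~4 notes that for $\ell>k$ the associated set function $\hat f_\alpha$ ``may not be submodular in general'' and that they ``do not know whether our main theorem is a consequence of a general theory of intersecting submodular functions.''

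In fact your displayed inequality is simply false. Take $k=2$, $\ell=3$, $\Gamma=\mathbb{Z}_3$ (written additively), and $\alpha$ as in Example~3 ($\alpha=0$ on the trivial subgroup, $\alpha=3$ otherwise); the hypotheses of Theorem~1.2 hold. Let $X$ have vertices $u,w,a$ and edges $uw$ (label $0$), $uw'$ (label $1$), $ua_1$ (label $0$), $ua_2$ (label $1$), all parallel pairs between the indicated endpoints; let $Y$ have vertices $u,w,b$ and edges $uw$, $uw'$ as before together with $wb_1$ (label $0$), $wb_2$ (label $2$). Then $X$ is near-balanced with unique base $u$ (after a $T$-respecting switch the only non-identity edges are $uw',ua_2$, both labelled $2$ when directed into $u$), $Y$ is near-balanced with unique base $w$ (non-identity edges $uw',wb_2$, both labelled $1$ when directed into $w$), and $X\cap Y=\{uw,uw'\}$ is near-balanced; so $\beta(X)=\beta(Y)=\beta(X\cap Y)=2$. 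However $X\cup Y$ contains the two edge-disjoint unbalanced cycles $\{ua_1,ua_2\}$ and $\{wb_1,wb_2\}$ meeting only in the unbalanced pair $\{uw,uw'\}$; no vertex can be a base, so $X\cup Y$ is not near-balanced and $\beta(X\cup Y)=\tilde\alpha(X\cup Y)=3$. Since $V(X)\cap V(Y)=\{u,w\}=V(X\cap Y)$, the vertex term contributes nothing and your inequality reads $f_\alpha(X)+f_\alpha(Y)=5+5=10$ versus $f_\alpha(X\cup Y)+f_\alpha(X\cap Y)=8+3=11$, a strict violation. This happens in precisely the regime you flagged as ``the main obstacle'': two near-balanced sets with incompatible balancing splits whose union escapes the truncation.

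The paper circumvents this by proving only a \emph{conditional} submodularity of $\beta$ (Lemma~4.5): the inequality $\beta(X)+\beta(Y)\geq\beta(X\cap Y)+\beta(X\cup Y)$ is established only when $X,Y$ are $f_\alpha$-full, $(V(X)\cap V(Y),X\cap Y)$ is connected, $X\cap Y$ is $f_\alpha$-sparse, and, crucially, $|X\cap Y|>k|V(X\cap Y)|-2\ell+\min\{2k,\beta(X)+\beta(Y)\}$. In the example above this cardinality bound fails ($2\not>2$), as does $f_\alpha$-fullness of $X$ and $Y$. The independence axiom~(I3) is then verified directly, not via Edmonds' theorem: Claim~4.8 inside Lemma~4.6 shows that when the union of two tight sets has positive deficit, the intersection automatically satisfies the cardinality bound, so Lemma~4.5 becomes available exactly when it is needed; when the bound fails, the deficit forces tightness by pure counting. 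Your structural observations (i)--(iii) track Propositions~3.1--3.2 and Lemmas~4.1--4.4 and are the right supporting facts, but they must be deployed inside this conditional, deficit-aware framework rather than inside an unconditional submodularity inequality, which does not hold.
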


%


Examples given in the next section show the necessity of the lifting value condition for near-balanced sets and the value condition for $\alpha(\mathbb{Z}_2)$  in Theorem~\ref{thm:main}.

\begin{figure}[t]
\centering
\begin{minipage}{0.48\textwidth}
\centering
\includegraphics[width = 4 cm]{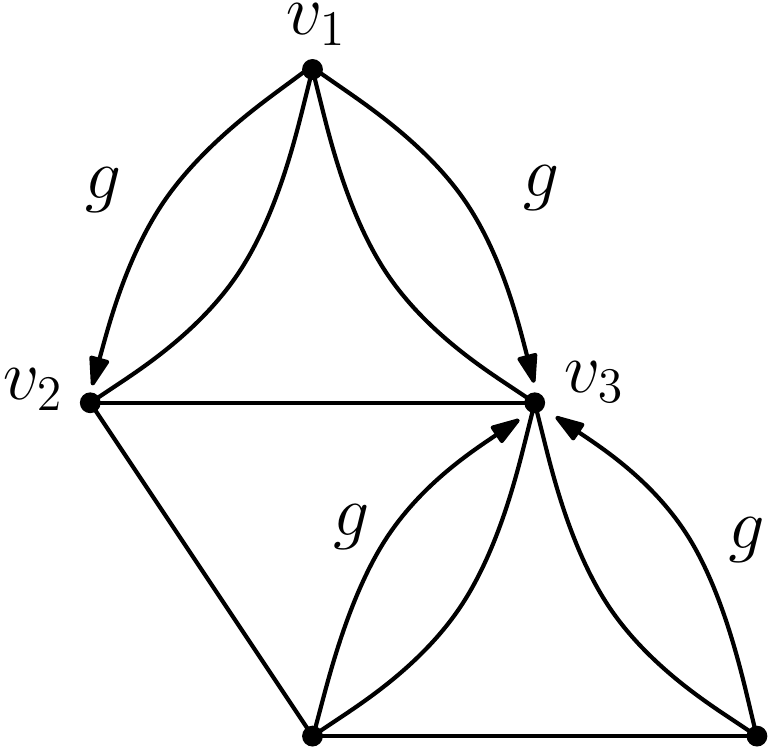}
\par
(a) 
\end{minipage}
\begin{minipage}{0.48\textwidth}
\centering
\includegraphics[width = 4 cm]{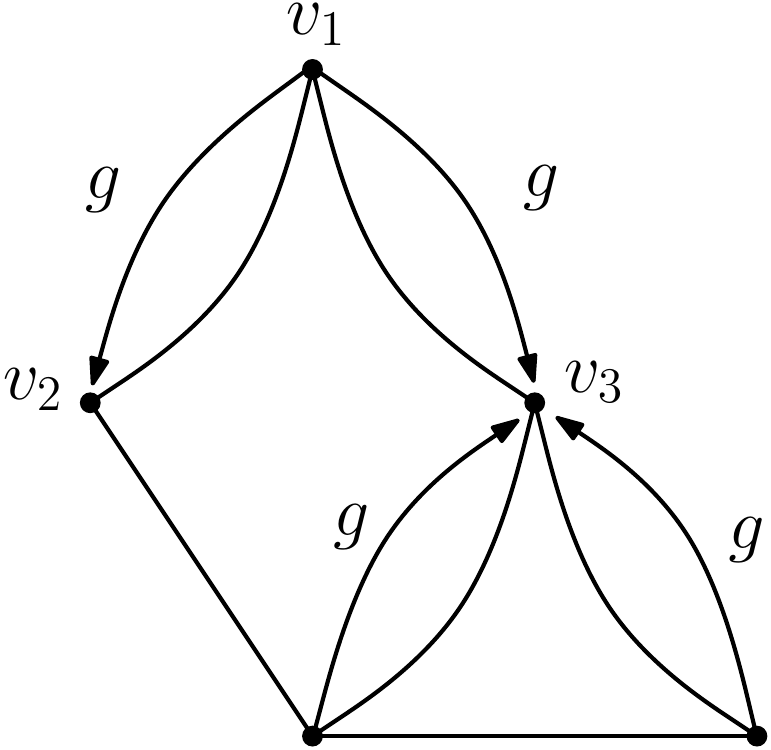}
\par
(b) 
\end{minipage}

\medskip

\begin{minipage}{0.48\textwidth}
\centering
\includegraphics[width = 4 cm]{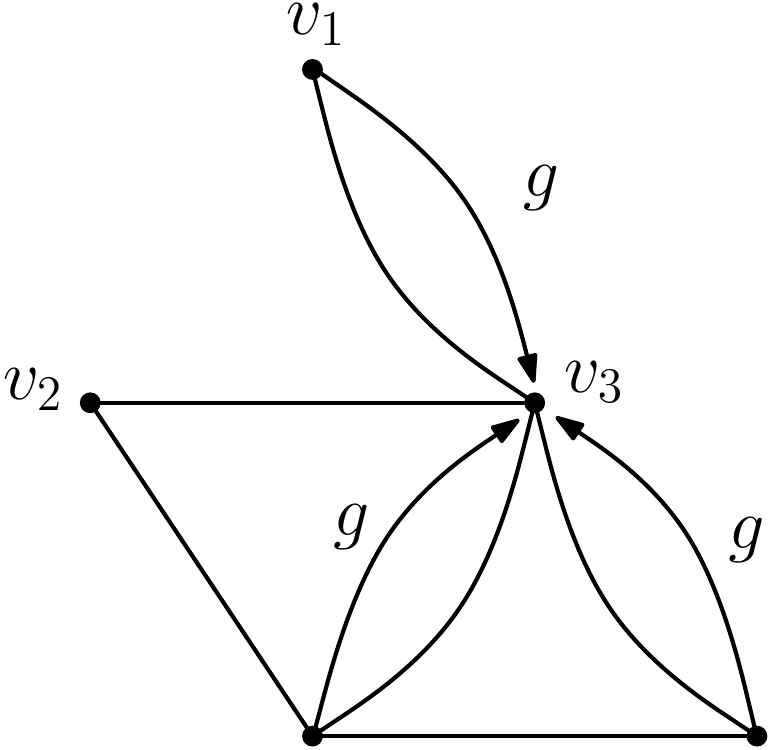}
\par
(c) 
\end{minipage}
\begin{minipage}{0.48\textwidth}
\centering
\includegraphics[width = 4 cm]{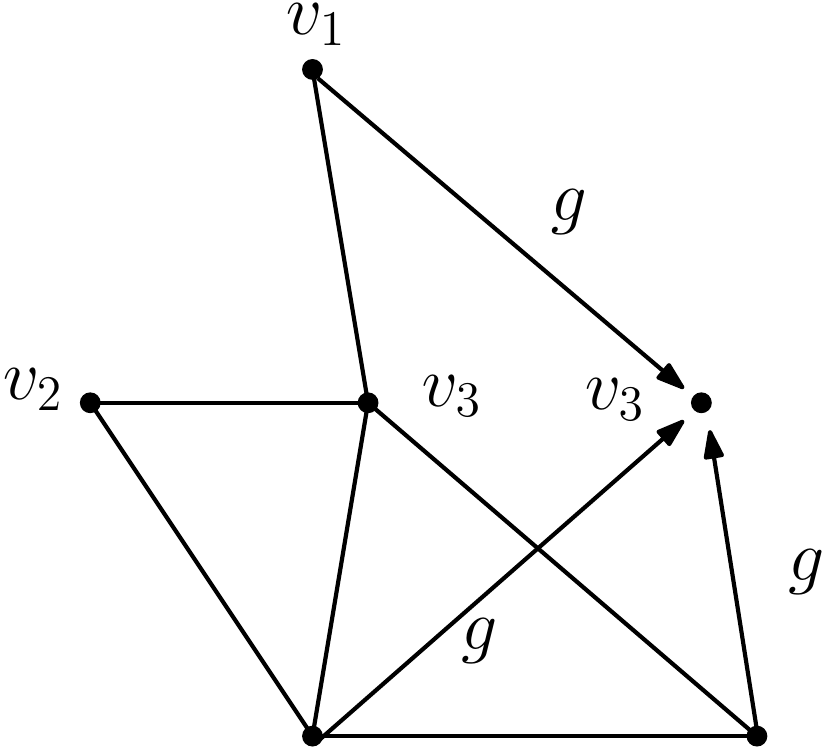}
\par
(d) 
\end{minipage}
\caption{
(a) An example of a $\Gamma$-labeled graph $(G,\psi)$,
where $g \in \Gamma$ is not the identity and every non-labeled edge has the identity label $1_\Gamma$.
(b) A non near-balanced edge set $I_1$, (c) a near-balanced edge set $I_2$, and (d) $I_2$ in a split of $(G,\psi)$ at $v_3$.
}
\label{fig:Z2}
\end{figure}

\section{Examples of Matroids}
Here we give examples of matroids given in Theorem~\ref{thm:main}.

\medskip
\noindent {\it Example 2.}
The union of two copies of the frame matroid followed by Dilworth truncation results in a matroid whose independence condition is written by the following count:
\[
|F|\leq 2|V(F)|-3+\begin{cases}
0 & \text{if $F$ is balanced} \\
2 & \text{otherwise}
\end{cases} \qquad (F\in {\cal C}(G)).
\]
This is the case when $k=2, \ell=3$, and 
\[
\alpha(X)=\begin{cases}
0 & \text{$\langle X\rangle$ is trivial} \\
2 & \text{otherwise}
\end{cases} \qquad (X\subseteq \Gamma).
\]

\medskip

\noindent {\it Example 3.} \label{ex:2}
In the context of graph rigidity, the following count condition appears as a necessary condition for the infinitesimal rigidity of symmetric bar-joint frameworks in the plane:
\[
|F|\leq 2|V(F)|-3+\begin{cases}
0 & \text{if $F$ is balanced} \\
3 & \text{otherwise}
\end{cases} \qquad (F\in {\cal C}(G)).
\]
The corresponding $\alpha$ is given by
\begin{equation*}
\alpha(X)=\begin{cases}
	0 & \text{ $\langle X\rangle$ is trivial} \\
	3 & \text{ otherwise}
\end{cases} \qquad (X\subseteq \Gamma).
\end{equation*}
Csaba Kir{\'a}ly pointed out that this condition does not induce a matroid in general. 
In Figure~\ref{fig:230} we give a smaller example for general groups.

\begin{figure}[t]
\centering
\includegraphics[width = 4 cm]{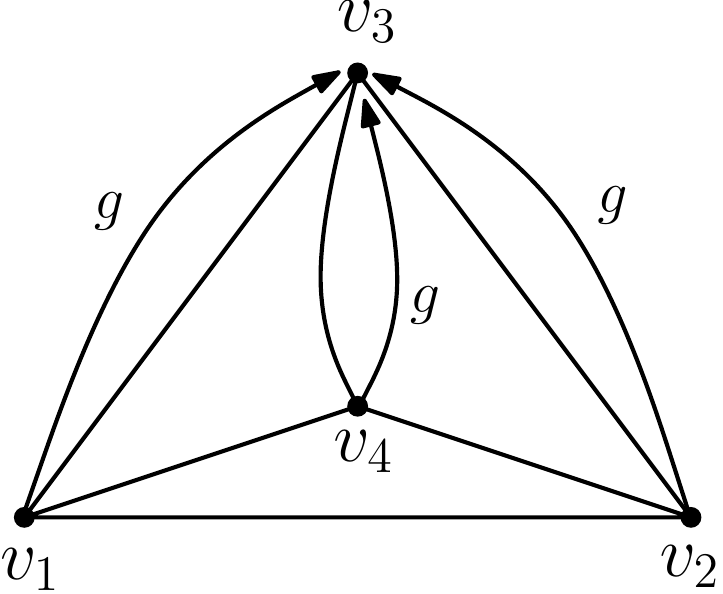}
\caption{
An example of a $\Gamma$-labeled graph $(G,\psi)$ not being a matroid in the count condition in Example~3, 
where $g \in \Gamma$ is not the identity and every non-labeled edge has label $1_\Gamma$.
Let $e_1$ denote the edge from $v_1$ to $v_2$ and $e_2$ and $e_3$ denote the edges from $v_1$ to $v_3$ with $\psi (e_2) = 1_\Gamma$ and $\psi (e_3) = g$, respectively.
Then $E_1 = E(G) \setminus \{ e_1 \}$ and $E_2 = E(G) \setminus \{ e_2, e_3 \}$ 
are maximal edge sets satisfying the count condition with distinct cardinalities.
Indeed, they are maximal because
$E_1 \cup \{ e_1 \}$ violates the $(2,0)$-sparsity while
each of $E_2 \cup \{ e_2 \}$ and $E_2 \cup \{ e_3 \}$ contains a balanced $K_4$, which indicates the violation of the $(2,3)$-sparsity for balanced sets.
}
\label{fig:230}
\end{figure}

Suppose that $\Gamma$ does not contain an element of order two.
Then Theorem~\ref{thm:main} implies that adding one additional condition for near-balanced sets gives rise to a matroid. 
Its independence condition is written as
\[
|F|\leq 2|V(F)|-3+\begin{cases}
0 & \text{if $F$ is balanced} \\
2 & \text{if $F$ is near-balanced} \\
3 & \text{otherwise} 
\end{cases} \quad (F\in {\cal C}(G)).
\]

This count condition still may not induce a matroid if $\Gamma$ contains an element of order two.
Consider the $\Gamma$-labeled graph in Figure~\ref{fig:Z2}, and define $I_1$ and $I_2$ as in Example 1. 
Suppose that $g^2=1_{\Gamma}$.
Then $I_1$ and $I_2$ are maximal sets in $\mathcal{I}_\alpha (G)$.
Indeed, by counting, it can easily be checked that $I_1, I_2 \in \mathcal{I}_\alpha (G)$.
As for the maximality of $I_2$,  observe that, for each $i = 2, 3$, $I_2 \cup \{ e_i \}$ is a near-balanced edge set with $|I_2 \cup \{ e_i \}| = 2|V(I_2 \cup \{ e_i \})|$, which violates the $(2, 1)$-sparsity condition for near-balanced sets. Since $I_1$ and $I_2$ have distinct cardinalities, ${\cal I}_{\alpha}(G)$  does not form the family of independent sets of a matroid.
This example indicates the necessity of the assumption on the value of $\alpha(\mathbb{Z}_2)$ in Theorem~\ref{thm:main}.

Theorem~\ref{thm:main} implies that, even if $\Gamma$ contains an element of order two, the following condition induces a matroid:
\[
|F|\leq 2|V(F)|-3+\begin{cases}
0 & \text{if $F$ is balanced} \\
2 & \text{if $F$ is near-balanced, or $\langle F\rangle_{v, \psi}\simeq \mathbb{Z}_2$ for some $v\in V(F)$} \\
3 & \text{otherwise}.
\end{cases} 
\]
Interestingly these additional conditions turn out to be necessary for the infinitesimal rigidity of symmetric bar-joint frameworks~\cite{st, ikeshita}.

\medskip
\noindent {\it Example 4.}
The following count condition appears when analyzing the infinitesimal rigidity of frameworks with dihedral symmetry on the plane~\cite{jkt}:
\begin{equation*}
|F|\leq 2|V(F)|-3+\begin{cases}
0 & \text{ if $F$ is balanced} \\
2 & \text{ if $\langle F\rangle_{v,\psi}$ is nontrivial and cyclic for some $v\in V(F)$} \\
3 & \text{ otherwise }
  \end{cases} \qquad (F\in {\cal C}(G)).
\end{equation*}
In \cite{jkt} it was shown that the count induces a matroid when $\Gamma$ is dihedral.
The following lemma gives a condition for the corresponding $\alpha$ to be polymatroidal.
\begin{lemma}
The function $\alpha : 2^\Gamma \to \mathbb{Z}$ defined by 
\begin{equation*}
\alpha(X)=\begin{cases}
0 & \text{ $\langle X\rangle$ is trivial} \\
2 & \text{  $\langle X\rangle$ is nontrivial and cyclic} \\
3 & \text{ otherwise}
\end{cases} \qquad (X\subseteq \Gamma).
\end{equation*}
is  polymatroidal if and only if for each element $g\in \Gamma\setminus \{1_{\Gamma}\}$ 
a maximal cyclic subgroup containing $g$ is unique.
\end{lemma}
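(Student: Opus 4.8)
The plan is to check the axioms (c1)--(c5) for $\alpha$, to observe that all of them except submodularity hold with no hypothesis on $\Gamma$, and then to pin down exactly when (c2) holds. Axiom (c1) holds since $\langle\emptyset\rangle$ is trivial; (c5) holds because $\alpha(X)$ depends on $X$ only through $\langle X\rangle$ and $\langle\langle X\rangle\rangle=\langle X\rangle$; (c4) holds because $\gamma\langle X\rangle\gamma^{-1}$ is isomorphic to $\langle X\rangle$, hence trivial (resp.\ cyclic) precisely when $\langle X\rangle$ is. Monotonicity (c3) is also unconditional: if $X\subseteq Y$ then $\langle X\rangle\le\langle Y\rangle$, and since a subgroup of a cyclic group is cyclic, ``$\langle X\rangle$ trivial'' is downward closed and ``$\langle X\rangle$ noncyclic'' is upward closed, so inspecting the three value-ranges gives $\alpha(X)\le\alpha(Y)$. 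Thus the entire content of the lemma lies in the submodularity axiom (c2).

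The first real step is to reduce (c2) to a statement about the subgroup lattice. Write $A=\langle X\rangle$ and $B=\langle Y\rangle$. From $X\cup Y\subseteq\langle X\rangle\cup\langle Y\rangle$ we get $\langle X\cup Y\rangle=\langle A,B\rangle$, and from $X\cap Y\subseteq X$ and $X\cap Y\subseteq Y$ we get $\langle X\cap Y\rangle\le A\cap B$; hence $\alpha(X\cup Y)=\alpha(\langle A,B\rangle)$ by (c5), and $\alpha(X\cap Y)\le\alpha(A\cap B)$ by (c5) and (c3). Consequently the submodular inequality for arbitrary subsets $X,Y$ follows from --- and, by taking $X,Y$ themselves to be subgroups, is equivalent to --- the inequality
\[
\alpha(A)+\alpha(B)\ \ge\ \alpha(\langle A,B\rangle)+\alpha(A\cap B)\qquad\text{for all subgroups }A,B\le\Gamma .
\]

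Next I would run a short case analysis of this subgroup inequality on the possible values $\alpha(A),\alpha(B)\in\{0,2,3\}$, using repeatedly that a subgroup of a cyclic group is cyclic. If $A$ or $B$ is trivial both sides coincide. If both are nontrivial but $A\cap B=\{1_\Gamma\}$, the left side is $\ge4$ while the right side is $\le3$. If $A\cap B$ is nontrivial and noncyclic, then $A$, $B$, and $A\cap B$ are all noncyclic and the inequality reads $6\ge\alpha(\langle A,B\rangle)+3$, which is automatic. If $A\cap B$ is nontrivial cyclic and at least one of $A,B$ is noncyclic, the left side is $\ge5$ and the right side $\le5$. The one case that can fail is when $A$ and $B$ are both nontrivial cyclic and $A\cap B$ is nontrivial: there $\alpha(A\cap B)=2$ and the inequality becomes $4\ge\alpha(\langle A,B\rangle)+2$, which holds when $\langle A,B\rangle$ is cyclic and fails (right side $5$) otherwise. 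Hence $\alpha$ is polymatroidal if and only if
\[
(\star)\qquad\text{for all nontrivial cyclic }A,B\le\Gamma\text{ with }A\cap B\neq\{1_\Gamma\},\ \langle A,B\rangle\text{ is cyclic.}
\]

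Finally I would identify $(\star)$ with the stated condition on maximal cyclic subgroups. If $C_1,C_2$ are maximal cyclic subgroups both containing some $g\neq1_\Gamma$, then $g\in C_1\cap C_2\neq\{1_\Gamma\}$, so $(\star)$ makes $\langle C_1,C_2\rangle$ cyclic and maximality forces $C_1=\langle C_1,C_2\rangle=C_2$; so $(\star)$ yields uniqueness. Conversely, assume uniqueness and take nontrivial cyclic $A=\langle a\rangle$, $B=\langle b\rangle$ with $1_\Gamma\neq g\in A\cap B$; extend $A$ and $B$ to maximal cyclic subgroups $M\supseteq A$ and $M'\supseteq B$, both of which contain $g$, so $M=M'$ by uniqueness and therefore $\langle a,b\rangle\le M$ is cyclic, giving $(\star)$. (For infinite $\Gamma$ this extension step needs Zorn's lemma, or the condition should be rephrased as ``the poset of cyclic subgroups containing each nonidentity $g$ is directed''; for finite $\Gamma$ it is automatic.) I expect the only places needing care to be getting the inclusion $\langle X\cap Y\rangle\le\langle X\rangle\cap\langle Y\rangle$ and hence the direction of the reduction right, and cleanly extracting $(\star)$ from the value bookkeeping; everything else is routine.
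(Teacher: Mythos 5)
Your proof is correct and takes essentially the same route as the paper: the non-submodularity axioms are routine, the submodular inequality is reduced (by monotonicity) to the case where both generated subgroups are nontrivial cyclic, and there it is controlled exactly by whether $\langle A,B\rangle$ is cyclic, which is where uniqueness of maximal cyclic subgroups enters; the converse is the same ``two distinct maximal cyclic subgroups sharing a nonidentity element'' counterexample. Your passage to the subgroup-level inequality via (c3)/(c5) and the clean isolation of the intermediate condition $(\star)$ are nice organizational touches, and your caution about infinite $\Gamma$ (where the extension of a cyclic subgroup to a maximal one is not automatic) points to a genuine but minor subtlety that the paper also leaves implicit.
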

\begin{proof}
Note that $\alpha$ satisfies the monotonicity, the invariance under conjugation, and the invariance under taking the closure. We prove that $\alpha$ is submodular if and only if for each element $g\in \Gamma\setminus \{1_{\Gamma}\}$ 
a maximal cyclic subgroup containing $g$ is unique.

Suppose a maximal cyclic group containing each element is unique.
The submodularity can be checked as follows.
Take any $X, Y\subseteq \Gamma$.
If $\langle X\rangle$ or $\langle Y\rangle$ is not cyclic,
the submodular inequality is trivial.
If $\langle X\rangle$ and $\langle Y\rangle$ are nontrivial and cyclic,
there are unique maximal cyclic subgroups $\Gamma_X$ and $\Gamma_Y$ containing $X$ and $Y$, respectively. 
If $\Gamma_X\cap \Gamma_Y=\{1_{\Gamma}\}$, then 
$\alpha(X)+\alpha(Y)=4>3\geq \alpha(X\cap Y)+\alpha(X\cup Y)$.
If $\Gamma_X\cap \Gamma_Y\neq \{1_{\Gamma}\}$, then it is cyclic 
and there is a unique maximal cyclic subgroup containing $\Gamma_X\cap \Gamma_Y$.
However, since $\Gamma_X$ and $\Gamma_Y$ are maximal, 
we have $\Gamma_X=\Gamma_Y$, implying 
$\alpha(X)+\alpha(Y)=\alpha(\Gamma_X)+\alpha(\Gamma_Y)
\geq \alpha(X\cap Y)+\alpha(X\cup Y)$.

Conversely, if there is an element $g\in \Gamma$ that is  contained in two distinct maximal 
cyclic subgroups $\Gamma_1$ and $\Gamma_2$.
Then $\alpha(\Gamma_1\cap \Gamma_2)\geq \alpha(\{g\})\geq 2$
and $\alpha(\Gamma_1\cup \Gamma_2)=3$.
Hence the submodularity does not hold.	
\end{proof}

A dihedral group is an example satisfying this property while 
$\mathbb{Z}_3\times \mathbb{Z}_2\times \mathbb{Z}_3$ is an example 
not having the property.

It was shown in \cite{jkt} that the so-called symmetry-forced rigidity of 2-dimensional bar-joint frameworks with dihedral symmetry with order $2n$ for some odd $n$ can be characterized in terms of  this count condition (under a certain generic assumption).

\medskip
\noindent{\it Example 5}.
Let $n, i$ be positive integers with $i<n$, and let 
\begin{align*} 
S_0(n,i)&=\{n' \in \mathbb{Z}: 2\leq n' \leq n, n' \text{ divides } n \text{ and } i\} \\
S_{-1}(n,i)&=\{n' \in \mathbb{Z}: 2\leq n' \leq n, n' \text{ divides }n \text{ and } i-1\} \\
S_{1}(n,i)&=\{n' \in \mathbb{Z}: 2\leq n' \leq n, n' \text{ divides }n \text{ and } i+1\} 
\end{align*}
\begin{equation*}
S(n,i)=\begin{cases}
S_0(n,i) \cup S_{-1}(n,i) \cup S_1(n,i) & \text{ if $i$ is even}\\
S_0(n,i) \cup S_{-1}(n,i) \cup S_1(n,i)\setminus \{2\} & \text{ if $i$ is odd}.
\end{cases}
\end{equation*}

Suppose that we have a $\mathbb{Z}_n$-labeled graph $(G,\psi)$.
The following count condition appears when analyzing the infinitesimal rigidity of frameworks with cyclic symmetry:
\begin{equation*}
|F|\leq 2|V(F)|-3+\begin{cases}
0 & \text{ if $F$ is balanced} \\
1 & \text{ if  $i$ is odd and $\langle F\rangle_{v,\psi}\simeq \mathbb{Z}_2$  for some $v\in V(F)$} \\
2 & \text{ if $\langle F\rangle_{v,\psi}\simeq \mathbb{Z}_k$ for some $k\in S(n,i)$, or
$F$ is near-balanced} \\
3 & \text{ otherwise}.
\end{cases}
\end{equation*}
This count indeed determines a matroid since the corresponding $\alpha$ is polymatroidal as shown below.
\begin{lemma}
The function $\alpha : 2^{\mathbb{Z}_n} \to \mathbb{Z}$ defined by
\begin{equation*}
\alpha(X)=\begin{cases}
0 & \text{ if $\langle X\rangle$ is trivial} \\
1 & \text{ if  $i$ is odd and $\langle X\rangle\simeq \mathbb{Z}_2$} \\
2 & \text{ if $\langle X\rangle\simeq \mathbb{Z}_k$ for some $k\in S(n,i)$} \\
3 & \text{ otherwise }
\end{cases} \qquad (X\subseteq \mathbb{Z}_n).
\end{equation*}
is polymatroidal.
\end{lemma}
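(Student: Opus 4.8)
The plan is to verify the five defining conditions (c1)--(c5) for $\alpha$ on $2^{\mathbb{Z}_n}$, where the only substantive work is submodularity (c2); monotonicity (c3), the identity value (c1), conjugation invariance (c4, which is automatic since $\mathbb{Z}_n$ is abelian), and invariance under taking $\langle\cdot\rangle$ (c5) are immediate from the fact that $\alpha(X)$ depends only on the isomorphism type of $\langle X\rangle$, which for $\mathbb{Z}_n$ is just the integer $|\langle X\rangle|$. So the heart of the matter is: for all $X,Y\subseteq\mathbb{Z}_n$,
\[
\alpha(X)+\alpha(Y)\ \geq\ \alpha(X\cup Y)+\alpha(X\cap Y).
\]
Writing $A=\langle X\rangle$, $B=\langle Y\rangle$, and using that $\langle X\cup Y\rangle=A+B$ while $\langle X\cap Y\rangle\subseteq A\cap B$ (so by monotonicity it suffices to bound $\alpha(A)+\alpha(B)\geq\alpha(A+B)+\alpha(A\cap B)$), the inequality becomes a purely arithmetic statement about the subgroup lattice of $\mathbb{Z}_n$, which is the divisor lattice: with $a=|A|$, $b=|B|$ one has $|A+B|=\mathrm{lcm}(a,b)$ and $|A\cap B|=\gcd(a,b)$. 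So I must check, for all divisors $a,b$ of $n$,
\[
g(a)+g(b)\ \geq\ g(\mathrm{lcm}(a,b))+g(\gcd(a,b)),
\]
where $g(1)=0$; $g(2)=1$ if $i$ is odd and $g(2)=2$ otherwise; $g(k)=2$ if $k\in S(n,i)$; and $g(k)=3$ for all remaining $k$.

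The key case analysis I would run is according to the values of $g(a),g(b)$. If either side value is $0$, say $a=1$, the inequality reads $g(1)+g(b)\geq g(b)+g(1)$, trivially an equality, so assume $a,b\geq 2$. The right-hand side is at most $3+g(\gcd(a,b))$, and if $\gcd(a,b)\geq 2$ then $g(\gcd(a,b))\leq g(a)$ and $g(\gcd(a,b))\leq g(b)$ by monotonicity; combined with $g(\mathrm{lcm}(a,b))\leq 3\leq g(a)+g(b)-g(\gcd(a,b))$ whenever $\min\{g(a),g(b)\}\geq 1$ and $\gcd$-value plus $3$ is controlled, this handles the bulk of the cases. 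The genuinely delicate case is $\gcd(a,b)=1$ with $a,b\geq 2$: then the right side is $g(\mathrm{lcm}(a,b))+0\leq 3$, and I need $g(a)+g(b)\geq 3$. This can only fail if $g(a)=g(b)=1$, i.e. $i$ is odd and $a=b=2$ — but then $\gcd(a,b)=2\neq 1$, contradiction; or if one of them equals $1$, already excluded. The one remaining subtlety is the edge case $g(a)=1,g(b)=2$ with $\gcd(a,b)=1$, forcing $a=2$, $i$ odd, $b\in S(n,i)$: then $\mathrm{lcm}(a,b)=2b$, and I must confirm $g(2b)\leq 3$ always (true, $3$ is the max), so $1+2\geq 3+0$ holds with equality — this is exactly why $S(n,i)$ is \emph{not} closed under the relevant operations and why the definition of $S(n,i)$ removes $\{2\}$ when $i$ is odd: if $2\in S(n,i)$ with $i$ odd we would get a clash between the "$g(2)=1$" clause and the "$g(2)=2$" clause.

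The main obstacle I anticipate is precisely bookkeeping the interaction between the $S_{-1},S_0,S_1$ structure and the $\mathrm{lcm}/\gcd$ operation: one must verify that whenever $g(a)=g(b)=2$ (so $a\in S(n,i_a')$-type membership and likewise $b$) and $\gcd(a,b)=1$, the lcm gets value at most $3$ — automatic — but also that there is no configuration producing $g(a)=g(b)=2$ with $g(\mathrm{lcm}(a,b))=3$ and $g(\gcd(a,b))=2$ simultaneously, since $2+2 < 3+2$. Here $\gcd(a,b)\geq 2$ with $a,b\in$ ``value-$2$'' divisors: then $\gcd(a,b)$ divides both $a$ and $b$, and I claim $g(\gcd(a,b))=2$ forces (using the divisibility closure baked into the definitions of $S_0,S_{\pm1}$ — e.g. if $n'\mid a$ and $a\mid i$ then $n'\mid i$, so any divisor of an element of $S_0(n,i)$ lies in $S_0(n,i)\cup\{1\}$, and similarly for $S_{\pm1}$) that actually $g(\mathrm{lcm}(a,b))\leq 2$ as well, whence $2+2\geq 2+2$. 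Once this divisibility-closure property of each $S_\star(n,i)$ is isolated as a small sublemma and the parity shift for $S$ (removing $2$ when $i$ is odd) is reconciled with the separate ``$g(2)=1$'' clause, the remaining cases are a routine finite check, and submodularity — hence the lemma — follows.
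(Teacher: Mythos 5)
Your approach is essentially the paper's: reduce to subgroups of $\mathbb{Z}_n$, i.e.\ to divisors of $n$, and case-analyze the values $g(a),g(b)$ against $g(\gcd(a,b))$ and $g(\mathrm{lcm}(a,b))$; the paper does exactly this with $n_X,n_Y,g=\gcd$, $l=\mathrm{lcm}$. You also correctly isolate the crux: ruling out $g(a)=g(b)=g(\gcd(a,b))=2$ with $g(\mathrm{lcm}(a,b))=3$. But at that crux your argument has a gap. Your divisibility-closure sublemma (each $S_j(n,i)$ is downward closed among divisors $\geq 2$) only tells you that $d:=\gcd(a,b)$ lies in $S_{j_a}(n,i)\cap S_{j_b}(n,i)$ when $a\in S_{j_a}(n,i)$ and $b\in S_{j_b}(n,i)$; by itself this does not yield $\mathrm{lcm}(a,b)\in S(n,i)$. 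What you additionally need, and what the paper actually proves, is that $d\mid(j_a-j_b)$ with $|j_a-j_b|\leq 2$, so either $j_a=j_b$ --- in which case $\mathrm{lcm}(a,b)$ divides $i+j_a$ and $n$ and hence lies in $S(n,i)$ --- or $d=2$ and $\{j_a,j_b\}=\{-1,+1\}$. In the latter subcase, when $i$ is odd the fact that $2\notin S(n,i)$ contradicts $g(d)=2$; when $i$ is even, $i-1$ and $i+1$ are both odd, so $a$ and $b$ are odd and hence $\gcd(a,b)$ is odd, contradicting $d=2$. This parity argument is the substantive step of the proof, not a "routine finite check," and your sketch asserts the needed conclusion ($g(\mathrm{lcm}(a,b))\leq 2$) without supplying it.
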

\begin{proof}
Only the submodularity of $\alpha$ is nontrivial.
Take any $X, Y \subseteq \Gamma$.
Since $\alpha (\langle X \rangle \cap \langle Y \rangle) + \alpha (\langle X \rangle \cup \langle Y \rangle)
 \geq \alpha (X \cap Y) + \alpha (X \cup Y)$,
it suffices to consider the case when $X$ and $Y$ are subgroups of $\mathbb{Z}_n$.
Let $n_X$ and $n_Y$ be positive integers dividing $n$ such that $X \simeq \mathbb{Z}_{n_X}$ and $Y \simeq \mathbb{Z}_{n_Y}$,
and let $g = \mathrm{gcd} (n_X, n_Y)$ and $l = \mathrm{lcm} (n_X, n_Y)$.
Then we have $X \cap Y = \{ 0, \frac{n}{g}, \ldots, \frac{(g - 1)n}{g} \} \simeq \mathbb{Z}_g$ and
$\langle X \cup Y \rangle  
= \mathrm{gcd} (\frac{n}{n_X}, \frac{n}{n_Y}) \mathbb{Z} \slash n \mathbb{Z} = \frac{n}{l} \mathbb{Z} \slash n \mathbb{Z} \simeq \mathbb{Z}_l$,
implying $\alpha (X \cap Y) + \alpha (X \cup Y) \leq \alpha (X \cap Y) + \alpha (\langle X \cup Y \rangle) = \alpha (\mathbb{Z}_g) + \alpha (\mathbb{Z}_l)$.
Hence we need only to show that 
\begin{equation}
\label{eq:cyclic_sub}
\alpha (\mathbb{Z}_{n_X}) + \alpha (\mathbb{Z}_{n_Y}) \geq \alpha (\mathbb{Z}_g) + \alpha (\mathbb{Z}_l).
\end{equation}

Suppose that $i$ is odd.
If $n_X=1$, then $g=1$ and $l=n_Y$, implying (\ref{eq:cyclic_sub}).
Also, if $n_X\notin S(n,i)\cup \{1,2\}$, then $l\notin S(n,i)\cup \{1,2\}$ and hence 
$\alpha (\mathbb{Z}_{n_X})=\alpha (\mathbb{Z}_{l})=3$.
Since $\alpha (\mathbb{Z}_{n_Y})\geq \alpha (\mathbb{Z}_{g})$ always holds, 
we get (\ref{eq:cyclic_sub}).
Therefore, we may suppose that $n_X, n_Y\in S(n,i)\cup \{2\}$.

If $n_X=n_Y=2$, then $g=l=2$, and hence (\ref{eq:cyclic_sub}) follows.

If $n_X\in S(n,i)$ and $n_Y=2$, then $g\leq 2$. 
When $g=1$, $\alpha (\mathbb{Z}_{n_X}) + \alpha (\mathbb{Z}_{n_Y})=3\geq \alpha (\mathbb{Z}_l) =\alpha (\mathbb{Z}_g) + \alpha (\mathbb{Z}_l)$.
When $g=2$,  $l=n_X$ and $g=n_Y$ hold, and thus (\ref{eq:cyclic_sub}) holds.

Suppose finally that $n_X\in S(n,i)$ and $n_Y\in S(n,i)$.
If $g\notin S(n,i)$, then $\alpha (\mathbb{Z}_{n_X}) + \alpha (\mathbb{Z}_{n_Y})-\alpha (\mathbb{Z}_g)\geq 3\geq  \alpha (\mathbb{Z}_l)$.
On the other hand, if $g\in S(n,i)$, then $l\in S(n,i)$ holds, which implies (\ref{eq:cyclic_sub}). 
Indeed, if $n_X\in S_{j_X}(n,i)$ and $n_Y\in S_{j_Y}(n,i)$ for some $j_X, j_Y\in \{-1,0,1\}$,
then $j_X-j_Y$ is an integer multiple of $g$.
Since $g>2$ by $g\in S(n,i)$, this implies $j_X=j_Y$, and hence $l\in S(n,i)$ holds as we claimed.

Suppose that $i$ is even.
We can do the same case analysis as in the case of odd $i$, and the only nontrivial case is when $n_X, n_Y, g\in S(n,i)$.
We again show $l\in S(n,i)$.
Let $j_X$ and $j_Y$ be as above. 
Then $j_X-j_Y$ is an integer multiple of $g$.
Since $j_X=j_Y$ implies $l\in S(n,i)$, assume $j_X\neq j_Y$.
Since $g>1$, we have $g=2$ and $j_Xj_Y=-1$.
However, since $i$ is even, $i+j_X$ and $i+j_Y$ are both odd.
Since $n_X$ and $n_Y$ divid $i+j_X$ and $i+j_Y$, respectively, 
$g$ must be odd, contradicting $g=2$.
Therefore, $j_X=j_Y$ always holds, and $l\in S(n,i)$ implies (\ref{eq:cyclic_sub}).
\end{proof}

It was shown in \cite{ikeshita} that the infinitesimal rigidity of 2-dimensional bar-joint frameworks with cyclic symmetry of odd order $n$ can be characterized in terms of  these count conditions (under a certain generic assumption).

\section{Near-balancedness}
In this section we shall prepare notation and present several properties of near-balancedness.

Let $G=(V,E)$ be a connected graph. For $F\subseteq E(G)$ and  $v\in V(F)$ let $F_v$ be the set of edges in $F$ incident to $v$, and  let $G_F=(V(F), F)$. 
For $v\in V$, we denote by $L_v$  the set of loops in $G$ incident to $v$, 
and by $L_v^\circ$ the set of balanced loops incident to $v$.
For a vertex $v$, the subgraph of $G-L_v$ induced by $v$ and the vertex set of a connected component of $G-v$ is called a {\em fraction} of $v$. Note that if $v$ is not a cut vertex then $G-L_v$ is a fraction of $v$.

Let $(G,\psi)$ be a $\Gamma$-labeled graph.
For $v\in V(G)$ and $g\in \Gamma$, a {\em switching} at $v$ with $g$ is an operation that creates a new gain function $\psi'$ from  $\psi$ as follows:
\begin{equation*}
\psi'(e)=\begin{cases}
g\cdot \psi(e)\cdot g^{-1} & \text{if $e$ is a loop incident with $v$} \\
g\cdot \psi(e) & \text{if $e$ is a non-loop edge and is directed from $v$} \\
\psi(e)\cdot g^{-1} & \text{if $e$ is a non-loop edge and is directed to $v$}\\
\psi(e) & \text{otherwise}
\end{cases} \qquad (e\in E(G)).
\end{equation*}
A gain function $\psi$ is said to be {\em equivalent} to $\psi'$ if $\psi$ can be obtained from $\psi'$ by a sequence of switchings.
It is easy to see that $\langle F\rangle_{v, \psi}$ is conjugate to $\langle F\rangle_{v, \psi'}$  for any equivalent $\psi$ and $\psi'$. (See, e.g.,~\cite[Section 2.5.2]{gt}.)

For a forest $F\subseteq E(G)$, a gain function $\psi'$ is said to be {\em $F$-respecting} if $\psi'(e)=1_{\Gamma}$ for every $e\in F$.
For any forest $F\subseteq E(G)$, there always exists an $F$-respecting gain function equivalent to $\psi$. 
A frequently used fact in the subsequence discussion is that, if $\psi'$ is $T$-respecting for a spanning tree $T$ of $G_F$, then  $\langle F\rangle_{v,\psi'}=\langle \psi'(F)\rangle$ for any $v\in V(F)$, where $\psi'(F)=\{\psi'(e): e\in F\}$ (see, e.g.,~\cite[Section 2.2]{jkt}). Hence $\tilde{\alpha}(F)=\alpha(\psi'(F))$.

We say that a $\Gamma$-labeled graph $(G,\psi)$ is {\em near-balanced} if $E(G)$ is near-balanced.
The following proposition gives an alternative definition for near-balancedness.

\begin{proposition}
\label{prop:0}
Let $(G,\psi)$ be a connected and unbalanced $\Gamma$-labeled graph with $G=(V,E)$.
Then $(G,\psi)$ is near-balanced if and only if 
there are $v \in V$, $g \in \Gamma\setminus \{1_{\Gamma}\}$, $E_v'\subseteq E_v$,   and an equivalent gain function $\psi'$ such that, assuming that all edges incident to $v$ are directed to $v$,
\begin{itemize}
\item $\psi'(e)=1_{\Gamma}$ for $e\in E\setminus E_v'$, and
\item $\psi'(e)=g$ for $e\in E_v'$.
\end{itemize}
\end{proposition}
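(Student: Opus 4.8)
The plan is to unpack both definitions and show they describe the same configurations, working with a conveniently chosen equivalent gain function throughout.

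First I would prove the ``if'' direction, since it is the more mechanical one. Suppose we are given $v$, $g \neq 1_\Gamma$, $E_v' \subseteq E_v$, and an equivalent $F$-respecting-type gain $\psi'$ as in the statement, with all edges incident to $v$ directed toward $v$. Consider the split of $(G,\psi')$ at $v$ that separates $E_v'$ from $E_v \setminus E_v'$ (handling loops at $v$ as prescribed in the definition of split: note that since $\psi'(e) = 1_\Gamma$ on $E \setminus E_v'$, every loop at $v$ outside $E_v'$ is balanced and stays at $v_1$; a loop in $E_v'$ has label $g$, hence is unbalanced, and becomes an arc from $v_1$ to $v_2$). I would argue that in the resulting graph every edge carries label $1_\Gamma$ after the split — the edges of $E_v'$, which pointed to $v$ with label $g$, now point to the new vertex $v_2$, and a single switching at $v_2$ with $g^{-1}$ (or the appropriate sign) makes all their labels trivial without disturbing anything else, because $v_2$ is incident to no other edges. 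Hence the split graph is balanced, so $E(G)$ is near-balanced; one should also observe $E(G)$ itself is not balanced because some closed walk through an edge of $E_v'$ has gain $g \neq 1_\Gamma$.

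For the ``only if'' direction, suppose $(G,\psi)$ is near-balanced: it is unbalanced, but some split $(G',\psi')$ at a vertex $v$ (splitting $v$ into $v_1, v_2$ with edge partition $\{E_1, E_2\}$) is balanced. Since $G'$ is connected (it must be, since $E(G)$ is connected and balanced in $G'$) and balanced, I can pass to an equivalent gain on $G'$ that is identically $1_\Gamma$; pull this back to a gain $\psi''$ on $G$ equivalent to $\psi$. Now every non-loop edge not incident to $v$ has label $1_\Gamma$, every edge of $G'$ within one side has label $1_\Gamma$, and the only edges that can carry a nontrivial label in $G$ are those incident to $v$ — specifically, the edges of one of the two parts, say $E_2$ (after orienting all edges at $v$ toward $v$), together with the former unbalanced loops. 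But here is the subtle point: a priori these edges in $E_2$ could carry different group elements, whereas the proposition demands they all carry a single common $g$. I would resolve this by recalling that in $G'$ each such edge connected to $v_2$ had label $1_\Gamma$; the discrepancy in $G$ arises solely from the difference in switching at $v$ versus at $v_1$ and $v_2$ separately, which differ by a single group element; this forces all of $E_2$ (oriented toward $v$) to share the same label $g$, and the unbalanced loops at $v$, seen as $v_1$-$v_2$ arcs in $G'$ with trivial label, become loops with label $g$ (up to the free inversion allowed for loops) in $G$. Setting $E_v' = E_2$ gives the claimed form, with $g \neq 1_\Gamma$ because $(G,\psi)$ is unbalanced.

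The main obstacle I anticipate is the bookkeeping in this last step: carefully tracking how switchings on the split graph $G'$ correspond to switchings on $G$, in particular why the single switching parameter used to trivialize the $v_2$-side of $G'$ forces a \emph{common} label $g$ on all of $E_v'$ rather than allowing distinct labels, and handling the loop cases (balanced loops staying put, unbalanced loops turning into $v_1$-$v_2$ arcs and the freedom to invert their labels) consistently with the orientation convention ``all edges at $v$ directed to $v$.'' Once the correspondence between switchings on $G$ and on $G'$ is set up cleanly, both implications follow; I would state that correspondence as a small preliminary observation and then the two directions are short.
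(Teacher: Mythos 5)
Your proof is correct and takes essentially the same approach as the paper: split at $v$, use switchings to trivialize the labels, and observe that the remaining non-trivial labels must all equal a common non-identity element $g$ because the switchings at the two split vertices $v_1,v_2$ differ by a single group element. The paper implements this by fixing a spanning tree $T$ of $G$ with $T\setminus E_2$ a maximal forest of $G-E_2$ and arguing via closed walks in the split graph, whereas you trivialize the split graph first and track the pull-back discrepancy; this is the same computation in different clothing (your construction yields $E_v'=E_2\cup(L_v\setminus L_v^\circ)$, the paper's tree choice makes the edges of $E_2$ in fractions meeting only $E_2$ trivial as well, giving a possibly smaller $E_v'$ — both are valid witnesses).
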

\begin{proof}
Suppose that the split $(H,\psi)$ of $(G,\psi)$ at $v\in V$ with a partition $\{E_1, E_2\}$ of $E_v\setminus L_v$ results in a balanced graph. Let $v_1$ and $v_2$ be the new vertices after the split. If $H$ is disconnected, then $G$ can be obtained from $H$ by identifying $v_1$ and $v_2$,
and hence $(G,\psi)$ turns out to be balanced, which is a contradiction.
Hence $H$ is connected.

Take a spanning tree $T$ of $G$ such that $T\setminus E_2$ is a maximal forest of $G-E_2$, and consider a $T$-respecting equivalent gain function $\psi'$.
Note that $(H,\psi')$ is still balanced.
Let ${\cal G}_1$ be the family of  fractions $G'$ of $v$ in $(G, \psi')$ with $E_1\cap E(G')\neq \emptyset$, and let $E_2'=\{e\in E_2\cap E(G') : G' \in {\cal G}_1\}$.  
We show that $\psi'$ satisfies the property of the statement for 
$E_v':=E_2'\cup(L_v\setminus L_v^{\circ})$.

The first condition of the statement can be checked as follows.
Since $T$ spans $V(H)-v_2$ in $H$ and $(H,\psi')$ is balanced, $\psi'(e)=1_{\Gamma}$ holds for every $e\in E\setminus (E_2\cup (L_v\setminus L_v^{\circ}))$.
Also, for every $e\in E_2\setminus E_2'$, 
the fraction $G'$ of $v$ in $(G,\psi')$ containing $e$ satisfies  $E_1\cap E(G')=\emptyset$ by $e\notin E_2'$.
Hence $(E_2\cap E(G'))\cap T\neq \emptyset$ should hold as $T$ is spanning. 
Since $\psi'$ is $T$-respecting and $(H,\psi')$ is balanced,  
we have $\psi'(e)=1_{\Gamma}$ for $e\in E_2\setminus E_2'$.
Thus $\psi'(e)=1_{\Gamma}$ holds for every $e\in E\setminus E_v'$.

To see the second condition, we pick any $e\in E_v'$ and let $g=\psi'(e)$.
Now, observe that for each $f\in E_v'\setminus \{e\} (=(E_2'\cup (L_v\setminus L_v^\circ))\setminus \{e\})$,
$H$ contains a closed walk starting at $v_2$ and consisting of $e, f$ and  edges in $T$.
See Figure~\ref{fig:prop0}.
This implies $\psi'(e)^{-1}\psi'(f)=1_{\Gamma}$, meaning $\psi'(f)=\psi'(e)=g$.
Thus $\psi'$ is a required equivalent gain function.

\begin{figure}
\centering
\begin{minipage}{0.4\textwidth}
\centering
\includegraphics[scale=0.8]{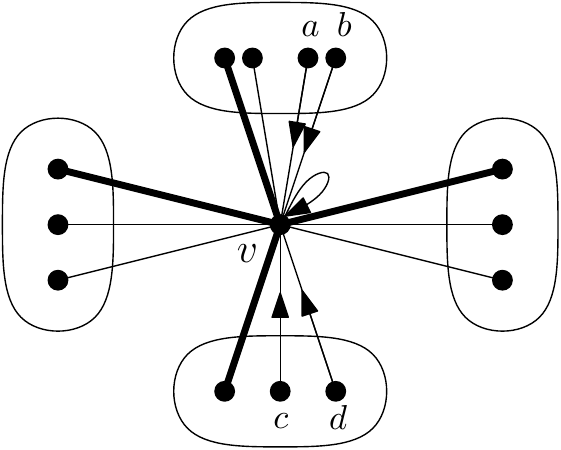}
\par (a)
\end{minipage}
\begin{minipage}{0.4\textwidth}
\centering
\includegraphics[scale=0.8]{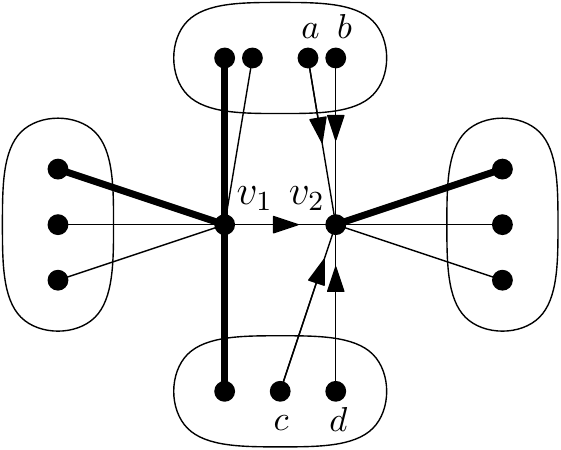}
\par(b) 
\end{minipage}
\caption{The proof of Proposition~\ref{prop:0}. (a) $(G,\psi')$ and (b) its split $(H, \psi')$ at $v$. 
Every unoriented edge has the identity label and $E_2'=\{va,vb,vc,vd\}$. 
The bold edges represent edges in $T$. 
Note that the fraction of $v$ on the right side of $v$ does not belong to ${\cal G}_1$. }
\label{fig:prop0}
\end{figure}
Conversely, if  there are $v\in V$, $g\in \Gamma\setminus \{1_{\Gamma}\}$, $E_v'\subseteq E_v$, and an equivalent gain function $\psi'$ satisfying the statement,
then we let  $E_1=E_v\setminus (E_v'\cup L_v)$ and $E_2=E_v'\setminus L_v$.
We consider the split of  $(G,\psi')$ at $v$ with the partition $\{E_1, E_2\}$ of $E_v\setminus L_v$.
Then the resulting graph is balanced.
\end{proof}

Suppose that $(G,\psi)$ is  near-balanced.
Then there is a balanced split of $(G,\psi)$ at $v\in V(G)$ with a partition $\{E_1, E_2\}$ of $E_v\setminus L_v$.
This $v$ is called a {\em base} for the  near-balancedness and 
$E_2\cup (L_v\setminus L_v^\circ)$  (or $E_1\cup (L_v\setminus L_v^{\circ}$)) is called an {\em extra edge set}.

The proof of Proposition~\ref{prop:0} also implies the following useful fact.
\begin{proposition}
\label{prop:1}
Let $(G,\psi)$ be a connected near-balanced graph and 
let $E'$ be an extra edge set for the near-balancedness.
Suppose that $\psi$ is $T$-respecting for some spanning tree $T\subseteq E$ with $T\cap E'=\emptyset$.
Then $\psi$ satisfies the following.
\begin{itemize}
\item There is a  nonidentity element $g\in \Gamma$ such that $\psi(e)=g$ for every $e\in E'$.
\item $\psi(e)=1_{\Gamma}$ for $e\in E\setminus E'$. 
\end{itemize}
\end{proposition}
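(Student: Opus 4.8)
The plan is to extract the conclusion directly from the proof of Proposition~\ref{prop:0}, with only a small twist to handle the hypothesis that $\psi$ is already $T$-respecting for a \emph{prescribed} spanning tree $T$ avoiding $E'$. Recall that, by definition, an extra edge set $E'$ arises as $E_2 \cup (L_v \setminus L_v^\circ)$ (or the symmetric version with $E_1$) for some base vertex $v$ and some balanced split of $(G,\psi)$ at $v$ with partition $\{E_1, E_2\}$ of $E_v \setminus L_v$. So first I would fix such a $v$, $E_1$, $E_2$; let $(H,\psi)$ denote the corresponding balanced split, with new vertices $v_1, v_2$; and note (as in the first paragraph of the proof of Proposition~\ref{prop:0}) that $H$ is connected, since otherwise $(G,\psi)$ would be balanced.

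Next I would compare the given $T$ with the spanning tree used in the proof of Proposition~\ref{prop:0}. The key observation is that $T$, viewed inside $H$, is a forest avoiding every edge of $E_2$ (since $T \cap E' = \emptyset$ and $E_2 \subseteq E'$), hence avoiding the cut between $v_1$ and $v_2$; because $T$ is spanning in $G$ and the only vertices identified in passing from $H$ to $G$ are $v_1,v_2$, the tree $T$ spans all of $V(H)$ except possibly $v_2$, i.e.\ $T$ is a spanning tree of $H - v_2$ (equivalently, $T$ together with one more edge across the cut spans $H$). This is exactly the structural property of the tree chosen in the proof of Proposition~\ref{prop:0} (there it is arranged that $T \setminus E_2$ is a maximal forest of $G - E_2$). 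Since $\psi$ is already $T$-respecting, I can now simply re-run the two displayed arguments of that proof verbatim with $\psi$ in place of $\psi'$: because $T$ spans $V(H) - v_2$ in $H$ and $(H,\psi)$ is balanced, every edge of $E \setminus (E_2 \cup (L_v \setminus L_v^\circ))$ has gain $1_\Gamma$; and for the edges of $E' = E_2' \cup (L_v \setminus L_v^\circ)$, each pair $e, f \in E'$ lies on a closed walk at $v_2$ using only $e$, $f$, and edges of $T$, so $\psi(e) = \psi(f)$, giving a common value $g$, which is nonidentity because $(G,\psi)$ is unbalanced. One subtlety to address: in Proposition~\ref{prop:0}'s proof, $E_2'$ is a certain subset of $E_2$ determined by the fractions of $v$, and the extra edge set is $E_2 \cup (L_v \setminus L_v^\circ)$; I should check that with a $T$-respecting $\psi$ avoiding $E_2$ the fraction analysis forces $E_2' = E_2$ (any $e \in E_2 \setminus E_2'$ lies in a fraction whose intersection with $T$ is nonempty and lies in $E_2$, contradicting $T \cap E_2 = \emptyset$), so the two notions of extra edge set coincide here.

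Concretely, the write-up would be: ``By definition there is a base $v$ and a partition $\{E_1,E_2\}$ with $E' = E_2 \cup (L_v \setminus L_v^\circ)$ (up to swapping $E_1,E_2$) such that the split $(H,\psi)$ is balanced; $H$ is connected as in the proof of Proposition~\ref{prop:0}. Since $T \cap E' = \emptyset$, the forest $T$ avoids the $v_1$–$v_2$ cut of $H$, hence $T$ is a spanning tree of $H - v_2$. Now the two paragraphs of the proof of Proposition~\ref{prop:0} establishing the two bulleted conditions apply word for word with $\psi$ in the role of $\psi'$, yielding the claim.'' I expect the only genuine obstacle — really just a point requiring care rather than a difficulty — is to confirm that the prescribed tree $T$ (arbitrary subject to $T \cap E' = \emptyset$) can play the role of the specially chosen tree in Proposition~\ref{prop:0}; the argument above shows it can, because the single property actually used there is ``$T$ spans $V(H) - v_2$ in $H$,'' and that follows from $T \cap E' = \emptyset$ together with $T$ being a spanning tree of $G$. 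Everything else is a direct quotation of the earlier proof, so no new calculation is needed.
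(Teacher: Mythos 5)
Your proposal is correct and matches what the paper intends: the paper's "proof" of Proposition~\ref{prop:1} is literally the remark that the proof of Proposition~\ref{prop:0} implies it, and your elaboration — checking that the prescribed $T$ with $T\cap E'=\emptyset$ automatically satisfies the property used there (namely that $T$ is a spanning tree of $H-v_2$, equivalently a maximal forest of $G-E_2$), and that the hypothesis forces $E_2'=E_2$ so the extra edge set from Proposition~\ref{prop:0}'s construction coincides with the given $E'$ — is exactly the content of that implication. One cosmetic note: when you argue $E_2'=E_2$, the set that is "nonempty and lies in $E_2$" is $T\cap E_v\cap E(G')$ for the offending fraction $G'$, not the full intersection of $T$ with the fraction; the conclusion is unchanged.
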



\section{Main Theorem} \label{sec:proof}
Let $k$ and $\ell$ be positive integers with $\ell\leq 2k-1$.
Our main theorem given below is described under the following smoothness condition on a normalized polymatroidal function $\alpha:\Gamma\rightarrow \{0,1,\dots, \ell\}$:
 for any $\emptyset\neq S\subseteq \Gamma$ and $g\in \Gamma$, 
\begin{equation}
\label{eq:assumption}
\alpha(S\cup \{g\})-\alpha(S)>k \quad \Rightarrow \quad S=\{1_\Gamma\} \text{ and } g^2\neq 1_{\Gamma}.
\end{equation}
Since $\alpha$ is normalized,  we have $\alpha(\{g\})>0$ for any non-identity $g\in \Gamma$. Hence, if $\ell\leq k+1$, then (\ref{eq:assumption}) is equivalent to
\begin{equation}
\label{eq:assumption2}
\alpha(\Gamma')\leq k \quad \text{for any subgroup $\Gamma'\subseteq \Gamma$ isomorphic to $\mathbb{Z}_2$.} 
\end{equation}
Now we are ready to state our main theorem.
\begin{theorem}
\label{thm:main1}
Let $k, \ell$ be integers with $k\geq 1$ and $0\leq \ell\leq 2k-1$, $(G,\psi)$ be a $\Gamma$-labeled graph, and $\alpha:2^{\Gamma}\rightarrow \{0,1,\dots, \ell\}$ be a normalized polymatroidal function satisfying the smoothness condition (\ref{eq:assumption}),
and define $f_{\alpha}:{\cal C}(G)\rightarrow \mathbb{Z}$ by
\[
f_{\alpha}(F)=k|V(F)|-\ell+
\begin{cases}
\min\{\tilde{\alpha}(F), k\} & (\text{if $F$ is near-balanced}) \\
\tilde{\alpha}(F) & (\text{otherwise}).
\end{cases}
\]
Then the set ${\cal I}_{\alpha}(G)=\{I\subseteq E(G)\mid |F|\leq f_{\alpha}(F)\ \forall F\in {\cal C}(G)\cap 2^I\}$
forms the family of independent sets in a matroid.
\end{theorem}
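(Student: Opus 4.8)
The plan is to verify the standard exchange/augmentation axiom for the family ${\cal I}_{\alpha}(G)$, following the template of the proof of Theorem~\ref{thm:tan} in \cite{t} but with the extra bookkeeping forced by near-balancedness. Since $f_{\alpha}$ is defined only on connected edge sets, I first extend it to all nonempty edge sets by $\hat f_{\alpha}(F)=\sum_{i} f_{\alpha}(F_i)$ over the connected components $F_i$ of $F$; the membership condition for ${\cal I}_{\alpha}(G)$ is unchanged if one replaces ${\cal C}(G)\cap 2^I$ by all nonempty subsets of $I$, because a violation on a disconnected set forces a violation on one of its components. The heart of the matter is then to show that $\hat f_{\alpha}$ is \emph{monotone} and \emph{submodular} on the subsets of $E$ that actually occur as "tight" or "relevant" sets, which is the classical route (\`a la Edmonds) from a submodular function to a matroid whose independent sets are the $I$ with $|F|\le \hat f_{\alpha}(F)$ for all $F\subseteq I$.

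The key steps, in order, are as follows. (1) Reduce to normalized $\alpha$ and, via switchings, to $T$-respecting gain functions so that $\tilde\alpha(F)=\alpha(\psi(F))$ on any connected $F$ with a spanning tree inside a fixed global spanning tree; this lets one compute $\tilde\alpha$ combinatorially. (2) Establish the structural lemmas about near-balancedness: that near-balancedness of a connected set is detected by a single base vertex $v$ and an extra edge set $E'$ with all of $E'$ carrying one common nonidentity label and $E\setminus E'$ trivial (Propositions~\ref{prop:0} and \ref{prop:1}), and — crucially — that near-balancedness is "closed downward in the relevant sense": if $F\subseteq F'$ are connected, $F'$ near-balanced and $F$ unbalanced, then $F$ is near-balanced with a compatible base/extra set. (3) Prove the submodular-type inequality for $f_{\alpha}$ on two connected sets $F_1,F_2$ with $F_1\cup F_2$ connected, splitting into cases according to which of $F_1,F_2,F_1\cap F_2,F_1\cup F_2$ are balanced, near-balanced, or neither; here the value $|V(F_1)|+|V(F_2)|=|V(F_1\cup F_2)|+|V(F_1\cap F_2)|$ handles the $k|V(\cdot)|$ part for free, submodularity (c2) of $\alpha$ handles the $\tilde\alpha$ terms (using (c4),(c5) to pass to class functions), and the $\min\{\cdot,k\}$ truncation is itself submodular-preserving. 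The smoothness condition (\ref{eq:assumption}) is exactly what is needed to control the one dangerous configuration — when a near-balanced set is enlarged by one edge to a non-near-balanced set with $\tilde\alpha$ jumping by more than $k$ — which (\ref{eq:assumption}) forbids unless the smaller set is balanced and the new element has order $>2$, a case that can then be argued directly. (4) With monotonicity and the (restricted) submodular inequality in hand, run the standard argument: given $I,J\in{\cal I}_{\alpha}(G)$ with $|I|<|J|$, consider a maximal tight set and use submodularity to find an edge of $J\setminus I$ addable to $I$; equivalently, show directly that the maximal members of ${\cal I}_{\alpha}(G)$ contained in any fixed edge set $E_0$ all have the same cardinality, by a uncrossing argument on tight sets.

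The main obstacle I expect is step (3) in the cases involving near-balancedness, specifically showing the inequality
\[
f_{\alpha}(F_1)+f_{\alpha}(F_2)\ \ge\ f_{\alpha}(F_1\cup F_2)+\hat f_{\alpha}(F_1\cap F_2)
\]
when $F_1\cup F_2$ is near-balanced but $F_1$ or $F_2$ (or $F_1\cap F_2$) is balanced or non-near-balanced. The difficulty is twofold: first, near-balancedness is a genuinely global, non-monotone property (Example~1 shows that adding an edge can destroy it, and whether it does depends on the order of a group element), so one cannot simply quote (c1)--(c5); one must use the $T$-respecting normal form of Proposition~\ref{prop:1} to pin down the labels and then argue that the base vertices of $F_1$ and $F_2$ can be taken compatibly, or that $F_1\cap F_2$ is disconnected with controlled components. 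Second, the truncation $\min\{\tilde\alpha(F),k\}$ interacts with the $-\ell$ term: when $\ell>k$ one has less slack than in Theorem~\ref{thm:tan} (where $\ell=k$), and it is precisely here that the hypothesis $\ell\le 2k-1$ and the bound $\alpha(\mathbb{Z}_2)\le k$ are consumed — Example~3 shows the statement is false without them. I would isolate a single clean lemma, "compatibility of bases", stating that if $F_1\cup F_2$ is near-balanced with base $v$ and extra set $E'$, then one may choose $v\in V(F_1\cap F_2)$ and $E'$ meeting $F_1,F_2$ coherently; granting that lemma, the remaining case checks become mechanical counting with (c2) and the truncation identity $\min\{a,k\}+\min\{b,k\}\ge\min\{a+b-c,k\}+\min\{c,k\}$ for $0\le c\le a,b$, together with the smoothness condition for the one exceptional jump.
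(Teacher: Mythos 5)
Your overall plan is close in spirit to the paper's proof, but it has two gaps that are not cosmetic, and the opening framing would lead you down a dead end.

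First, the paper explicitly notes (after the statement of Theorem~\ref{thm:main1}) that for $\ell>k$ the extension $\hat f_{\alpha}$ you define in your first paragraph is \emph{not} submodular in general, so the Edmonds route for Theorem~\ref{thm:tan} is unavailable. You hedge by restricting submodularity to ``relevant'' sets, but you never say which sets these are; the paper's Lemma~\ref{lem:4} pins this down: the hidden submodularity of $\beta$ is proved only under the hypotheses that $X,Y$ are $f_{\alpha}$-full, that the subgraph $(V(X)\cap V(Y),X\cap Y)$ is connected, that $X\cap Y$ is $f_{\alpha}$-sparse, and that $|X\cap Y|$ exceeds the threshold $k|V(X\cap Y)|-2\ell+\min\{2k,\beta(X)+\beta(Y)\}$. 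Each of these restrictions is used in the proof (for instance the cardinality bound is what forces connectivity of $G_{X\cap Y}-X_v'$ and, in Case~3, $f_{\alpha}$-fullness of $X\cap Y$ so that Lemmas~\ref{lem:tight1} and~\ref{lem:tight2} apply). Your ``compatibility of bases'' lemma is essentially Lemmas~\ref{lem:tight1} and~\ref{lem:tight2}, but those are statements about one $f_{\alpha}$-full near-balanced graph, and invoking them for $X\cap Y$ requires exactly the fullness/cardinality hypotheses above; without them the uniqueness of the base can fail.

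Second, and more importantly, your plan overlooks the genuine obstruction that arises when uncrossing two $f_{\alpha}$-tight sets $X,Y$: the graph $G_1=(V(X)\cap V(Y),X\cap Y)$ can contain an \emph{isolated} vertex $v$, i.e.\ a vertex lying in both $V(X)$ and $V(Y)$ but incident to no edge of $X\cap Y$. In that case the identity $k|V(X)|+k|V(Y)|=k|V(X\cup Y)|+k|V(X\cap Y)|$ that you say ``handles the $k|V(\cdot)|$ part for free'' is off by $k$ per isolated vertex (the $kc_0$ term in the paper's display~(\ref{eq:tight1})), and plain submodularity of $\beta$ is no longer sufficient; one needs the relaxed inequality $\beta(X)+\beta(Y)\ge\beta(X\cup Y)+\beta(X\cap Y)-k$. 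The paper establishes this (Case~2 of Lemma~\ref{lem:main1}) by splitting $G_{X\cup Y}$ at the isolated vertex into $v_X,v_Y$ so that Lemma~\ref{lem:4} applies in the split graph, and then bounding the increase of $\beta(X\cup Y)$ upon reidentifying $v_X$ and $v_Y$; the smoothness condition~(\ref{eq:assumption}) is invoked precisely there to cap that increase by $k$, together with a careful analysis of $\langle X\cup Y\rangle_{u,\psi'}$ before and after the split. This split-and-reidentify step is the heart of the proof and it is not a consequence of the algebraic truncation identity you cite. Until you account for the $c_0\ge1$ case, your uncrossing argument in step~(4) will not go through, because the inequality you need from step~(3) is simply false at an isolated common vertex.

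Finally, once these two lemmas are in place, the paper does not stop at an exchange argument; it constructs an explicit valid partition $\{E_0,E_1,\dots,E_t\}$ of $E$ achieving $|F|=\mathrm{val}({\cal P})$, using Lemma~\ref{lem:main1} to make the maximal tight sets disjoint and Lemma~\ref{lem:main2} to absorb the edges of $E\setminus F$ whose $X_e$ is connected, with a separate treatment of bridge edges. This yields the min-formula for the rank as a byproduct. Your step~(4) as sketched would at best prove the equicardinality of bases; you would still need an analogue of Lemma~\ref{lem:main2} (that adding a ``blocked'' edge to a larger full set keeps it full) to make the induction close.
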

The case when  $\ell\leq k+1$ implies Theorem~\ref{thm:main}  due to the equivalence between (\ref{eq:assumption}) and (\ref{eq:assumption2}).

Before moving to the proof, we give a remark on the technical difference between Theorem~\ref{thm:main1} and the previous work. 
In \cite{t} the second author proved Theorem~\ref{thm:tan} (corresponding to the case for $\ell=k$) by showing that a set function $\hat{f}_{\alpha}:2^E\rightarrow \mathbb{R}$ defined by 
\[
\hat{f}_{\alpha}(F)=\sum_{C: \text{ connected component of } F} f_{\alpha}(C) \qquad (F\subseteq E)
\]
is monotone submodular. 
Then the theorem immediately  follows from Edmonds' theorem~\cite{e70} on intersecting submodular functions. However, for $\ell>k$, $\hat{f}_{\alpha}$ may not be submodular in general and we do not know whether our main theorem (Theorem~\ref{thm:main1}) is a consequence of a general theory of intersecting submodular functions.
In \cite{jkt} a special case (given in Example 4) was proved by directly checking the independence axiom, and here we will follow the same approach. 

The main observation in the proof is Lemma~\ref{lem:4}, which asserts the submodular relation among sets that intersect "nicely". To prove this, we further investigate properties of near-balanced graphs in Subsection~\ref{subsec:further}, and then we move to a proof of Theorem~\ref{thm:main1} in Subsection~\ref{subsec:proof}.

For simplicity of description, denote $\beta:{\cal C}(G)\rightarrow \mathbb{Z}$ by
\[
\beta(F)=\begin{cases}
\min\{\tilde{\alpha}(F), k\} & (\text{if $F$ is near-balaced}) \\ 
\tilde{\alpha}(F) & (\text{otherwise})
\end{cases} \qquad (F\in {\cal C}(G)).
\]
We say that $(G,\psi)$ is {\em $f_{\alpha}$-sparse} if 
$|F|\leq f_{\alpha}(F)$ holds for every $F\in {\cal C}(G)$.
A $\Gamma$-labeled graph $(G,\psi)$ is called  {\em $f_{\alpha}$-tight} 
if it is connected $f_{\alpha}$-sparse with $|E(G)|=f_{\alpha}(E(G))$.
Also  $(G,\psi)$ is called {\em $f_{\alpha}$-full} if it contains a connected $f_{\alpha}$-sparse subgraph $G'$ such that 
\begin{itemize}
\item $G'$ is spanning, i.e., $V(G')=V(G)$,
\item  $\beta(E(G'))=\beta(E(G))$, and
\item $|E(G')|\geq k|V(G')|-\ell+\min\{\beta(E(G')), 2k-\ell+1\}$.
\end{itemize}
Note that any $f_{\alpha}$-tight graph is $f_{\alpha}$-full.
An edge set $F$ is called $f_{\alpha}$-sparse, $f_{\alpha}$-tight, and $f_{\alpha}$-full, respectively, if 
so is the induced subgraph $G_F$.

\subsection{Further properties of near-balancedness}
\label{subsec:further}
Assuming $f_{\alpha}$-fullness, near-balanced graphs have further nice properties.
In the subsequent discussion, $\alpha$ always denotes a normalized polymatroidal function.


\begin{lemma}
\label{lem:tight1}
Suppose that $(G,\psi)$ is near-balanced and $f_{\alpha}$-full with $\beta(E(G))\geq 2k-\ell+1$. 
Then a base for the near-balancedness is unique.
\end{lemma}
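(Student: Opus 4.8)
The plan is to argue by contradiction: suppose $(G,\psi)$ has two distinct bases $u$ and $v$ for the near-balancedness, and derive that $\beta(E(G))\le 2k-\ell$, contradicting the hypothesis. By Proposition~\ref{prop:1} (after switching so that $\psi$ is $T$-respecting for a suitable spanning tree avoiding an extra edge set), the near-balancedness witnessed at $u$ means there is a nonidentity $g_u\in\Gamma$ and an extra edge set $E_u'$ with $\psi(e)=g_u$ on $E_u'$ and $\psi(e)=1_\Gamma$ elsewhere; similarly for $v$ with $g_v$ and $E_v'$. The first step is to exploit that \emph{one} gain function cannot simultaneously take the two forms unless the two extra edge sets and the two base vertices interact in a very restricted way; a single switching sequence relates the two canonical forms, and tracking how switchings at vertices other than $u,v$ act on edges incident to $u$ and to $v$ forces, after a short case analysis, that $g_v=g_u^{\pm1}$ and that $E_u'$ and $E_v'$ are nested or disjoint in a controlled fashion (this is exactly the kind of bookkeeping already carried out inside the proof of Proposition~\ref{prop:0}).

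\medskip

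\textbf{Key steps.} First I would fix a spanning tree $T$ of $G$ and a $T$-respecting $\psi$ so that, by Proposition~\ref{prop:1}, the near-balancedness based at $u$ is presented with extra edge set $E_u'$ all labeled by a single nonidentity $g_u$ and all other edges labeled $1_\Gamma$. Second, using the base $v$, apply Proposition~\ref{prop:0} to obtain $v\in V$, a nonidentity $h\in\Gamma$, $E_v'\subseteq E_v$, and an equivalent $\psi''$ with $\psi''(e)=h$ on $E_v'$ and $1_\Gamma$ off it; since $\psi$ and $\psi''$ are equivalent, relate them by a switching function $\gamma:V\to\Gamma$ and read off, edge by edge, the constraints $\gamma$ must satisfy. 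Third, I would show these constraints pin down $\tilde\alpha(E(G))=\alpha(\langle\psi(E(G))\rangle)=\alpha(\langle g_u\rangle)$ on one hand, and also $=\alpha(\langle h\rangle)$, so $\langle g_u\rangle$ and $\langle h\rangle$ are conjugate; and moreover that $\langle E(G)\rangle_{w,\psi}$ is itself (conjugate to) this cyclic group for \emph{every} $w$ — i.e. $\beta(E(G))$, which since $E(G)$ is near-balanced equals $\min\{\tilde\alpha(E(G)),k\}$, is tied to the structure of a single cyclic subgroup. Fourth, I would combine the two distinct bases to produce a spanning subgraph with a strictly larger edge count than allowed unless $\tilde\alpha(E(G))\le 2k-\ell$; concretely, having two independent "split points" means $G$ contains two edge-disjoint near-balanced fractions each of which, by $f_\alpha$-fullness, can be taken close to its $f_\alpha$-bound, and gluing them at the overlap where both $u$ and $v$ lie forces $|E(G)|$ to exceed $k|V(G)|-\ell+(2k-\ell+1)$ unless $\beta(E(G))\le 2k-\ell$ — contradicting $\beta(E(G))\ge 2k-\ell+1$.

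\medskip

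\textbf{Main obstacle.} The delicate part is the third and fourth steps: translating "two distinct bases" into a clean statement about the gain group and then into an edge-counting contradiction. The subtlety is that a base is defined via \emph{existence} of a balanced split, so two bases $u\ne v$ come with possibly unrelated partitions and extra edge sets, and one must show these cannot coexist unless $G$ is "small" in the $f_\alpha$ sense. I expect the argument to hinge on the following dichotomy: either the two extra edge sets $E_u'$ and $E_v'$ are "aligned" (one is contained in a fraction of the other's base), in which case a single split does double duty and one of $u,v$ was not really needed — but then one shows $\beta(E(G))<2k-\ell+1$ directly by counting within the common fraction structure using $f_\alpha$-fullness; or they are "crossing", in which case $G-u-v$ together with the four bundles of edges at $u$ and $v$ can be reassembled so that the total edge count, computed via the $f_\alpha$-full subgraph $G'$, beats $k|V|-\ell+\min\{\beta,2k-\ell+1\}$, the contradiction. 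Getting the inequality in the crossing case tight will require carefully charging the $-\ell$ term and the lifting term to the right pieces, and this is where the hypothesis $\ell\le 2k-1$ (equivalently $2k-\ell+1\ge2$) and the normalization of $\alpha$ both get used.
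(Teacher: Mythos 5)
Your high-level strategy matches the paper — assume two distinct bases $u,v$, reduce to a spanning $f_\alpha$-full subgraph $G'$ with $|E(G')|\ge k|V(G)|-2\ell+2k+1$, and derive an edge-count contradiction — but the mechanism you sketch for the counting step is both vaguer and structurally off, and I think it constitutes a genuine gap.

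Concretely, the paper does \emph{not} compare two canonical gain functions via a switching cochain, nor does it need the observation that $\tilde\alpha(E(G))$ is attained by a cyclic group; it works entirely with a single $T$-respecting $\psi'$ adapted to the base $v$ (so $\psi'\equiv 1_\Gamma$ off an extra edge set $F_v$ and $\psi'\equiv g$ on $F_v$). The key construction you are missing is the set $K$: the union of edge sets of simple walks from $v$ with gain $g^{-1}$ that do not use $u$ as an internal vertex. One then shows (a) both $K$ and $E'\setminus K$ are nonempty and \emph{balanced}, and (b) $V(K)\cap V(E'\setminus K)\subseteq\{u,v\}$; (b) is exactly where the second base $u$ is used, since a $w\notin\{u,v\}$ in both vertex sets would yield an unbalanced cycle avoiding $u$, contradicting that a split at $u$ is balanced. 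Balancedness is crucial because it gives the plain $(k,\ell)$ count $|K|\le k|V(K)|-\ell$ and $|E'\setminus K|\le k|V(E'\setminus K)|-\ell$ with no lifting term, and then $|E'|\le k(|V(E')|+2)-2\ell=k|V(G)|-2\ell+2k$, contradicting fullness. Your description of "two edge-disjoint near-balanced fractions each close to its $f_\alpha$-bound" gets this wrong in a way that matters: if the pieces were only near-balanced, the lifting terms $\beta$ would reappear on both sides of the count and the inequality would not close. Your "aligned vs.\ crossing" dichotomy for $E_u'$ and $E_v'$ is not needed once $K$ is defined, and I do not see how to make that dichotomy yield the bound without essentially rediscovering $K$. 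So: right target inequality, but the decomposition into two balanced pieces meeting only in $\{u,v\}$ — the heart of the argument — is absent from the proposal.
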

\begin{proof}
By definition, $(G,\psi)$ 
contains a spanning connected $f_{\alpha}$-sparse subgraph $(G',\psi)$ with 
\begin{equation}
\label{eq:tight10}
|E(G')|\geq k|V(G)|-2\ell+2k+1
\end{equation}
and  $\beta(E(G'))=\beta(E(G))$. 
Note that 
 $(G',\psi)$ is also near-balanced, since otherwise $(G',\psi)$ would be balanced 
 and $0=\beta(E(G'))=\beta(E(G))=\tilde{\alpha}(E(G))$, contradicting that $(G,\psi)$ is unbalanced.
Thus it suffices to show the uniqueness of the base for $(G',\psi)$.
Let $E'=E(G')$.

Suppose that there are two distinct base vertices $u$ and $v$ for the near-balancedness of $(G',\psi)$. 
Clearly $G'$ cannot contain an unbalanced loop since otherwise, say if $u$ is incident to an unbalanced loop,  then any split at $v$ cannot be balanced.
Without loss of generality, assume that all edges incident to $v$ are directed to $v$.
By Proposition~\ref{prop:0} there are $g\in \Gamma\setminus \{1_{\Gamma}\}$, $F_v\subseteq E'_v$, and an equivalent gain function $\psi'$ such that 
\begin{equation}
\label{eq:tight11}
\text{$\psi'(e)=g$ for $e\in F_v$ and  $\psi'(e)=1_{\Gamma}$ for $e\in E\setminus F_v$}.
\end{equation}
Note also that 
\begin{equation}
\label{eq:tight12}
\text{$F_v\neq \emptyset$ and $E_v'\setminus F_v\neq \emptyset$}, 
\end{equation}
since otherwise $G$ would be balanced.

Let $K$ be the union of the edge sets of all simple walks $W$ in $G'$ starting at $v$  with the following property:
\begin{equation}
\label{eq:tight1prop} 
\text{$\psi'(W)=g^{-1}$ and $W$ does not contain  $u$ as an internal node (but may be the last).}
\end{equation}
By (\ref{eq:tight11}), $F_v\subseteq K$ and $E_v'\setminus F_v \subseteq E'\setminus K$.
Hence, (\ref{eq:tight11}) again implies that $K$ and $E'\setminus K$ are balanced.
Since they are also nonempty by (\ref{eq:tight12}), 
we get 
\begin{equation}
\label{eq:tight13}
\text{$|K|\leq k|V(K)|-\ell$ and $|E'\setminus K|\leq k|V(E'\setminus K)|-\ell$}
\end{equation} 
by $f_{\alpha}$-sparsity.
We also claim that 
\begin{equation}
\label{eq:tight14}
V(K)\cap V(E'\setminus K)\subseteq \{u,v\}.
\end{equation}
To see this,  suppose  that there is a vertex $w\in V(K)\cap V(E'\setminus K)$ other than $u$ and $v$,
and let $e'$ be an edge of $E'\setminus K$ incident to $w$.
Then the other endvertex of $e'$ should be $v$ since otherwise there would be a simple walk passing $e'$ and satisfying (\ref{eq:tight1prop}).
However, by $w\in V(K)$, the concatenation of $e'$ and a simple path from $v$ to $w$ with gain $g^{-1}$ is an unbalanced cycle which does not pass through $u$,
contradicting that a split of $(G,\psi)$ at $u$ results in a balanced graph.
Hence (\ref{eq:tight14}) holds. 
Combining (\ref{eq:tight13}) and (\ref{eq:tight14}), we get 
$|E'|=|K|+|E'\setminus K|\leq k|V(K)|+k|V(E'\setminus K)|-2\ell
\leq k|V(E')|-2\ell+2k=k|V(G)|-2\ell+2k$, which contradicts (\ref{eq:tight10}).
\end{proof}

\begin{lemma}
\label{lem:near1}
Suppose that $(G,\psi)$ is near-balanced and $f_{\alpha}$-full with $\beta(E(G))\geq 2k-\ell+1$.
Then each fraction of a base $v$ is near-balanced.
In particular, for each extra edge set $K$ of the near-balancedness, $G-K$ is connected.
\end{lemma}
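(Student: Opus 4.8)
\textbf{Proof proposal for Lemma~\ref{lem:near1}.}

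The plan is to reduce to the structural description of near-balanced graphs obtained in Lemma~\ref{lem:tight1} and Proposition~\ref{prop:0}. By Lemma~\ref{lem:tight1} the base $v$ is unique, so we fix it and fix an extra edge set $K$ of the near-balancedness. As in the proof of Lemma~\ref{lem:tight1}, $(G,\psi)$ contains a spanning connected $f_{\alpha}$-sparse subgraph $(G',\psi)$ with $\beta(E(G'))=\beta(E(G))$ and $|E(G')|\geq k|V(G)|-2\ell+2k+1$; this $G'$ is itself near-balanced with base $v$. First I would choose a spanning tree $T$ of $G'$ avoiding the extra edge set $K\cap E(G')$ (possible since $G'-K$ is balanced, hence connected — or rather, we want to build $T$ so that the corresponding piece is a maximal forest as in Proposition~\ref{prop:0}'s proof), take a $T$-respecting equivalent gain function $\psi'$, and invoke Proposition~\ref{prop:1}: there is a nonidentity $g\in\Gamma$ with $\psi'(e)=g$ for $e\in K$ and $\psi'(e)=1_\Gamma$ otherwise. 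Thus after switching we may assume $\psi$ itself has this normal form.

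Next I would analyze a single fraction $G''$ of $v$ in $G'$. A fraction is the subgraph induced by $v$ together with one connected component $C$ of $G'-v$ (minus the loops at $v$). If $G''$ is balanced, then all the edges of $K$ meeting $G''$ carry gain $g$ in the normal form while every edge of $G''$ outside $K$ carries $1_\Gamma$; but then, since a fraction is connected through $v$, having two edges of $K$ in $G''$ both directed to $v$ would create a balanced closed walk forcing... actually the key point is the opposite: if the fraction $G''$ contains \emph{no} edge of $K$, then in the normal form $G''$ is balanced and also $G''$ is disjoint from $K$, so removing $K$ keeps $G''$ intact and connected to the rest through $v$; if $G''$ contains an edge of $K$ but is balanced — this cannot happen, because an edge $e$ of $K$ incident to $v$ together with any $v$–$w$ path in $G''$ of gain $1_\Gamma$ would be an unbalanced cycle lying entirely inside the fraction, i.e. avoiding every \emph{other} base candidate, but more to the point this unbalanced cycle shows $G''$ is not balanced, contradiction. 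So each fraction containing an edge of $K$ is unbalanced, and by the normal form (all its $K$-edges carry the same $g$ and the rest carry $1_\Gamma$) it is itself near-balanced with base $v$ by Proposition~\ref{prop:0}, the extra edge set being exactly its intersection with $K$. Hence every fraction of $v$ in $G'$ is either near-balanced (those meeting $K$) or balanced and disjoint from $K$; in particular every fraction is near-balanced or $K$-free, and removing $K$ from $G'$ leaves each fraction either untouched or as the balanced graph $G''-(K\cap E(G''))$, which is still connected (Proposition~\ref{prop:0}/Proposition~\ref{prop:1} guarantee $G''-K$ is connected for a near-balanced $G''$). Since all fractions share the vertex $v$, the union $G'-K$ is connected, and therefore $G-K\supseteq G'-K$ is connected.

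The remaining point is to transfer the conclusion from $G'$ back to $G$ — that each fraction of $v$ in $G$ (not merely in $G'$) is near-balanced. Here I would argue that a fraction $H$ of $v$ in $G$ restricts, on the vertex set of its component $C$ of $G-v$, to a union of fractions of $v$ in $G'$ (the components of $G'-v$ contained in $C$), each of which we just showed is near-balanced or $K$-free-balanced; adding back the edges of $G$ inside $C$ and between $C$ and $v$ cannot destroy near-balancedness because these edges, in the global normal form for $(G,\psi)$ given by Proposition~\ref{prop:1} applied to $G$ with its own extra set, again carry either $g$ or $1_\Gamma$, so $H$ still has the normal form of Proposition~\ref{prop:0} with base $v$. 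The main obstacle I anticipate is precisely this last bookkeeping step: making sure the normal form chosen on $G'$ is compatible with (a switching-equivalent of) the one on $G$, and handling loops at $v$ correctly — a balanced loop at $v$ stays with $v_1$ and is irrelevant, while an unbalanced loop at $v$ behaves like an element of $K$ and must be carried along in the fraction argument. Once the normal form is pinned down globally, the near-balancedness of each fraction and the connectivity of $G-K$ both follow directly from Propositions~\ref{prop:0} and~\ref{prop:1}.
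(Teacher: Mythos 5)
Your argument has a serious gap: you never actually prove the main claim that \emph{every} fraction of $v$ is near-balanced. Your case analysis concludes only that ``every fraction of $v$ in $G'$ is either near-balanced (those meeting $K$) or balanced and disjoint from $K$'', and you never rule out the balanced, $K$-free case. Indeed, your final paragraph speaks of ``transferring the conclusion \ldots that each fraction of $v$ in $G$ \ldots is near-balanced'' as if the claim had been established for $G'$, but it has not; a balanced fraction remains a live possibility throughout your argument. This omission is precisely where the hypotheses $f_\alpha$-full and $\beta(E(G))\geq 2k-\ell+1$ must enter. You quote the cardinality bound $|E(G')|\geq k|V(G)|-2\ell+2k+1$ but never use it. The paper's proof is a direct counting contradiction: if some fraction $S$ of $v$ were balanced, one could split $E(G')$ into $E(G')\cap E(S)$ and $E(G')\setminus E(S)$ (overlapping only at $v$), bound the first by $k|V(G')\cap V(S)|-\ell$ since $S$ is balanced and the second by $k|(V(G')\setminus V(S))\cup\{v\}|-\ell+k$ using $f_\alpha$-sparsity, and obtain $|E(G')|\leq k|V(G)|-2\ell+2k$, contradicting the assumed lower bound. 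That two-line computation is the heart of the lemma, and your proposal has no analogue of it.

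Two further issues. First, the aside ``possible since $G'-K$ is balanced, hence connected'' is circular (connectivity of $G-K$ is precisely the ``in particular'' conclusion you are trying to prove) and also incorrect in principle, since balancedness does not imply connectivity. Second, even granting near-balancedness of a fraction $G''$, the step asserting ``Propositions~\ref{prop:0}/\ref{prop:1} guarantee $G''-K$ is connected'' is not automatic: you would still need to argue that $K$ does not exhaust all non-loop edges of $G''$ at $v$, which follows from $G''$ being unbalanced (an unbalanced cycle of $G''$ must pass through $v$ using two non-loop edges, and a balanced split must separate them), i.e., from exactly the statement you have not yet proved. Once the counting argument is supplied, the structural reasoning you lay out becomes essentially the right explanation of the ``in particular'' clause, and the invocation of Lemma~\ref{lem:tight1} and the $G'$-to-$G$ bookkeeping become unnecessary.
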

\begin{proof}
It suffices to show that each fraction $S$ of $v$ is unbalanced.
Suppose that $S$ is balanced.
By definition, $(G,\psi)$ 
contains a  spanning $f_{\alpha}$-sparse subgraph $(G',\psi)$ with $|E(G')|\geq k|V(G)|-2\ell+2k+1$. 
Then 
\begin{align*}
|E(G')|&=|E(G')\setminus E(S)|+|E(G')\cap E(S)| && \\
&\leq k|(V(G')\setminus V(S))\cup \{v\}|-\ell+k
+k|V(G')\cap V(S)|-\ell && \text{(by $f_{\alpha}$-sparsity)} \\
&=k |V(G)|-2\ell+2k,  && \text{(since $S$ is a fraction)},
\end{align*}
which is a contradiction.
Hence $S$ is unbalanced.
\end{proof}

A $\Gamma$-labeled graph $(G,\psi)$ (resp.~an edge set $E$) is called {\em $\alpha$-critical} if it is connected and near-balanced with $\tilde{\alpha}(E(G))>k$. 
If $(G,\psi)$ is  $\alpha$-critical, then 
$\ell\geq \tilde{\alpha}(E(G))>k$ and hence $\beta(E(G))=k>2k-\ell$ follows.
This in turn implies that an $\alpha$-critical graph always satisfies the assumption for $\beta(E(G))$ in  Lemmas~\ref{lem:tight1} and~\ref{lem:near1}.

The following lemma (Lemma~\ref{lem:tight2}) says that, for an $\alpha$-critical graph, even an extra edge set for the near-balancedness is uniquely determined (up to complementation of non-loop edges).

\begin{lemma}
\label{lem:tight2}
Suppose that $(G,\psi)$ is $\alpha$-critical and $f_{\alpha}$-full, and let  $v$ be the base.
If there are two distinct extra edge sets $E_1$ and $E_2$ for the  near-balancedness, 
then  $\{E_1\setminus L_v, E_2\setminus L_v\}$ is a partition of $E_v\setminus L_v$.
\end{lemma}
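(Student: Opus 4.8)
The plan is to argue by contradiction, using the same kind of edge-counting that drove Lemmas~\ref{lem:tight1} and~\ref{lem:near1}. By Lemma~\ref{lem:tight1} the base $v$ is unique, so both $E_1$ and $E_2$ arise from splits at the \emph{same} vertex $v$. Suppose $\{E_1\setminus L_v, E_2\setminus L_v\}$ is not a partition of $E_v\setminus L_v$. Since complementing a non-loop extra edge set at $v$ (together with keeping $L_v\setminus L_v^\circ$) again gives an extra edge set, we may assume after possibly replacing $E_2$ by its complement that $E_1\setminus L_v$ and $E_2\setminus L_v$ are incomparable, i.e.\ neither contains the other, while they are not complementary either. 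The goal is to find two balanced subgraphs covering $E(G)$ whose vertex sets overlap in too few vertices, contradicting $f_\alpha$-fullness exactly as in Lemma~\ref{lem:tight1}.

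\textbf{Key steps.} First I would pass to an $f_\alpha$-sparse spanning subgraph $(G',\psi)$ witnessing $f_\alpha$-fullness, so that $|E(G')|\geq k|V(G)|-2\ell+2k+1$, and note (as in Lemma~\ref{lem:tight1}) that $(G',\psi)$ is still $\alpha$-critical with the same base $v$ and, by Proposition~\ref{prop:1}, that after a suitable $T$-respecting switching each $E_i$ carries a single nonidentity label $g_i$ while all other edges are trivial. Since both $E_1$ and $E_2$ are extra edge sets under the \emph{same} gain function $\psi'$, the unbalanced cycles through $v$ are exactly those whose edge set meets $E_i$ in an odd-weighted way; the condition that \emph{every} unbalanced cycle avoids the rest of the split forces a strong structural constraint. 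Concretely, consider the four sets $A_{00}=E\setminus(E_1\cup E_2)$, $A_{10}=E_1\setminus E_2$, $A_{01}=E_2\setminus E_1$, $A_{11}=E_1\cap E_2$ (all relative to non-loop edges), and observe that each $A_{st}$ is balanced: an unbalanced cycle inside $A_{st}$ would have to use $v$ and its label pattern would be constant, forcing it to be balanced unless $s=1$ or $t=1$, and even then, since the cycle would have to pass $v$ twice through edges of the \emph{same} $E_i$-class, its total gain is $g_i g_i^{-1}=1_\Gamma$. Thus I get four balanced edge sets, each incident only to $v$ plus its own ``far side'' vertices, with pairwise vertex intersections contained in $\{v\}$ except that $A_{10}$ and $A_{11}$ (resp.\ $A_{01}$ and $A_{11}$) may additionally share the second endpoint of some edges — this is where the partition-into-two versus partition-into-four distinction matters.

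\textbf{The main obstacle.} The delicate point is controlling the vertex overlaps $V(A_{st})\cap V(A_{s't'})$: in Lemma~\ref{lem:tight1} one only had two pieces with overlap $\subseteq\{u,v\}$, but here with up to four pieces a naive bound loses too much. I expect the resolution is that if $A_{10},A_{01},A_{11}$ are all nonempty then there is effectively a second ``base-like'' vertex, contradicting uniqueness of the base (Lemma~\ref{lem:tight1}) — so at most two of $A_{10},A_{01},A_{11}$ are nonempty, and then the two-piece count of Lemma~\ref{lem:tight1} applies verbatim: $|E'|\leq k|V(K)|+k|V(E'\setminus K)|-2\ell\leq k|V(G)|-2\ell+2k$, contradicting $|E(G')|\geq k|V(G)|-2\ell+2k+1$. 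Making precise the claim ``three nonempty pieces $\Rightarrow$ a second valid base'' (equivalently, that $E_1\setminus L_v$ and $E_2\setminus L_v$ must be nested or complementary) is the real content, and I would prove it by exhibiting, from an edge in $A_{10}$, an edge in $A_{01}$, and an edge in $A_{11}$, an unbalanced cycle through $v$ that still avoids some other vertex $w$, so that the split at $w$ is balanced — yielding the forbidden second base.
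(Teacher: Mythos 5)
There is a genuine gap here, and it is not peripheral. Your argument never invokes the smoothness assumption~(\ref{eq:assumption}), but that assumption is the crux of this lemma. Recall that $\alpha$-criticality means $\tilde{\alpha}(E(G))>k$; by Proposition~\ref{prop:1} (after a $T$-respecting switching with $T\cap E_1=\emptyset$) the whole gain structure is captured by one element $g$ with $\psi(e)=g$ on $E_1$ and $\psi(e)=1_\Gamma$ elsewhere, so $\tilde{\alpha}(E(G))=\alpha(\{g\})>k$, and (\ref{eq:assumption}) then forces $g^2\neq 1_\Gamma$. The paper's proof shows that if the partition conclusion fails (some fraction $S$ has $E_1\cap E(S)=E_2\cap E(S)$ while another fraction $S'$ is split complementarily), then the balancedness of the $E_2$-split produces a closed walk of gain $g^2$, so $g^2=1_\Gamma$ — contradiction. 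Your proposal instead tries to get a vertex-overlap edge count as in Lemma~\ref{lem:tight1}, and hopes to manufacture ``a second base'' contradicting uniqueness. But when $g$ has order two there genuinely can be two non-complementary extra edge sets at the unique base $v$, with the graph still $f_{\alpha}$-full and near-balanced; no edge count or base-uniqueness argument rules this out, because nothing is wrong combinatorially — the obstruction is purely group-theoretic and is exactly what (\ref{eq:assumption}) excludes. So ``I expect the resolution is that ... contradicting uniqueness of the base'' is a hope that cannot be realized.

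A secondary but real problem: you assume that ``after a suitable $T$-respecting switching each $E_i$ carries a single nonidentity label $g_i$ while all other edges are trivial.'' Proposition~\ref{prop:1} gives this for one extra edge set at a time, and only for a spanning tree disjoint from that particular extra set. You cannot in general choose a single $T$ (and hence a single switching) disjoint from both $E_1$ and $E_2$, so the two descriptions live in different gain functions and cannot be superposed to define your $A_{st}$ decomposition. The paper avoids this by fixing the $\psi$ that trivializes everything outside $E_1$, and then reasoning about what constraints balancedness of the $E_2$-split imposes on $E_2$ under that fixed $\psi$; it never needs a second normalization. Relatedly, the claim that each $A_{st}$ is balanced is unsupported, and the reasoning offered (``the cycle would have to pass $v$ twice through edges of the same $E_i$-class'') does not follow, since an unbalanced cycle in $G$ need not pass through $v$ at all, nor through two edges of the same class.

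In short: the correct mechanism is to fix one Proposition~\ref{prop:1} normalization, classify fractions of $v$ into the two types (i) $E_1\cap E(S)=E_2\cap E(S)$ and (ii) complementary, show mixed types force $g^2=1_\Gamma$, and then kill that with (\ref{eq:assumption}) via $\alpha(\{g\})>k$. No version of the two-piece count from Lemma~\ref{lem:tight1} can replace this, because nothing in the graph's vertex/edge counts is violated when $g^2=1_\Gamma$.
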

 \begin{proof}
 By Lemma~\ref{lem:near1},  $G-E_1$ is connected and hence $G$ contains a spanning tree $T$ with $T\cap E_1=\emptyset$. 
 We may assume that $\psi$ is $T$-respecting.
Then there is an element  $g\in \Gamma\setminus \{1_{\Gamma}\}$ such that $\psi(e)=g$ for $e\in E_1$ and $\psi(e)=1_{\Gamma}$ for $e\in E\setminus E_1$ by Proposition~\ref{prop:1}. 

 Let $S$ be a fraction of $v$.
 By Lemma~\ref{lem:near1}, $\emptyset \neq E_i\cap E(S)\neq E_v\cap E(S)$ for $i=1,2$.
 Since $S-v$ is connected, if $E_2\cap E(S)$ contains an edge with label $g$ and an edge with label $1_{\Gamma}$, then the split of $(S,\psi)$ at $v$ with the partition of $\{E_2\cap E(S), (E_v\cap E(S))\setminus E_2\}$ contains an unbalanced cycle, which contradicts that the split is balanced.
 Similarly, $(E_v\cap E(S))\setminus E_2$ cannot contain an edge with label $g$ and an edge with label $1_{\Gamma}$ simultaneously. 
 These imply that 
\begin{description}
 \item[(i)] $E_1\cap E(S)=E_2\cap E(S)$, or 
 \item[(ii)] $\{E_1\cap E(S), E_2\cap E(S)\}$ is a partition of $E_{v}\cap E(S)$
 \end{description}
 for each fraction $S$ of $v$.
 
 \begin{figure}[t]
\centering
\begin{minipage}{0.32\textwidth}
\centering
\includegraphics[scale=0.8]{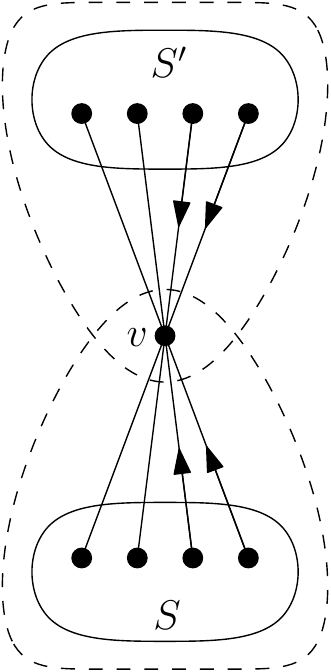}
\par
(a)
\end{minipage}
\begin{minipage}{0.32\textwidth}
\centering
\includegraphics[scale=0.8]{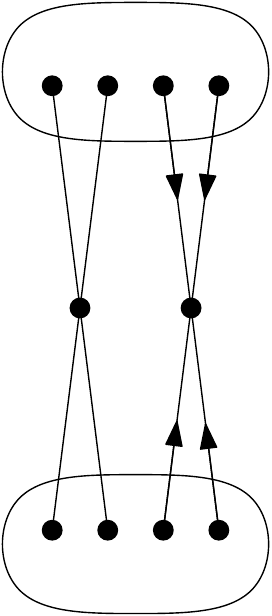}
\par
(b)
\end{minipage}
\begin{minipage}{0.32\textwidth}
\centering
\includegraphics[scale=0.8]{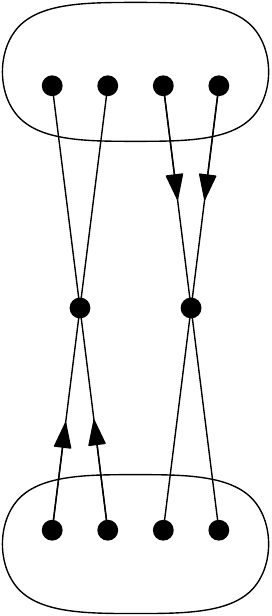}
\par
(c)
\end{minipage}
\caption{Proof of Lemma~\ref{lem:tight2}. 
(a) $(S\cup S', \psi)$, (b) the splitting at $v$ with the partition $\{E_1\cap E(S\cup S'), (E_v\cap E(S\cup S'))\setminus E_1\}$, and (c) the splitting at $v$ with the partition $\{E_2\cap E(S\cup S'), (E_v\cap E(S\cup S'))\setminus E_2\}$, where the oriented edges have the label $g$ and other edges have the identity label.  }
\label{fig:lem44}
\end{figure}
 Since $E_1\neq E_2$, there is a fraction $S$ of $v$ satisfying (ii).
 If there is another fraction $S'$ of $v$ satisfying (i),
 then the split of $(S\cup S',\psi)$ at $v$ with the partition $\{E_2\cap E(S\cup S'), (E_v\cap E(S\cup S'))\setminus E_2\}$  contains a closed walk with gain $g^2$. 
 See Figure~\ref{fig:lem44}.
 Thus $g^2=1_{\Gamma}$.
 However, since $G$ is $\alpha$-critical, $\tilde{\alpha}(E)=\alpha(\{g\})>k$.
 This contradicts the smoothness assumption (\ref{eq:assumption}).
 Therefore each fraction satisfies (ii), and  $\{E_1\setminus L_v, E_2\setminus L_v\}$ is a partition of $E_v\setminus L_v$.
%
 \end{proof}

We also remark the following easy lemma.

\begin{lemma}
\label{lem:alpha1}
Suppose that $(G,\psi)$ is $\alpha$-critical.
Then any connected subgraph of $(G,\psi)$ is either $\alpha$-critical or balanced.
\end{lemma}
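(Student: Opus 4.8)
The plan is to exploit the rigidity that $\alpha$-criticality imposes, as already quantified in Lemmas~\ref{lem:tight1}--\ref{lem:tight2}, together with the basic fact that a connected subgraph of a near-balanced graph is either balanced or near-balanced. Let $(G,\psi)$ be $\alpha$-critical, so it is connected, near-balanced, and $\tilde{\alpha}(E(G))>k$; recall that this forces $\ell>k$ and $\beta(E(G))=k>2k-\ell$, so the hypotheses of Lemmas~\ref{lem:tight1} and~\ref{lem:near1} are automatically available whenever a subgraph is also $f_\alpha$-full. Fix a base $v$ for the near-balancedness of $(G,\psi)$ and an extra edge set $E'$, and (using Lemma~\ref{lem:near1}, so that $G-E'$ is connected) pass to an equivalent $T$-respecting gain function for a spanning tree $T\subseteq E(G)\setminus E'$. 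By Proposition~\ref{prop:1} there is a nonidentity $g\in\Gamma$ with $\psi(e)=g$ for $e\in E'$ and $\psi(e)=1_\Gamma$ for $e\in E(G)\setminus E'$; in particular $\tilde\alpha(E(G))=\alpha(\{g\})$.

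First I would handle the balanced/near-balanced dichotomy itself. Let $H\in{\cal C}(G)$ be a connected subgraph that is not balanced; I claim $H$ is near-balanced with base $v$ (when $v\in V(H)$) or with a base on the tree path — more cleanly: because $\psi$ takes only the two values $1_\Gamma$ and $g$, and $E'$ consists of edges incident to $v$, the edge set $E'\cap E(H)$ is incident to $v$ in $H$ and its removal from $H$ leaves $H-(E'\cap E(H))$ balanced. If $v\in V(H)$ this exhibits a split of $(H,\psi)$ at $v$ making $E(H)$ balanced, so $H$ is near-balanced. If $v\notin V(H)$, then $E'\cap E(H)=\emptyset$, so all edges of $H$ carry label $1_\Gamma$ and $H$ is balanced, contrary to assumption. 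This proves the dichotomy.

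Next I would upgrade "near-balanced" to "$\alpha$-critical" for such an $H$ using submodularity and (c3)--(c5) of the polymatroidal axioms. With the $T$-respecting gain as above, $\tilde\alpha(H)=\alpha(\psi(E(H_0)))$ for a spanning tree $H_0$ of $H$; since $H$ is unbalanced, $\psi(E(H))$ contains $g$ (a closed walk in $H$ must traverse $E'\cap E(H)$ an odd number of times, net), so $\langle\psi(E(H))\rangle\ni g$, hence by monotonicity (c3) and closure-invariance (c5), $\tilde\alpha(H)=\alpha(\langle\psi(E(H))\rangle)\ge\alpha(\langle\{g\}\rangle)=\alpha(\{g\})=\tilde\alpha(E(G))>k$. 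Thus $H$ is connected, near-balanced, and $\tilde\alpha(H)>k$, i.e.\ $\alpha$-critical, which is exactly the claim.

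The step I expect to be the only real subtlety is making the phrase "a closed walk in $H$ traverses $E'$ net-oddly, hence $g\in\langle\psi(E(H))\rangle$" airtight when $H$ may contain several of the edges of $E'$, or edges of $E'$ that are unbalanced loops; the clean way is to note that $\Gamma\to\Gamma/N$ (with $N$ the normal closure of the relations) is captured by the homomorphism from the fundamental group of $H$ into $\Gamma$ whose image is $\langle\psi(E(H))\rangle_{v',\psi}$ for any $v'\in V(H)$, together with the observation (Proposition~\ref{prop:1}, applied inside $H$) that $E'\cap E(H)$ is still an extra edge set for $H$ once we know $H$ is near-balanced — so in fact one can simply invoke Proposition~\ref{prop:1} for $(H,\psi)$ to get $\tilde\alpha(H)=\alpha(\{g\})$ directly, and the inequality becomes an equality. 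This makes the argument short and avoids any delicate walk-parity bookkeeping.
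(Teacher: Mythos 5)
Your argument has the right shape, but it relies on a hypothesis the lemma does not grant you. You invoke Lemma~\ref{lem:near1} to conclude that $G-E'$ is connected (so that a spanning tree $T$ with $T\cap E'=\emptyset$ exists and Proposition~\ref{prop:1} applies), but Lemma~\ref{lem:near1} requires $(G,\psi)$ to be $f_{\alpha}$-\emph{full}, and Lemma~\ref{lem:alpha1} only assumes $\alpha$-criticality. An $\alpha$-critical graph need not be $f_{\alpha}$-full — for instance, the disjoint union of an unbalanced triangle with some extra pendant trees hanging off it is near-balanced with $\tilde{\alpha}>k$ but can easily fail fullness — so the very first step of your proof is not available. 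The same issue resurfaces in your closing paragraph, where you want to apply Proposition~\ref{prop:1} to the subgraph $H$: again that would need $H$ minus its extra edge set to be connected, which nothing guarantees. Also, your intermediate formula $\tilde{\alpha}(H)=\alpha(\psi(E(H)))$ is only valid when $\psi$ is $T_H$-respecting for a spanning tree $T_H$ of $H$; your $\psi$ is only $T$-respecting for a spanning tree of $G$, and $T\cap E(H)$ need not span $H$.

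The fix, which is precisely what the paper does, is to bypass Lemma~\ref{lem:near1} and Proposition~\ref{prop:1} entirely and use Proposition~\ref{prop:0} directly: it asserts, with no connectivity or fullness hypotheses beyond near-balancedness, the existence of $v$, a non-identity $g$, $E_v'\subseteq E_v$, and an equivalent gain $\psi'$ with $\psi'(e)=g$ on $E_v'$ and $\psi'(e)=1_\Gamma$ elsewhere, whence $\alpha(\{g\})=\tilde{\alpha}(E(G))>k$. With that in hand, every unbalanced closed walk in a connected subgraph $H$ must pass through $v$ and has gain $g^{\pm1}$, so $g\in\langle H\rangle_{u,\psi'}$ for any $u\in V(H)$; then $\tilde{\alpha}(H)=\alpha(\langle H\rangle_{u,\psi'})\geq\alpha(\{g\})>k$ by (c3) and (c5), and $H$ is clearly near-balanced (restricting $\psi'$ to $H$ again satisfies the hypotheses of Proposition~\ref{prop:0}). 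Your balanced/near-balanced dichotomy and the monotonicity argument are sound once this substitution is made; the rest of your proposal tracks the paper's argument.
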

\begin{proof}
An $\alpha$-critical graph $(G,\psi)$ is near-balanced, and hence by Proposition~\ref{prop:0} there are  $v\in V$,  $g\in \Gamma\setminus \{1_{\Gamma}\}$, $E_v'\subseteq E_v$, and an equivalent gain function $\psi'$ such that, assuming that all edges incident to $v$ are directed to $v$, $\psi'(e)=g$ for $e\in E_v'$ and $\psi'(e)=1_{\Gamma}$ for $e\in E\setminus E_v'$.
Therefore $\alpha(\{g\})=\tilde{\alpha}(E(G))>k$.

If a connected subgraph $G'$ is not balanced, then it contains a walk of gain $g$. Thus $\tilde{\alpha}(E(G'))>k$.
Clearly $G'$ is near-balanced, and hence it is $\alpha$-critical.
\end{proof} 
 
\subsection{Proof of Theorem~\ref{thm:main1}}
\label{subsec:proof}

The proof of Theorem~\ref{thm:main1} follows from 
Lemma~\ref{lem:main1} and Lemma~\ref{lem:main2}, which are analogs of well-known properties of $(k,\ell)$-sparse graphs.
The core of the proofs of those two lemmas is the following hidden submodularity of $\beta$.

\begin{lemma}
\label{lem:4}
Suppose that $X, Y\in {\cal C}(G)$ are $f_{\alpha}$-full sets such that
\begin{itemize}
\item $(V(X) \cap V(Y), X\cap Y)$ is connected,
\item $X\cap Y$ is $f_{\alpha}$-sparse, and 
\item $|X\cap Y|>k|V(X\cap Y)|-2\ell+\min\{2k, \beta(X)+\beta(Y)\}$.
\end{itemize}
Then 
$\beta(X)+\beta(Y)\geq \beta(X\cap Y)+\beta(X\cup Y)$.
\end{lemma}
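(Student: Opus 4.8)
The plan is to split into cases according to whether $X$, $Y$, $X\cup Y$, $X\cap Y$ are balanced, near-balanced, or generic, and to play the polymatroidal submodularity of $\alpha$ against the combinatorial structure theorems proved in Subsection~\ref{subsec:further}. Throughout write $Z = X\cap Y$ and note that since $(V(X)\cap V(Y), Z)$ is connected, $\beta(Z)$ and $\tilde\alpha(Z)$ are well defined, and $Z\in {\cal C}(G)$. The first reduction is easy: $\tilde\alpha$ is a class function induced by a polymatroidal $\alpha$, so by (c2)--(c5) it is submodular along the connected-set lattice, i.e. $\tilde\alpha(X)+\tilde\alpha(Y)\ge \tilde\alpha(X\cap Y)+\tilde\alpha(X\cup Y)$ (choosing a common reference vertex in $V(X)\cap V(Y)$ and comparing the groups generated by closed walks). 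Hence whenever $\beta$ agrees with $\tilde\alpha$ on all four sets, there is nothing to prove; the content is entirely in the sets where $\beta = \min\{\tilde\alpha,k\}$ differs from $\tilde\alpha$, which forces those sets to be near-balanced and $\alpha$-critical.

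\medskip
\textbf{Step 1: dispose of the balanced cases.} If $X\cup Y$ is balanced then $X$, $Y$, $Z$ are all balanced and every term is $0$. If $X\cup Y$ is not balanced but $Z$ is balanced, then $\beta(Z)=0$ and the claim becomes $\beta(X)+\beta(Y)\ge \beta(X\cup Y)$, which (since $\beta\le\tilde\alpha$ on the left, $\beta$ monotone-ish on the right in the sense that $\beta(X\cup Y)\le \tilde\alpha(X\cup Y)$ and, when $X\cup Y$ is near-balanced, $\beta(X\cup Y)\le k$) I would handle by noting at least one of $X,Y$ is unbalanced, hence has $\beta\ge 1$, and using the cardinality hypothesis to rule out the bad configuration $\beta(X)=\beta(Y)=0$.

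\medskip
\textbf{Step 2: the core case --- $X$ and $Y$ both near-balanced, $\beta(X)+\beta(Y)>k$, so $Z$ is unbalanced.} Here $\beta(X)$ or $\beta(Y)$ equals $k$, so at least one of $X,Y$ is $\alpha$-critical; by Lemma~\ref{lem:alpha1} applied inside that $\alpha$-critical graph, $Z$ (being a connected unbalanced subgraph) is $\alpha$-critical too, hence $\tilde\alpha(Z)>k$ and $\beta(Z)=k$. The cardinality hypothesis $|Z|>k|V(Z)|-2\ell+\min\{2k,\beta(X)+\beta(Y)\}$ together with $f_\alpha$-sparsity of $Z$ (so $|Z|\le k|V(Z)|-\ell+\beta(Z)=k|V(Z)|-\ell+k$) forces $\min\{2k,\beta(X)+\beta(Y)\} < 2k-\ell+k$... more usefully, it pins down that we are in the regime $\ell>k$ and that $X$, $Y$ are $f_\alpha$-full near-balanced with $\beta\ge 2k-\ell+1$, so Lemmas~\ref{lem:tight1} and~\ref{lem:tight2} apply: each of $X$, $Y$, and $X\cup Y$ has a \emph{unique} base and (for the $\alpha$-critical ones) a unique extra edge set up to complementation. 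The decisive geometric claim to extract is that the base of $X$, the base of $Y$, and the base of $X\cup Y$ coincide --- I would prove this by observing that the unique unbalanced-cycle structure of $Z$ (which sits inside both $X$ and $Y$) forces a common base vertex, using the characterization of Proposition~\ref{prop:0} and the uniqueness in Lemma~\ref{lem:tight1}. Once the bases agree, the extra edge sets of $X$, $Y$, $X\cup Y$ are mutually compatible by Lemma~\ref{lem:tight2}, which lets me certify that $X\cup Y$ is near-balanced, hence $\beta(X\cup Y)=\min\{\tilde\alpha(X\cup Y),k\}\le k$. Then $\beta(X)+\beta(Y)\ge k+1 > k+k - k = k \ge \beta(Z) + \beta(X\cup Y) - k$... the clean finish is: $\beta(X)+\beta(Y)\ge (2k-\ell+1)+1$ while $\beta(Z)+\beta(X\cup Y)\le k+k$; combined with $\ell\le 2k-1$ this gives $\beta(X)+\beta(Y)\ge\beta(Z)+\beta(X\cup Y)$ unless everything is tight, and the tight subcase is killed by the strict cardinality inequality.

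\medskip
\textbf{Step 3: the mixed cases} --- exactly one of $X,Y$ near-balanced, or $\beta(X)+\beta(Y)\le k$ --- reduce to Step~1's technique plus the plain submodularity of $\tilde\alpha$: when $\beta(X)+\beta(Y)\le k$ the $\min\{2k,\cdot\}$ in the hypothesis equals $\beta(X)+\beta(Y)$, and I expect $X\cup Y$ fails to be $\alpha$-critical (its $\tilde\alpha$ stays $\le k$, or it is outright near-balanced with small $\tilde\alpha$), so $\beta$ tracks $\tilde\alpha$ closely enough that submodularity of $\tilde\alpha$ suffices after truncation bookkeeping.

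\medskip
\textbf{Main obstacle.} The hard part will be Step~2, specifically proving that $X$, $Y$, and $X\cup Y$ share a common base and that their extra edge sets are jointly consistent, so that $X\cup Y$ is certified near-balanced. This is exactly where the $f_\alpha$-fullness hypothesis and the uniqueness lemmas (Lemmas~\ref{lem:tight1}, \ref{lem:tight2}, \ref{lem:near1}) must be combined, and where the smoothness condition (\ref{eq:assumption}) --- ruling out order-two elements with $\tilde\alpha>k$ --- is essential, since an element of order two would let the two extra edge sets overlap in a way that produces a closed walk of gain $g^2=1_\Gamma$ and breaks the partition structure of Lemma~\ref{lem:tight2}. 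Everything else is, as the paper puts it, routine counting.
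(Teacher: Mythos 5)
Your overall strategy is the right one and matches the paper's: exploit the plain submodularity of $\tilde\alpha$ (via a $T$-respecting gain function with a shared spanning tree), and observe that the only danger comes from $\alpha$-critical sets where $\beta$ truncates to $k$. You also correctly locate the hardest sub-case — both $X$ and $Y$ $\alpha$-critical with $\alpha$-critical intersection — and the right tools for it (unique base via Lemma~\ref{lem:tight1}, uniqueness of extra edge sets via Lemma~\ref{lem:tight2}, and Lemma~\ref{lem:near1} to keep $G-F_X$ connected so a suitable spanning tree exists). That part of the plan is sound, though you should note you must first verify that $X\cap Y$ is itself $f_\alpha$-full (the strict cardinality hypothesis together with $\beta(X)+\beta(Y)=2k$ gives $|X\cap Y|\geq k|V(X\cap Y)|-2\ell+2k+1$), since Lemma~\ref{lem:tight1} requires fullness.

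The real gap is in your Step~3. The paper's Case~2 (exactly one of $X$, $Y$ is $\alpha$-critical) is not "plain submodularity plus truncation bookkeeping." It requires showing that $G_{X\cap Y}-X_v'$ is connected (a nontrivial counting argument using the cardinality hypothesis), choosing a spanning tree avoiding $X_v'$, and then splitting on whether $X_v'\cap Y$ is empty. When $X_v'\cap Y=\emptyset$ and $Y$ is unbalanced, you need $\alpha(\psi(Y)\cup\{g\})-\alpha(\psi(Y))\leq k$, which is exactly the smoothness condition (\ref{eq:assumption}) — so this condition is load-bearing here too, not just in the double-$\alpha$-critical case. And when $X_v'\cap Y=\emptyset$ and $Y$ is balanced, $X\cup Y$ in fact \emph{is} $\alpha$-critical (contradicting your expectation that it "fails to be $\alpha$-critical"), and the inequality holds because $\beta(X\cup Y)=k$ and $\beta(Y)=\beta(X\cap Y)=0$. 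A smaller issue: the arithmetic in your Step~2 finish is off — $\beta(X)+\beta(Y)\geq 2k-\ell+2$ does not dominate $\beta(Z)+\beta(X\cup Y)\leq 2k$; what you actually want is $\beta(X)=\beta(Y)=k$ outright (both $\alpha$-critical), and $\beta(X\cap Y),\beta(X\cup Y)\leq k$ once $X\cup Y$ is certified near-balanced.
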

\begin{proof}
Since $(V(X)\cap V(Y), X\cap Y)$ is connected, $G_{X\cap Y}=(V(X)\cap V(Y), X\cap Y)$ holds,
and  there is a spanning tree $T\subseteq X\cup Y$ of $G_{X\cup Y}$ such that $T\cap X, T\cap Y$, and $T\cap X\cap Y$ are spanning trees of $G_X$,
$G_Y$, and $G_{X\cap Y}$, respectively. 
We may assume that $\psi$ is $T$-respecting.
Then we have 
\begin{align}
\nonumber
\tilde{\alpha}(X)+\tilde{\alpha}(Y)&=
\alpha(\psi(X))+\alpha(\psi(Y)) && (\text{by (c5)}) \\ \nonumber
&\geq 
\alpha(\psi(X)\cap \psi(Y))+
\alpha(\psi(X)\cup \psi(Y)) && (\text{by (c2)}) \\ \nonumber
&\geq 
\alpha(\psi(X\cap Y))+\alpha(\psi(X\cup Y)) && ( \text{by (c3)}) \\
&=\tilde{\alpha}(X\cap Y)+\tilde{\alpha}(X\cup Y) && (\text{by (c5)}). \label{eq:sub0}
\end{align}

We split the proof into three cases.

\medskip

\noindent
(Case 1)
Suppose that neither $X$ nor $Y$ are $\alpha$-critical.
Then by (\ref{eq:sub0}) we have
$\beta(X)+\beta(Y)=\tilde{\alpha}(X)+\tilde{\alpha}(Y)\geq 
\tilde{\alpha}(X\cap Y)+\tilde{\alpha}(X\cup Y)=\beta(X\cap Y)+\beta(X\cup Y)$.

\medskip
\noindent
(Case 2) 
Suppose that $X$ is $\alpha$-critical but $Y$ is not $\alpha$-critical.
Let $v$ be the base and $X_v'$ be an extra edge set for the near-balancedness of $X$.
Also let $Z$ be  the set of all non-loop edges of $X\cap Y$ incident to $v$.
Since $((X_v\setminus X_v')\cup L_v)\setminus L_v^{\circ}$ is an extra edge set of $X$, we can always take $X_v'$ such that 
\begin{equation}
\label{eq:lem45_0}
\text{$Z\setminus X_v'\neq \emptyset$ if $Z\neq \emptyset$}.
\end{equation}
We first show
\begin{equation}
\label{eq:lem45_1}
\text{$G_{X\cap Y}-X_v'$ is connected.}
\end{equation}
Suppose not. 
Then $v$ is in $G_{X\cap Y}$ and there is a fraction of $v$ in $G_{X\cap Y}$ which is balanced.
Let $C$ be the edge set of such a fraction. 
Since $v$ is in $G_{X\cap Y}$, $Z$ is nonempty.
Therefore by (\ref{eq:lem45_0}) $\emptyset \neq Z\setminus X_v' \subseteq (X\cap Y)\setminus C$.
Hence, both $C$ and $(X\cap Y)\setminus C$ are nonempty and connected, 
and we get 

\begin{align*}
|X\cap Y|&=|C|+ |(X\cap Y)\setminus C| \\
&\leq f_{\alpha}(C)+f_{\alpha}((X\cap Y)\setminus C) && \text{(by the $f_{\alpha
}$-sparsity)} \\
&\leq k|V(C)|+k|V((X\cap Y)\setminus C)|-2\ell+\beta((X\cap Y)\setminus C) && \text{(since $C$ is balanced)} \\
&\leq k|V(X\cap Y)|-2\ell+k+\beta((X\cap Y)\setminus C) && \text{(since $C$ is a fraction)} \\
&\leq k|V(X\cap Y)|-2\ell+\min\{2k,\beta(X)+\beta(Y)\},
\end{align*}
where the last inequality follows from  
$\beta((X\cap Y)\setminus C)\leq \min\{\beta(X),\beta(Y)\}=\min\{k,\beta(Y)\}$.
This upper bound of $|X\cap Y|$ contradicts the  lemma assumption,
and (\ref{eq:lem45_1}) follows. 

By  (\ref{eq:lem45_1}),  we can take the above spanning tree $T$  such that $T\cap X_v'=\emptyset$. 
Then by Proposition~\ref{prop:1} there is an element $g\in \Gamma\setminus \{1_{\Gamma}\}$ such that
\begin{equation}
\label{eq:lem45_2}
\text{$\psi(e)=g$ for every $e\in X_v'$
and $\psi(e)=1_{\Gamma}$ for every $e\in X\setminus X_v'$.}
\end{equation} 

If $X'_v\cap Y\neq \emptyset$, then $g\in \psi(Y)$ by (\ref{eq:lem45_2}),
and hence $\alpha(\psi(Y))=\alpha(\psi(Y)\cup \{g\})=\alpha(\psi(X\cup Y))$.
Therefore, we have
\[
\beta(X)+\beta(Y)=\beta(X)+\alpha(\psi(Y))=\beta(X)+\alpha(\psi(X\cup Y))
\geq \beta(X\cap Y)+\beta(X\cup Y),\]
where the first equation follows since $Y$ is not $\alpha$-critical and
the third inequality follows due to the definition of $\beta$.

On the other hand, if $X'_v\cap Y=\emptyset$, 
then $X\cap Y$ is balanced since $\psi(e)=1_{\Gamma}$ for every $e\in X\cap Y$ by (\ref{eq:lem45_2}).
If $Y$ is also balanced, then $\psi(e)=1_{\Gamma}$ for every $e\in Y$, which means that 
$X\cup Y$ is $\alpha$-critical. 
Thus \[\beta(X)+\beta(Y)=\beta(X)=k=\beta(X\cup Y)=\beta(X\cup Y)+\beta(X\cap Y).\]
If $Y$ is unbalanced, then \[\beta(X\cup Y)\leq \tilde{\alpha}(X\cup Y)= \alpha(\psi(X\cup Y))=\alpha(\psi(Y)\cup \{g\}),\] and we get 
\[\beta(X\cup Y)-\beta(Y)\leq \alpha(\psi(Y)\cup \{g\})-\alpha(\psi(Y))\leq k\]
where the first inequality follows since $Y$ is not $\alpha$-critical and the  last inequality follows from  (\ref{eq:assumption}). Therefore,  
\[\beta(X)+\beta(Y)=k+\beta(Y)\geq \beta(X\cup Y)=\beta(X\cap Y)+\beta(X\cup Y),\]
where the last equality follows since $X\cap Y$ is balanced.

\medskip
\noindent
(Case 3)  Suppose that both $X$ and $Y$ are $\alpha$-critical.
If $X\cap Y$ is not $\alpha$-critical, then $X\cap Y$ is balanced by Lemma~\ref{lem:alpha1}.
Since $\beta(X\cup Y)\leq \ell$ and $\beta(Y)= k$, we get
\[\beta(X)-\beta(X\cap Y)= k> \ell-k\geq \beta(X\cup Y)-\beta(Y)\] as required.  
Hence we may assume that $X\cap Y$ is $\alpha$-critical.
Also, by the cardinality assumption for $X\cap Y$ with $\beta(X)+\beta(Y)=2k$, we have that $X\cap Y$ is an $f_{\alpha}$-sparse set with  
$|X\cap Y|\geq k|V(X\cap Y)|-2\ell+2k+1$.
Hence  $X\cap Y$ is $f_{\alpha}$-full. 
Therefore, by Lemma~\ref{lem:tight1}, there is a unique base $v$ for the near-balancedness of $X\cap Y$. Now let $F_X\subseteq X$ and $F_Y\subseteq Y$ be extra edge sets for the near-balancedness of $X$ and the near-balancedness of $Y$, respectively.
Then $F_X\cap X\cap Y$ and $F_Y\cap X\cap Y$ are extra edge sets for the near-balancedness for $X\cap Y$. However, the extra edge set is uniquely determined (up to complementation of non-loop edges) by Lemma~\ref{lem:tight2}, and hence we may assume that $F_Y$ is taken so that $F_X\cap X\cap Y=F_Y\cap X\cap Y$.
Moreover, since $X\cap Y$ has a unique base, 
the bases of $X, Y$ and $X\cap Y$ coincide. 

By Lemma~\ref{lem:near1}, 
$G_X-F_X$, $G_Y-F_Y$, and $G_{X\cap Y}-F_X-F_Y$ are connected,
and we can take the above spanning tree $T$ of $G_{X\cup Y}$
such that $T\cap F_X=\emptyset$ and $T\cap F_Y=\emptyset$.
By  Proposition~\ref{prop:1}, we get
$\psi(e)=g$ for $e\in F_X\cup F_Y$ 
and $\psi(e)=1_{\Gamma}$ for $e\notin F_X\cup F_Y$.
Therefore by Proposition~\ref{prop:0} $X\cup Y$ is near-balanced, 
and moreover it is $\alpha$-critical by $\tilde{\alpha}(X\cup Y)=\alpha(\psi(X\cup Y))=\alpha(\{g\})>k$.
Therefore, we get 
$\beta(X)+\beta(Y)=2k=\beta(X\cup Y)+\beta(X\cap Y)$.
This completes the proof.
\end{proof}

For $F\subseteq E(G)$, let $d_F=k|V(F)|-|F|$.
Note that, if $G$ is $f_{\alpha}$-sparse, then $d_F\geq \ell-\beta(F)\geq 0$ for every $F\in {\cal C}(G)$.

\begin{lemma}
\label{lem:main1}
Suppose that $(G,\psi)$ is $f_{\alpha}$-sparse.
Then, for any $f_{\alpha}$-tight sets  $X, Y\in {\cal C}(G)$ with $X\cap Y\neq \emptyset$,
 $X\cup Y$ is $f_{\alpha}$-tight.
\end{lemma}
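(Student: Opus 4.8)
The plan is to use the classical argument for $(k,\ell)$-sparse graphs, but with $\ell$ replaced by the "effective" deficiency $\ell-\beta(\cdot)$, and to feed the combinatorial core through Lemma~\ref{lem:4}. First I would record the standard count identity: for any two finite edge sets $X,Y$ with $X\cap Y\neq\emptyset$ we have $|X|+|Y|=|X\cup Y|+|X\cap Y|$ and $|V(X)|+|V(Y)|\geq |V(X\cup Y)|+|V(X\cap Y)|$, the latter because $(V(X)\cap V(Y))\supseteq V(X\cap Y)$. Combining these, $d_X+d_Y\geq d_{X\cup Y}+d_{X\cap Y}$, i.e. the function $d$ is submodular on connected sets whenever their intersection is nonempty (and $V(X)\cap V(Y)=V(X\cap Y)$, which holds since $X\cap Y\neq\emptyset$ forces a shared vertex — though one must be slightly careful: $X\cap Y$ connected is not automatic, so I first restrict attention to a connected component of $X\cap Y$, or better, observe that since $X,Y\in{\cal C}(G)$ and share an edge, $X\cup Y\in{\cal C}(G)$, and handle the intersection component-by-component). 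Using $f_{\alpha}$-tightness of $X$ and $Y$, $d_X=\ell-\beta(X)$ and $d_Y=\ell-\beta(Y)$, while $f_{\alpha}$-sparsity of $G$ gives $d_{X\cup Y}\geq \ell-\beta(X\cup Y)$ and $d_{X\cap Y}\geq \ell-\beta(X\cap Y)$ (for each component of $X\cap Y$, but I will argue $X\cap Y$ is in fact connected here). Plugging in, submodularity of $d$ yields
\[
\beta(X)+\beta(Y)\leq \beta(X\cup Y)+\beta(X\cap Y),
\]
and if additionally $\beta(X)+\beta(Y)\geq\beta(X\cup Y)+\beta(X\cap Y)$, then all the above inequalities are tight, forcing $d_{X\cup Y}=\ell-\beta(X\cup Y)$, i.e. $|X\cup Y|=f_{\alpha}(X\cup Y)$, which is exactly $f_{\alpha}$-tightness of $X\cup Y$ (sparsity of $X\cup Y$ is inherited since $X\cup Y\subseteq E(G)$ and $G$ is $f_{\alpha}$-sparse).

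So the entire burden is to establish the reverse inequality $\beta(X)+\beta(Y)\geq\beta(X\cup Y)+\beta(X\cap Y)$, and this is precisely what Lemma~\ref{lem:4} delivers, provided its three hypotheses hold. I would verify them as follows. First, $X\cap Y$ connected: suppose $X\cap Y$ had two components $C_1,C_2$; then applying the submodular count $d_X+d_Y\geq d_{X\cup Y}+d_{C_1}+d_{C_2}+(\text{something})$ — more carefully, one uses that $X\cup Y$ is connected and a standard "no component of the intersection can be separated" argument: since $X$ and $Y$ are each $f_{\alpha}$-tight hence have zero slack $d=\ell-\beta$, while each component of $X\cap Y$ contributes deficiency at least $\ell-\beta(\text{component})\geq \ell-\ell=0$ but typically positive, and $\beta$ is bounded by $\ell$, one derives that the total deficiency cannot be distributed over two components without violating tightness of $X$ or $Y$. (This is the standard lemma that $f_{\alpha}$-tight sets intersecting in more than the empty set intersect in a connected set; I would either cite it from the $(k,\ell)$-sparse theory adapted here or spell out the two-line count.) Second, $X\cap Y$ is $f_{\alpha}$-sparse: immediate, as $X\cap Y\subseteq E(G)$. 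Third, the cardinality bound $|X\cap Y|>k|V(X\cap Y)|-2\ell+\min\{2k,\beta(X)+\beta(Y)\}$: this is equivalent to $d_{X\cap Y}<2\ell-\min\{2k,\beta(X)+\beta(Y)\}$; since $X\cap Y$ is $f_{\alpha}$-sparse, $d_{X\cap Y}\geq \ell-\beta(X\cap Y)$, so I need $\ell-\beta(X\cap Y)<2\ell-\min\{2k,\beta(X)+\beta(Y)\}$, i.e. $\min\{2k,\beta(X)+\beta(Y)\}<\ell+\beta(X\cap Y)$. Here I would use monotonicity-type facts: $\beta(X\cap Y)\geq\beta(X\cap Y)$ trivially, and more usefully, from the already-derived submodular inequality $\beta(X\cap Y)\geq\beta(X)+\beta(Y)-\beta(X\cup Y)\geq\beta(X)+\beta(Y)-\ell$; combined with $\beta(X)+\beta(Y)\leq 2\ell$ and the bound $\ell\leq 2k-1$, the desired strict inequality should follow — this is the step I expect to require the most care.

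Actually, on reflection I suspect the cleanest route is not to derive the reverse $\beta$-inequality independently but to run the whole argument by contradiction: assume $X\cup Y$ is \emph{not} $f_{\alpha}$-tight, i.e. $|X\cup Y|<f_{\alpha}(X\cup Y)$ (it is sparse, so $\leq$, and not tight means $<$), equivalently $d_{X\cup Y}>\ell-\beta(X\cup Y)$. Then from $d_X+d_Y\geq d_{X\cup Y}+d_{X\cap Y}$ and $d_X=\ell-\beta(X)$, $d_Y=\ell-\beta(Y)$, $d_{X\cap Y}\geq\ell-\beta(X\cap Y)$ I get $\beta(X)+\beta(Y)<\beta(X\cup Y)+\beta(X\cap Y)$, a \emph{strict} violation of what Lemma~\ref{lem:4} would give — but to invoke Lemma~\ref{lem:4} I still need its cardinality hypothesis, so the two approaches converge. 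The main obstacle, then, is exactly the bookkeeping around the cardinality hypothesis of Lemma~\ref{lem:4} and the connectedness of $X\cap Y$: these are the places where the proof genuinely uses the structure of $f_{\alpha}$-tightness (not just sparsity) and the bound $\ell\leq 2k-1$, and where an off-by-one or a missed edge case (loops at the base, $X\cap Y$ a single vertex with only loops, etc.) could break the argument. Everything else — the submodular count identity, the passage from tight inequalities back to $f_{\alpha}$-tightness of $X\cup Y$ — is routine. I would therefore structure the written proof as: (1) state and verify the count submodularity of $d$; (2) show $X\cap Y$ is connected (short count using tightness); (3) verify the cardinality hypothesis of Lemma~\ref{lem:4}; (4) apply Lemma~\ref{lem:4} to get $\beta(X)+\beta(Y)\geq\beta(X\cup Y)+\beta(X\cap Y)$; (5) combine with step (1) to conclude all inequalities are equalities, hence $X\cup Y$ is $f_{\alpha}$-tight.
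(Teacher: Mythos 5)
The overall skeleton — combine a count identity for $d$ with the $\beta$-submodularity delivered by Lemma~\ref{lem:4}, then force all inequalities to be equalities — is indeed the paper's strategy, and your Case analysis of the cardinality hypothesis roughly parallels the paper's Claim~\ref{claim:1}. But there is one genuine gap, and unfortunately it is exactly where the hard work lives.

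You plan to show that ``$X\cap Y$ is connected'' and then invoke Lemma~\ref{lem:4}. But the first hypothesis of Lemma~\ref{lem:4} is that the \emph{graph} $(V(X)\cap V(Y),\,X\cap Y)$ is connected, which is strictly stronger than connectedness of the edge set $X\cap Y$: the vertex set $V(X)\cap V(Y)$ may contain vertices incident to no edge of $X\cap Y$. In the paper's notation, you need $c_0=0$ (no trivial components), not merely $c_1=1$. Your submodular inequality $|V(X)|+|V(Y)|\geq |V(X\cup Y)|+|V(X\cap Y)|$ is exactly off by $c_0$ from the identity the paper actually uses, namely $d_{X\cup Y}=d_X+d_Y-kc_0-d_{X\cap Y}$. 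Under $f_\alpha$-tightness one can rule out $c_0\geq 2$, but $c_0=1$ survives (there can be one isolated vertex $v$ of $V(X)\cap V(Y)$ that is not touched by $X\cap Y$), and in that case Lemma~\ref{lem:4} is simply not applicable as stated. This is precisely the paper's Case~2, and it is handled by a nontrivial detour: split the vertex $v$ in $G_{X\cup Y}$ into $v_X$ and $v_Y$, apply Lemma~\ref{lem:4} in the split graph $(H,\psi')$ (where the hypothesis now holds), and then bound the increase of $\beta(X\cup Y)$ upon re-identifying the two vertices by $k$, using the smoothness condition~(\ref{eq:assumption}) and the near-balancedness characterization from Proposition~\ref{prop:0}. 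The extra $-kc_0=-k$ in the exact $d$-identity then exactly absorbs this increase. Your proposal does not anticipate this case at all — you worry about loops and single vertices, but not about isolated vertices of $V(X)\cap V(Y)$ — so as written the argument would fail whenever $c_0=1$.

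One smaller caveat: your verification of the cardinality hypothesis of Lemma~\ref{lem:4} is running in a circle (you want $\beta(X\cap Y)\geq\beta(X)+\beta(Y)-\beta(X\cup Y)$ to prove the very submodular inequality that Lemma~\ref{lem:4} would supply). The paper avoids this by reading off the cardinality bound directly from the exact $d$-identity and the assumption $d_{X\cup Y}>0$ (Claim~\ref{claim:1}): if the bound failed, then $d_{X\cap Y}\geq 2\ell-\beta(X)-\beta(Y)$ and the identity forces $d_{X\cup Y}\leq -kc_0\leq 0$, done. So the cardinality hypothesis is cheap; the connectedness hypothesis when $c_0=1$ is the real obstacle, and that is what your proposal misses.
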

\begin{proof}
Since $(G,\psi)$ is $f_{\alpha}$-sparse, we have $d_{X\cup Y}\geq \ell-\beta(X\cup Y)$,
and what we have to prove is $d_{X\cup Y}\leq \ell-\beta(X\cup Y)$.
In particular, if $d_{X\cup Y}\leq 0$, then $X\cup Y$ is $f_{\alpha}$-tight.

Let $G_1=(V(X)\cap V(Y), X\cap Y)$.
Let $c_0$ and $c_1$ be the numbers of trivial and non-trivial connected components in $G_1$, where a connected component is said to be trivial if it consists of a single vertex.
Without loss of generality we assume $\beta(X)\geq \beta(Y)$.
Due to the monotonicity of $\beta$, we have $\beta(Y)\geq \beta(F)$ for each edge set $F$ of the connected component of $G_1$.
Hence 
\begin{align}
\nonumber
d_{X\cup Y}&=k|V(X\cup Y)|-|X\cup Y| \\ \nonumber
&=k(|V(X)|+|V(Y)|-|V(X)\cap V(Y)|)-(|X|+|Y|-|X\cap Y|) \\ \label{eq:tight1}
&= d_X+d_Y-kc_0-d_{X\cap Y} \\ \label{eq:lem48_-1} 
&=2\ell-\beta(X)-\beta(Y)-kc_0-d_{X\cap Y} \\ \label{eq:lem48_0}
&\leq 2\ell-\beta(X)-\beta(Y)-kc_0-(\ell-\beta(Y))c_1 \\
&=\ell-\beta(X)-kc_0-(\ell-\beta(Y))(c_1-1). \label{eq:lem48_1}
\end{align}
We first remark the following.
\begin{claim}
\label{claim:1}
If  $d_{X\cup Y}>0$, then 
$|X\cap Y|> k|V(X\cap Y)|-2\ell+\beta(X)+\beta(Y)$, $c_0\leq 1$, and $c_1=1$ hold.
\end{claim}
\begin{proof}
If $|X\cap Y|\leq k|V(X\cap Y)|-2\ell+\beta(X)+\beta(Y)$, then $d_{X\cap Y}=k|V(X\cap Y)|-|X\cap Y|\geq 2\ell-\beta(X)-\beta(Y)$.
Combining this with (\ref{eq:lem48_-1}), we get 
$d_{X\cup Y}\leq -kc_0\leq 0$.

If $c_1\geq 2$, then we have $d_{X\cup Y}\leq 0$ by (\ref{eq:lem48_1}).

If $c_1\leq 1$, then $c_1=1$ holds by $X\cap Y\neq \emptyset$.
Now (\ref{eq:lem48_1}) implies $0\leq d_{X\cup Y}\leq \ell-\beta(X)-kc_0$, and hence 
$kc_0\leq \ell\leq 2k-1$. Therefore $c_0\leq 1$. 
\end{proof}


As remarked at the beginning of the proof, $d_{X\cup Y}\leq 0$ immediately implies the $f_{\alpha}$-tightness of $X\cup Y$.
Therefore, we may assume $d_{X\cup Y}>0$, and by Claim~\ref{claim:1} we have
$c_0\leq 1$, $c_1=1$, and 
\begin{equation}
\label{eq:lem48_2}
|X\cap Y|> k|V(X\cap Y)|-2\ell+\min\{2k, \beta(X)+\beta(Y)\}. 
\end{equation}
By $c_1=1$, $X\cap Y$ is connected.
We split the proof into two cases depending on the value of $(c_0, c_1)$.

\medskip
\noindent(Case 1) Suppose that $(c_0, c_1)=(0,1)$.
By (\ref{eq:tight1}), we have 
\begin{equation}
\label{eq:tight2}
\begin{split}
\ell-\beta(X\cup Y)\leq d_{X\cup Y}&=  d_X+d_Y-d_{X\cap Y} \\ 
&\leq \ell-\beta(X)-\beta(Y)+\beta(X\cap Y).
\end{split}
\end{equation}
By $(c_0,c_1)=(0,1)$ and (\ref{eq:lem48_2}), we can apply Lemma~\ref{lem:4} to get
$\beta(X)+\beta(Y)\geq \beta(X\cap Y)+\beta(X\cup Y)$.
This means that each inequality holds with equality in (\ref{eq:tight2}),
and in particular we get $d_{X\cup Y}=\ell-\beta(X\cup Y)$.
In other words, $X\cup Y$ is $f_{\alpha}$-tight.

\medskip
\noindent
(Case 2) Suppose that $(c_0, c_1)=(1,1)$.
By  (\ref{eq:tight1}), we have 
\begin{equation}
\label{eq:tight3}
\begin{split}
\ell-\beta(X\cup Y)\leq d_{X\cup Y}&\leq  d_X+d_Y-d_{X\cap Y}-k \\ 
&\leq \ell-\beta(X)-\beta(Y)+\beta(X\cap Y)-k.
\end{split}
\end{equation}
Hence, to prove $d_{X\cup Y}=\ell-\beta(X\cup Y)$,  it suffices to show that 
 \begin{equation}
 \label{eq:tight4}
 \beta(X)+\beta(Y)\geq \beta(X\cup Y)+\beta(X\cap Y)-k.
 \end{equation}
 
 Let $v$ be the vertex isolated in $G_1$, and assume that all edges in $G$ incident to $v$ are directed to $v$.
  Since $(c_0,c_1)=(1,1)$, there is a unique  fraction of $v$ in $G_Y$ whose edge set intersects $X$.  See Figure~\ref{fig:lem48}, and denote the edge set of the fraction by $Y'$.

 We take a spanning tree $T$ of $G_{X\cup Y}$ such that 
 $T\cap X\cap Y$ is a spanning tree of $G_{X\cap Y}$,
 $T\cap X$ is a spanning tree of $G_X$, and $T\cap Y_v'=\emptyset$.
 Let $\psi'$ be a $T$-respecting equivalent gain function
 and let $\Gamma_{Y}=\langle Y \rangle_{u,\psi'}$ for some $u\in V(Y')\setminus \{v\}$.
 Take an edge $e\in Y_v'$ and let $g=\psi'(e)$.
 For each $f\in Y_v'$, there is a closed walk in $(T\cap Y)\cup \{e,f\}$ starting at $u$ and passing through $e$ and $f$ consecutively.
 The gain of this walk is $\psi'(e)\psi'(f)^{-1}$, and hence $\psi'(e)\psi'(f)^{-1} \in \Gamma_Y$.
 This implies 
 \begin{equation}
 \label{eq:lem48_4}
 \text{$\psi'(f)\in \Gamma_{Y} g$ for each $f\in Y_v'$.}
 \end{equation}
 On the other hand, for $f\in Y\setminus (Y'\cup T)$, 
 there is a closed walk in $(T\cap Y)\cup \{e,f\}$ starting at $u$ and passing through $e$, $f$, and then $e$ (in the reversed direction for the last $e$).
 Its gain is $g\psi'(f)g^{-1}$, and we get 
 \begin{equation}
 \label{eq:lem48_5}
 \text{$\psi'(f)\in g^{-1}\Gamma_{Y} g$ for each $f\in Y\setminus Y_v'$.}
 \end{equation}
 Also, since $X\cup Y$ contains a cycle with gain $g$, we have 
 \begin{equation}
 \label{eq:lem48_6}
\text{$\langle X\cup Y\rangle_{u,\psi'}=\langle \psi'(X)\cup \Gamma_{Y}\cup \{g\}\rangle$ in $(G,\psi')$.}
\end{equation} 
 
 \begin{figure}
 \centering
 \begin{minipage}{0.45\textwidth}
 \centering
 \includegraphics[scale=0.8]{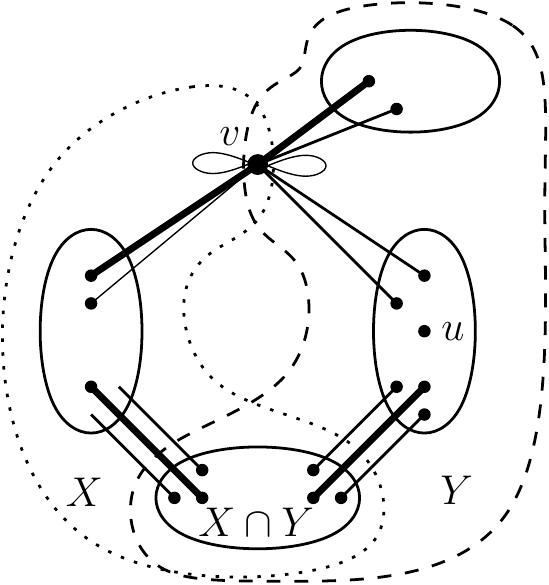}
 \par
 (a)
 \end{minipage}
 \begin{minipage}{0.45\textwidth}
 \centering
 \includegraphics[scale=0.8]{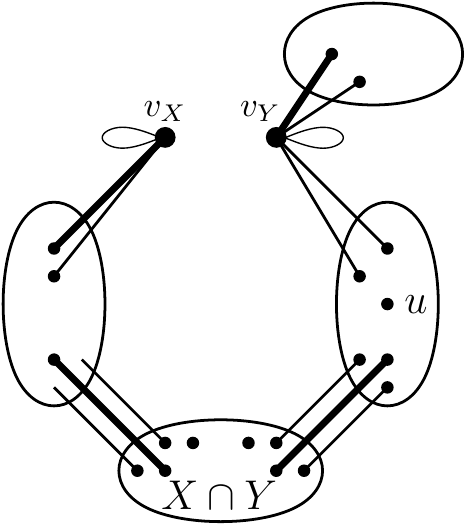}
 \par
 (b)
 \end{minipage}
\caption{Proof of Lemma~\ref{lem:main1}.
(a) $G_{X\cup Y}$, where $G_X$ is the dotted region and $G_Y$ is the dashed region. 
The bold edges represent edges in $T$. 
(b) $(H,\psi')$.}
\label{fig:lem48}
\end{figure}
  
 Now to see (\ref{eq:tight4}), we consider $(H,\psi')$ obtained from $(G_{X\cup Y}, \psi')$ by splitting $v$ into two vertices $v_X$ and $v_Y$ such that all edges in $X_v$ are incident to $v_X$ and those in $Y_v$ are incident to $v_Y$. 
 Then $V(X\cap Y)=V(X)\cap V(Y)$ in the resulting graph (Figure~\ref{fig:lem48}(b)),
 and by Lemma~\ref{lem:4} we have 
 $\beta(X)+\beta(Y)\geq \beta(X\cap Y)+\beta(X\cup Y)$ in $(H,\psi')$.
 We now identify the two split  vertices of $H$ to get back $G_{X\cup Y}$.
 Then $\beta(X\cup Y)$ may increase, but we claim that the amount of the increase is bounded by $k$.
 To see this, observe that  $\langle X\cup Y\rangle_{u,\psi'}=\langle \psi'(X)\cup \Gamma_Y\rangle$ in $(H,\psi')$ by (\ref{eq:lem48_4}) and (\ref{eq:lem48_5}).
 On the other hand,  by (\ref{eq:lem48_6}),  $\langle X\cup Y\rangle_{u,\psi'}=\langle \psi'(X)\cup \Gamma_Y\cup \{g\}\rangle$ in $(G_{X\cup Y},\psi')$.
 Therefore, if $\alpha(X\cup Y)$ changes by more than $k$ (i.e., $\alpha(\psi'(X)\cup \Gamma_Y\cup \{g\})-\alpha(\psi'(X)\cup \Gamma_Y)>k$), 
 then $\psi'(X)\cup \Gamma_Y=\{1_{\Gamma}\}$ by (\ref{eq:assumption}).
 This means that $X\cup Y$ is near-balanced in $(G,\psi')$,
 and $\beta(X\cup Y)$ is bounded by $k$ after the identification.
 Hence the increase of the $\beta$-value is bounded by $k$ when identifying the split vertices,
 and we obtain (\ref{eq:tight4}).
\end{proof}

\begin{lemma}
\label{lem:main2}
Let $X\in {\cal C}(G)$ be an $f_{\alpha}$-tight set, 
$Y\in {\cal C}(G)$ be an $f_{\alpha}$-full set, and $e\in E(G)\setminus Y$.
Suppose that  $X\subseteq Y$, $X+e\in {\cal C}(G)$, and $f_{\alpha}(X+e)=f_{\alpha}(X)$.
Then $f_{\alpha}(Y+e)=f_{\alpha}(Y)$.
Moreover $Y+e$ is $f_{\alpha}$-full.
\end{lemma}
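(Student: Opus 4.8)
The statement asserts that adding an edge $e$ that is ``critically tight'' over the tight subgraph $X$ (i.e.\ $f_\alpha(X+e)=f_\alpha(X)$) stays critically tight over any $f_\alpha$-full superset $Y\supseteq X$, and moreover preserves $f_\alpha$-fullness. My plan is to work with the connected component $Z$ of $(Y+e)$ containing $e$, show $Z$ is $f_\alpha$-tight, and then bookkeep the fullness certificate. First I would reduce to the case where $Y+e$ is connected: since $X\subseteq Y$ and $X+e$ is connected, $e$ joins $X$ to (possibly) other components of $Y$, so the component of $Y+e$ through $e$ is $Y'+e$ where $Y'$ is the union of the components of $Y$ that $e$ touches (one of which contains $X$); the other components of $Y$ are unaffected, so it suffices to handle $Y'$. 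Renaming, assume $Y+e\in{\cal C}(G)$.

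\textbf{Step 1: the count.} Since $(G,\psi)$ is $f_\alpha$-sparse, $|Y+e|\le f_\alpha(Y+e)$, so I only need $|Y+e|\ge f_\alpha(Y+e)$, i.e.\ $|Y|+1\ge k|V(Y+e)|-\ell+\beta(Y+e)$. There are two subcases according to whether $e$ adds a new vertex. If $V(Y+e)=V(Y)$ then I must show $|Y|+1\ge k|V(Y)|-\ell+\beta(Y+e)$; since $Y$ is $f_\alpha$-full I have $|Y|\ge k|V(Y)|-\ell+\min\{\beta(Y),2k-\ell+1\}$, so it would suffice to know $\beta(Y+e)\le \min\{\beta(Y),2k-\ell+1\}+1$; the monotonicity of $\beta$ (from (c3)) gives $\beta(Y+e)\ge\beta(Y)$, and $f_\alpha(X+e)=f_\alpha(X)$ with $X$ tight forces $\beta(X+e)=\beta(X)+k|V(X+e)|-k|V(X)|+1$... here I must be careful: $e$ might or might not add a vertex to $X$. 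If $e$ adds no vertex to $X$ either, then $\beta(X+e)=\beta(X)+1$, and since $X$ is tight and hence $f_\alpha$-full, $X+e$ is exactly at the threshold — the case analysis on near-balancedness of $X$ versus $X+e$ (using that $\beta$ jumps by at most $k$ under one edge addition, which follows from (c2) as in Lemma~\ref{lem:4}) pins down $\beta(X+e)$, hence bounds $\beta(Y+e)$ via monotonicity and the fact that $X+e\subseteq Y+e$ and $Y+e\supseteq X+e$ share the $\alpha$-generated subgroup up to the new generator.

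\textbf{Step 2: the mechanism.} The real content is: the single edge $e$ carries a ``new'' group element $g$ (choose a spanning tree $T$ of $G_{Y}$ through $G_X$ and make $\psi$ be $T$-respecting, then $g=\psi(e)$ relative to the fundamental cycle of $e$), and the jump $f_\alpha(X+e)-f_\alpha(X)$ being $0$ records precisely that $\langle\psi(X)\cup\{g\}\rangle$ has $\alpha$-value large enough, or that $X+e$ becomes near-balanced, to exactly cancel the $+1$ from $|X+e|=|X|+1$. I would then argue $\langle\psi(Y)\cup\{g\}\rangle$ and $\langle\psi(Y)\rangle$ differ by the same new generator, so $\tilde\alpha(Y+e)-\tilde\alpha(Y)=\alpha(\psi(Y)\cup\{g\})-\alpha(\psi(Y))$, and by submodularity (c2) applied to $\psi(X)\cup\{g\}$ and $\psi(Y)$ this is $\le \alpha(\psi(X)\cup\{g\})-\alpha(\psi(X))=\tilde\alpha(X+e)-\tilde\alpha(X)$. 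Combined with the near-balancedness bookkeeping — here I would invoke Proposition~\ref{prop:0}/Proposition~\ref{prop:1} to track whether $Y+e$ is near-balanced given that $X+e$ is, and Lemma~\ref{lem:alpha1} if $X+e$ is $\alpha$-critical — this yields $f_\alpha(Y+e)\le f_\alpha(Y)$, hence equality, hence $Y+e$ is $f_\alpha$-tight.

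\textbf{Step 3: fullness.} Finally, to see $Y+e$ is $f_\alpha$-full I note any $f_\alpha$-tight graph is $f_\alpha$-full (stated in the excerpt), so once $Y+e$ is shown $f_\alpha$-tight we are done in the connected case; in the general (disconnected $Y+e$) reduction, the component through $e$ is tight hence full, and the other components of $Y$ were full to begin with, and $\beta$ and the cardinality inequality are checked componentwise, so the full certificate for $Y$ extends to $Y+e$. \textbf{The main obstacle} I anticipate is Step~1's case distinction when $X+e$ is near-balanced but $X$ is balanced, or when $X+e$ is $\alpha$-critical: there one must show $\beta$ does not jump by more than $\min\{k,\dots\}$ over the larger set $Y+e$, which is exactly where the smoothness condition~(\ref{eq:assumption}) and the order-two exclusion enter — essentially one re-runs the three-case analysis of Lemma~\ref{lem:4} with $X+e$ in the role of the ``$\alpha$-critical'' set and $Y$ in the role of the other set, using that $X\subseteq Y$ makes the intersection hypotheses of Lemma~\ref{lem:4} automatic. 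I would try to phrase Step~2 so that it literally reduces to Lemma~\ref{lem:4} applied to $X+e$ and $Y$ (whose intersection $X$ is connected, $f_\alpha$-sparse, and large because $X$ is tight), extracting $\beta(X+e)+\beta(Y)\ge\beta(X)+\beta(Y+e)$ and then using $\beta(X+e)=\beta(X)$ (forced by $f_\alpha(X+e)=f_\alpha(X)$ when $|V(X+e)|=|V(X)|$, and handled separately when $e$ adds a vertex) to conclude.
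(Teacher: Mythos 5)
Your proposal circles the right target---apply Lemma~\ref{lem:4} to $X+e$ and $Y$, which is exactly what the paper does---but several of the steps leading there are wrong or confused, and the final paragraph's corrected version is what the proof actually needs.

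First, the goal is misread. The lemma asks for $f_\alpha(Y+e)=f_\alpha(Y)$ and for $Y+e$ to be $f_\alpha$-full, \emph{not} for $Y+e$ to be $f_\alpha$-tight, and there is no hypothesis that $(G,\psi)$ is $f_\alpha$-sparse. Indeed $|X+e|=|X|+1=f_\alpha(X)+1>f_\alpha(X+e)$, so $X+e$ (and hence $Y+e$) is never $f_\alpha$-sparse; your Step~1 appeal to ``$|Y+e|\le f_\alpha(Y+e)$ by sparsity'' and your Step~3 conclusion that $Y+e$ is tight are both unsound. Relatedly, $X+e$ is $f_\alpha$-\emph{full} only because one may take $G_X$ as the spanning certificate (it is $f_\alpha$-sparse, spans $V(X+e)$ once we know $V(X+e)=V(X)$, has $\beta(X)=\beta(X+e)$, and meets the count), not because it is sparse.

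Second, the case analysis on whether $e$ adds a vertex is vacuous, and your formula for $\beta(X+e)$ is wrong. Since $\beta$ is monotone, $f_\alpha(X+e)=f_\alpha(X)$ means $k(|V(X+e)|-|V(X)|)+\bigl(\beta(X+e)-\beta(X)\bigr)=0$ with both summands nonnegative; hence $V(X+e)=V(X)$ and $\beta(X+e)=\beta(X)$ (not $\beta(X)+1$, as in Step~1). Also, the opening reduction to ``$Y+e$ connected'' is unnecessary: $Y\in{\cal C}(G)$ is connected by definition, so $Y+e$ is automatically connected.

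Once these are repaired, the proof is short: $V(X+e)=V(X)\subseteq V(Y)$ gives $|V(Y+e)|=|V(Y)|$, so the only thing to show is $\beta(Y+e)=\beta(Y)$. If $\beta(Y)=\ell$ this is immediate from monotonicity; otherwise $\beta(Y)<\ell$, and then $(X+e)\cap Y=X$ is connected, $f_\alpha$-sparse, and satisfies $|X|=k|V(X)|-\ell+\beta(X)>k|V(X)|-2\ell+\beta(X+e)+\beta(Y)$ because $\beta(X+e)=\beta(X)$ and $\beta(Y)<\ell$. Lemma~\ref{lem:4} applied to the $f_\alpha$-full sets $X+e$ and $Y$ then yields $\beta(X+e)+\beta(Y)\ge\beta(X)+\beta(Y+e)$, i.e.\ $\beta(Y+e)\le\beta(Y)$, whence equality. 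Fullness of $Y+e$ then follows by reusing the certificate for $Y$ (its $\beta$-value and vertex set are unchanged). You gesture at this chain of reasoning only in your closing sentences; the earlier steps should be replaced by it.
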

\begin{proof}
Since $f_{\alpha}(X+e)=f_{\alpha}(X)$, it can be easily checked that 
both endvertices of $e$ are contained in $V(X)$ and $\beta(X)=\beta(X+e)$.
Thus $|V(Y+e)|=|V(Y)|$, and for  $f_{\alpha}(Y+e)=f_{\alpha}(Y)$ 
it suffices to show that $\beta(Y+e)=\beta(Y)$.
This is trivial if $\beta(Y)=\ell$. So we assume $\beta(Y)<\ell$.

Since the endvertices of $e$ are contained in $V(X)$ and $\beta(X+e)=\beta(X)$,
$X+e$ is $f_{\alpha}$-full.
Moreover, since $X$ is $f_{\alpha}$-tight, 
$|X|=k|V(X)|-\ell+\beta(X)> k|V(X)|-2\ell+\beta(X+e)+\beta(Y)$ 
by $\beta(X+e)=\beta(X)$ and $\beta(Y)<\ell$.
Therefore we can apply Lemma~\ref{lem:4} to get
$0=\beta(X+e)-\beta(X)\geq \beta(Y+e)-\beta(Y)$, 
implying the required relation, $\beta(Y+e)=\beta(Y)$.
This also implies that $Y+e$ is $f_{\alpha}$-full.
\end{proof}

We are now ready to prove Theorem~\ref{thm:main1}.
Our proof also gives an explicit formula for the rank and hence we shall restate it in a different form.
\begin{theorem}
Let $(G,\psi)$ be a $\Gamma$-labeled graph with $G=(V,E)$ and ${\cal I}_{\alpha}$ be the family of 
all $f_{\alpha}$-sparse edge subsets in $E$. 
Then $(E, {\cal I}_{\alpha})$ is a matroid on the ground-set $E$. 
The rank of  the matroid is equal to
\begin{equation*}
\label{eq:rank}
\min\left\{ |E_0|+\sum_{i=1}^t f_{\alpha}(E_i) \ \Bigg| \ E_0\subseteq E, E_i\in {\cal C}(G): 
\{E_0, E_1,\dots, E_t\} \text{ is a partition of } E\right\}.
\end{equation*}
\end{theorem}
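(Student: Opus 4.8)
The plan is to verify the matroid independence axioms for $\mathcal{I}_\alpha$ directly, using Lemma~\ref{lem:main1} and Lemma~\ref{lem:main2} as the two engines, in exact parallel with the classical argument for $(k,\ell)$-count matroids. First I would record the hereditary property: if $I$ is $f_\alpha$-sparse then so is every subset, which is immediate since $f_\alpha$ is defined component-wise and the condition $|F|\le f_\alpha(F)$ is required of all connected $F\subseteq I$. The work is entirely in the exchange axiom: given $f_\alpha$-sparse $I, J$ with $|I|<|J|$, I must find $e\in J\setminus I$ with $I+e$ $f_\alpha$-sparse.

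The key device is the notion of a \emph{tight component relative to $I$}: for $e\in E\setminus I$ with $I+e$ not $f_\alpha$-sparse, there is a violated connected set, hence (by submodularity-type arguments on the defect, or by taking a minimal violator) a unique maximal $f_\alpha$-tight set $X_e\subseteq I$ with both endpoints of $e$ in $V(X_e)$ and $f_\alpha(X_e+e)=f_\alpha(X_e)$. The uniqueness/maximality of $X_e$ is where Lemma~\ref{lem:main1} enters: the union of two such tight sets sharing a vertex is again tight, so among all $f_\alpha$-tight subsets of $I$ "spanning" $e$ there is a largest one. Now suppose for contradiction that $I+e$ is non-sparse for \emph{every} $e\in J\setminus I$. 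Let $Y$ be the union of $I$-edges lying in the $X_e$'s together with (a suitable closure of) $I$; more precisely one takes the $f_\alpha$-closure of $I$ restricted appropriately, and then applies Lemma~\ref{lem:main2} repeatedly: starting from a tight $X_e\subseteq I$ and the $f_\alpha$-full set $I$ itself, adding $e$ keeps $I+e$ $f_\alpha$-full, and then one can keep adding edges of $J$ one at a time, each time staying $f_\alpha$-full. After processing all of $J\setminus I$ this way, the resulting $f_\alpha$-full set $Z\supseteq I\cup J$ satisfies $|Z\cap I$-part$|$ accounting that forces $|J|\le |I|$, contradicting $|I|<|J|$. The bookkeeping here is the standard "tight-set covering" count: every edge of $J$ is either in $I$ or is spanned by a tight set, and tight sets have $|X|=k|V(X)|-\ell+\beta(X)$ while $f_\alpha$-fullness gives the matching lower bound needed to close the inequality.

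For the rank formula I would argue as usual: any partition $\{E_0,E_1,\dots,E_t\}$ with $E_i\in\mathcal{C}(G)$ gives the upper bound $r(E)\le |E_0|+\sum_i f_\alpha(E_i)$ because an independent set meets $E_0$ in at most $|E_0|$ edges and meets each $E_i$ in at most $f_\alpha(E_i)$ edges (here I need that $f_\alpha(E_i)$ bounds the size of any $f_\alpha$-sparse subset of a connected $E_i$, which follows from $f_\alpha$ being the sparsity bound on $E_i$ itself). For the matching lower bound, take a maximum $f_\alpha$-sparse set $B$ and let $E_1,\dots,E_t$ be the (edge sets of the) maximal $f_\alpha$-tight subsets of $B$ that are vertex-disjoint or can be made into a partition; by Lemma~\ref{lem:main1} the maximal tight sets containing a given edge are well-defined and pairwise either nested or edge-disjoint, so one extracts a partition of the "spanned" part of $E$, with $E_0$ the remaining edges, each of which lies in $B$. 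Then $|B|=|E_0|+\sum_i |E_i|=|E_0|+\sum_i f_\alpha(E_i)$, giving equality.

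The main obstacle I anticipate is the exchange-axiom argument in the presence of the $\min\{\tilde\alpha(F),k\}$ truncation on near-balanced sets: Lemma~\ref{lem:main1} and Lemma~\ref{lem:main2} are stated for $f_\alpha$-tight and $f_\alpha$-full sets respectively, and $f_\alpha$-fullness (not mere sparsity) is exactly the invariant one must maintain while greedily enlarging by edges of $J$ — the subtle point being that an $f_\alpha$-sparse set need not be $f_\alpha$-full, so one cannot start the enlargement from an arbitrary tight subset of $I$ but must be careful that $I$ itself, or the relevant tight sets, carry the fullness certificate (the three-bullet condition on $\beta$ and the reinforced cardinality lower bound $k|V|-\ell+\min\{\beta,2k-\ell+1\}$). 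Verifying that these certificates propagate correctly through the whole enlargement, and that the terminal count genuinely forces $|J|\le|I|$, is the delicate combinatorial core; everything else is the standard count-matroid template.
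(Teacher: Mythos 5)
Your overall architecture matches the paper's: verify the independence axioms, with all the work in the third one, using Lemma~\ref{lem:main1} (unions of intersecting tight sets are tight) and Lemma~\ref{lem:main2} (propagating $f_\alpha$-fullness when an edge is added to a full superset of a tight set) as the two engines, and then read off the rank formula from a partition into maximal tight sets plus leftovers. The paper phrases the third axiom as ``maximal $f_\alpha$-sparse subsets of any $E'$ have equal cardinality'' and proves it by exhibiting, for a maximal sparse $F\subseteq E'$, a valid partition $\{E_0,E_1,\dots,E_t\}$ of $E'$ with $|F|=|E_0|+\sum_i f_\alpha(E_i)$; your augmentation formulation is equivalent and would be driven by exactly the same partition.

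There is, however, a genuine gap in your treatment of the violating sets $X_e$. You assert that for each $e\in E\setminus F$ with $F+e$ not sparse there is ``a unique maximal $f_\alpha$-tight set $X_e\subseteq F$ with both endpoints of $e$ in $V(X_e)$.'' That is false in general: the minimal violator $X_e+e$ is connected, but $X_e$ itself need not be --- $e$ can be a bridge in $G_{X_e+e}$, in which case $X_e$ splits into two connected pieces $X_e^1, X_e^2$ and there is \emph{no} connected tight subset of $F$ containing both endpoints of $e$. The paper handles this explicitly (the set $B$): it shows via the sparsity count and monotonicity of $\beta$ that both $X_e^1$ and $X_e^2$ are $f_\alpha$-tight with $\beta=\ell$, and then merges the two partition classes containing them, replacing $E_1, E_2$ by $E_1\cup E_2+e$ while preserving $\mathrm{val}({\cal P})$. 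Your argument, as written, would only process edges whose $X_e$ is connected (the paper's set $A$), and the final bookkeeping inequality would fail to account for the bridge edges. Lemma~\ref{lem:main2} alone does not cover this case because its hypothesis requires $X\subseteq Y$ with $X+e$ connected \emph{and} $X$ itself tight (hence connected); when $e$ is a bridge you cannot invoke it with a single $X$. Relatedly, your phrase ``pairwise either nested or edge-disjoint'' should simply be ``pairwise disjoint'' --- maximality together with Lemma~\ref{lem:main1} forces disjointness, and nesting would contradict maximality.

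A secondary, smaller gap: your terminal count (``$Z\supseteq I\cup J$ forces $|J|\le|I|$'') is left unargued. The paper instead compares $|F|$ directly to $\mathrm{val}({\cal P})$, which is clean: $|F|\le\mathrm{val}({\cal P})$ for every valid ${\cal P}$ since any sparse set meets each connected $E_i$ in at most $f_\alpha(E_i)$ edges, and the constructed ${\cal P}$ attains equality for the chosen maximal $F$. You would need to make the analogous comparison explicit rather than appeal to an unspecified accounting over the enlarged full set $Z$.
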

\begin{proof}
We say that a partition ${\cal P} = \{E_0, E_1, \dots , E_t\}$ of $E$ is {\em valid} if 
$E_i\in {\cal C}(G)$ for $1\leq i\leq t$.
For a valid partition ${\cal P}$, we denote ${\rm val}({\cal P}) = |E_0|+\sum_{i=1}^t f_{\alpha}(E_i)$. 
We shall check the following independence axiom of matroids: (I1) $\emptyset \in  {\cal I}_{\alpha}$; 
(I2) for any $X,Y \subseteq E$ with $X\subseteq Y$, $Y\in {\cal I}_{\alpha}$ implies $X\in {\cal I}_{\alpha}$; 
(I3) for any $E' \subseteq E$, maximal subsets of $E'$ belonging to ${\cal I}_{\alpha}$ have the same cardinality.

It is obvious that ${\cal I}_{\alpha}$ satisfies (I1).
Also (I2) follows from the definition of the $f_{\alpha}$-sparsity. 
To see (I3), take  a maximal $f_{\alpha}$-sparse subset $F$ of $E$. 
For any valid partition ${\cal P}$, we have $|F| \leq {\rm val}({\cal P})$ 
by $|F| = \sum_{i=0}^t |F\cap E_i|\leq |F\cap E_0|+\sum_{i=1}^t f_{\alpha}(E_i)\leq {\rm val}({\cal P})$.
We shall prove that there is a valid partition ${\cal P}$ of $E$ 
with $|F| = {\rm val}({\cal P})$, from which (I3) follows.

Let $E_0$ be the set of edges which are not contained in any $f_{\alpha}$-tight set in $F$,
and consider the family $\{ F_1, F_2, \dots , F_t\}$ of all inclusion-wise maximal $f_{\alpha}$-tight sets in $F$. 
Then $E_0 \cup \bigcup_{i=1}^t F_i=F$ holds. 
Since $F_i \cap  F_j = \emptyset$ for every pair $1 \leq i < j \leq t$ by Lemma~\ref{lem:main1} and the maximality, ${\cal P}_F = \{E_0, F_1,F_2,...,F_t\}$ is a valid partition of $F$ 
and $|F| = {\rm val}({\cal P}_F)$ holds.

Now consider an edge $e = (u,v) \in E\setminus F$. 
Since $F$ is a maximal $f_{\alpha}$-sparse subset of $E$,  
there is a set $X_e \subseteq F$ with $X_e+e\in {\cal C}(G)$ and  $|X_e+e|>f_{\alpha}(X_e+e)$. 
Let $A=\{e\in E\setminus F: X_e \in {\cal C}(G)\}$ and 
$B=E\setminus (F\cup A)$.

For each $e\in A$, since $X_e$ is $f_{\alpha}$-sparse, 
we have $|X_e|=f_{\alpha}(X_e)= f_{\alpha}(X_e +e)$, 
which implies that $X_e$ is  $f_{\alpha}$-tight and $X_e \subseteq  F_i$ for some $1\leq i\leq t$. 
Choose such an $F_i$ for each $e\in A$ 
and define $E_i =F_i\cup \{e\in A: F_i \text{ was chosen for } e \}$ for $1\leq i\leq t$. 
Then ${\cal P}=\{E_0, E_1,E_2,\dots ,E_t\}$ is a valid partition of $E\setminus B$. 
Moreover, repeated applications of Lemma~\ref{lem:main2} imply 
$f_{\alpha}(F_i)=f_{\alpha}(E_i)$ for every $1 \leq i \leq t$. 
Thus ${\rm val}({\cal P} ) = {\rm val}({\cal P}_F ) = |F|$.

In order to make ${\cal P}$ to a valid partition of $E$, we update ${\cal P}$ by the following process.
Consider any $e\in B$.
Since $X_e+e$ is connected but $X_e$ is not, $e$ is a bridge in $G_{X_e+e}$ and 
$X_e$ can be partitioned into two connected parts 
$X_e^1$ and $X_e^2$.
Due to the $f_{\alpha}$-sparsity, we have 
\begin{equation}
\label{eq:last}
\begin{split}
&k|V(X_e)|-\ell+\beta(X_e+e)=f_{\alpha}(X_e+e)<|X_e+e| \\
&=|X_e^1|+|X_e^2|+1\leq k|V(X_e)|-2\ell+\beta(X_e^1)+\beta(X_e^2)+1,
\end{split}
\end{equation}
implying $\beta(X_e^1)+\beta(X_e^2)\geq \ell+\beta(X_e+e)$.
On the other hand, by the monotonicity of $\beta$, 
$\beta(X_e^1)+\beta(X_e^2)\leq \ell+\beta(X_e+e)$.
Therefore we have $\beta(X_e^1)=\beta(X_e^2)=\beta(X_e+e)=\ell$,
and (\ref{eq:last}) implies that $X_e^1$ and $X_e^2$ are $f_{\alpha}$-tight.
Hence each of $X_e^1$ and $X_e^2$ is contained in some $E_i\in {\cal P}\setminus \{E_0\}$.

If $X_e^1$ and $X_e^2$ are both contained in the same $E_i$, 
then we have $f_{\alpha}(E_i+e)=k|V(E_i+e)|=k|V(E_i)|=f_{\alpha}(E_i)$ by $\ell\geq \beta(E_i)\geq \beta(X_e^1)=\ell$.
Hence we update ${\cal P}$ by replacing $E_i$ with $E_i+e$, which keeps ${\rm val}({\cal P})$.

If $X_e^1$ and $X_e^2$ are not contained in the same $E_i$, 
then without loss of generality assume that $E_i$ contains $X_e^i$ for $i=1,2$.
We have $f_{\alpha}(E_1\cup E_2+e)=k|V(E_1\cup E_2+e)|=k|V(E_1)|+k|V(E_2)|
=f_{\alpha}(E_1)+f_{\alpha}(E_2)$ by $\ell\geq \beta(E_i)\geq \beta(X_e^i)=\ell$ for each $i=1,2$.
Therefore we update ${\cal P}$ by removing $E_1$ and $E_2$ from ${\cal P}$ and inserting $E_1\cup E_2+e$. 
This again keeps ${\rm val}({\cal P})$. 

We perform the above modification one by one for each $e\in B$.
Since each update keeps ${\rm val}({\cal P})$, we finally get a valid partition ${\cal P}$ of $E$ with 
$|F|={\rm val}({\cal P})$.
This completes the proof.
\end{proof}

\section{Checking the Sparsity}
Let $k$ and $\ell$  be two integers with $k\geq 1$ and $0\leq \ell\leq 2k-1$,
and $\alpha$ be a polymatroidal function on $2^{\Gamma}$.
In this section we show how to check  the $f_{\alpha}$-sparsity of a given $\Gamma$-labeled graph $(G,\psi)$ in polynomial time. This also gives an algorithm for checking the independence and computing the rank of the matroid induced by $f_{\alpha}$.
We assume that we are given an oracle that returns $\alpha(X)$ in polynomial time 
for each $X\subseteq \Gamma$.

We first give an algorithm to compute $f_{\alpha}(F)$ for a given $F\in {\cal C}(G)$.
We need to show how to compute $\beta(F)$.
To compute $\tilde{\alpha}(F)$, we fist take any spanning tree $T$ in $G_F$,
and compute the $T$-respecting equivalent $\psi'$ by switching.
Then $\psi'(F)$ generates $\langle F\rangle_{v,\psi'}$ for any $v\in V(F)$ (see, e.g., \cite{jkt} for a detailed exposition), and hence $\tilde{\alpha}(F)=\alpha(\psi'(F))$.
Thus $\tilde{\alpha}(F)$ can be computed in polynomial time.

To compute $\beta(F)$, it remains to check whether $F$ is near-balanced.
For this,  we test whether a vertex $v\in V(F)$ can be a base or not as follows.
We take a spanning tree $T$ of $G_F$ by extending a spanning forest of $G_F-v$,
and let $\psi'$ be a $T$-respecting equivalent gain function.
Proposition~\ref{prop:0} implies that $v$ is a base for the near-balancedness of $F$ if and only if 
$F$ is unbalanced and
there is a non-identity element $g\in \Gamma$ such that 
\begin{itemize}
\item $\psi(e)=1_{\Gamma}$ for  $e\in F\setminus F_v$,
\item  for each fraction $S$ of $G_F$ at $v$, either $\psi(e)\in \{1_{\Gamma}, g\}$ or $\psi(e)\in \{1_{\Gamma}, g^{-1}\}$ for $e\in F_v\cap E(S)$, 
\item $\psi(e)\in \{g, g^{-1}\}$ for every $(L_v\cap F)\setminus L_v^\circ$.
\end{itemize}
%
Thus one can check whether $v$ can be a base by computing a $T$-respecting equivalent gain function $\psi'$.

For  checking $f_{\alpha}$-sparsity, we need the following simple lemma.
Recall that the $(k,\ell)$-count matroid ${\cal M}_{k,\ell}(G)$ of $G$ consists of the set of all $(k,\ell)$-sparse edge sets in $G$ as  the independent set family.

\begin{lemma}
\label{lem:algo1}
$(G,\psi)$ is $f_{\alpha}$-sparse if and only if 
$G$ is $(k,0)$-sparse and 
$|C|\leq f_{\alpha}(C)$ for every nonempty $C\subseteq E(G)$ 
that is a circuit in ${\cal M}_{k,\ell'}(G)$ for some $1\leq \ell'\leq \ell$.
\end{lemma}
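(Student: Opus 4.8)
\textbf{Proof plan for Lemma~\ref{lem:algo1}.}

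The plan is to prove the two directions separately, with the nontrivial content sitting in the ``if'' direction. The ``only if'' direction is immediate: if $(G,\psi)$ is $f_{\alpha}$-sparse, then in particular every nonempty $C\in{\cal C}(G)$ satisfies $|C|\le f_{\alpha}(C)\le k|V(C)|-\ell+\min\{\beta(C),2k-\ell\}\le k|V(C)|$ (using $\ell\le 2k-1$ and $\beta(C)\le\ell$ when $C$ is not near-balanced, or $\beta(C)\le k$ when it is), so $G$ is $(k,0)$-sparse; and of course $|C|\le f_{\alpha}(C)$ holds for every connected $C$, in particular for every circuit of ${\cal M}_{k,\ell'}(G)$ with $1\le\ell'\le\ell$.

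For the ``if'' direction, suppose $G$ is $(k,0)$-sparse and the count condition holds for every ${\cal M}_{k,\ell'}$-circuit, $1\le\ell'\le\ell$, but for contradiction some nonempty connected $F\subseteq E(G)$ violates $f_{\alpha}$-sparsity, i.e.\ $|F|>f_{\alpha}(F)=k|V(F)|-\ell+\beta(F)$. Choose such an $F$ with $|V(F)|$ minimum. First I would observe that $\beta(F)\ge 1$: otherwise $F$ is balanced, $f_{\alpha}(F)=k|V(F)|-\ell$, and $|F|>k|V(F)|-\ell$ would mean $F$ is not $(k,\ell)$-sparse, so $F$ contains an ${\cal M}_{k,\ell}$-circuit $C$; but an ${\cal M}_{k,\ell}$-circuit has $|C|=k|V(C)|-\ell+1$, hence $f_{\alpha}(C)\ge k|V(C)|-\ell\ge |C|-1$, and since $C\subseteq F$ is balanced we get $|C|=k|V(C)|-\ell+1>f_{\alpha}(C)$, contradicting the hypothesis. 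More generally, set $\ell'=\ell-\beta(F)+1$; since $1\le\ell'\le\ell$ (using $0\le\beta(F)\le\ell$ and $\beta(F)\ge 1$ once we have handled the balanced case), the inequality $|F|>k|V(F)|-\ell+\beta(F)=k|V(F)|-\ell'$ says $F$ is not $(k,\ell')$-sparse, so $F$ contains an ${\cal M}_{k,\ell'}$-circuit $C$, which satisfies $|C|=k|V(C)|-\ell'+1=k|V(C)|-\ell+\beta(F)$. By hypothesis $|C|\le f_{\alpha}(C)=k|V(C)|-\ell+\beta(C)$ (if $C$ not near-balanced) or $k|V(C)|-\ell+\min\{\tilde\alpha(C),k\}$ (if near-balanced); combining, $\beta(F)\le\beta(C)$, hence by monotonicity of $\beta$ (and $\beta(C)\le\beta(F)$ since $C\subseteq F$) we get $\beta(C)=\beta(F)$ and so $|C|=f_{\alpha}(C)$, i.e.\ $C$ itself is $f_{\alpha}$-tight, not a violator; this forces $V(C)\subsetneq V(F)$ is impossible to exploit directly, so instead I would run a standard contraction/minimality argument: take $C$ to be such a circuit with $|V(C)|$ maximal, note $C$ is $f_{\alpha}$-sparse, and then since $|F\setminus C|$ still must violate sparsity relative to the contracted graph, use Lemma~\ref{lem:main1} (the union of $f_{\alpha}$-tight sets sharing a vertex is $f_{\alpha}$-tight) to merge $C$ with a smaller violator inside $F$ and derive a contradiction with minimality of $|V(F)|$.

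The main obstacle I expect is making the last reduction airtight: passing from ``$F$ is not $f_{\alpha}$-sparse'' to locating a violator strictly inside $F$ after removing or contracting an $f_{\alpha}$-tight circuit $C$, and verifying that the residual object is still connected and still violates the relevant $(k,\ell')$-count so that the induction hypothesis or Lemma~\ref{lem:main1} applies. The delicate point is that $\beta$ is not additive, so one cannot simply subtract counts on $C$ and $F\setminus C$; one has to argue that $\beta(F)=\beta(C)$ (already extracted above) lets $f_{\alpha}$ behave additively enough along the decomposition $F = C \cup (F\setminus C)$, or alternatively contract $C$ to a single vertex, observe the contracted graph is still $(k,0)$-sparse and inherits the circuit hypothesis, and that the image of $F$ violates $f_{\alpha}$-sparsity there with strictly fewer vertices. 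Handling the bookkeeping when $C$ is near-balanced (where $\beta(C)=\min\{\tilde\alpha(C),k\}$ can be strictly below $\tilde\alpha(C)$) is the fiddly sub-case, and here Lemma~\ref{lem:alpha1} and the near-balancedness structure from Section~3 will be what keeps things under control.
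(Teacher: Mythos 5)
Your ``only if'' direction is fine, and your instinct to convert the $f_{\alpha}$-violation into a $(k,\ell')$-violation and pull out a circuit is also the right one. The gap is in your choice of $\ell'$. You set $\ell'=\ell-\beta(F)+1$, which gives a circuit $C\subseteq F$ with $|C|=k|V(C)|-\ell'+1$, and then the hypothesis on circuits forces $\beta(C)\geq\beta(F)$; combined with monotonicity this only yields $\beta(C)=\beta(F)$ and that $C$ is $f_{\alpha}$-tight. That is not a contradiction, and you correctly notice you are stuck. The contraction/minimality route you sketch to finish is not carried out, and the specific step of locating a smaller connected violator inside $F$ after removing a tight circuit and feeding it to Lemma~\ref{lem:main1} is exactly where the non-additivity of $\beta$ bites; as written it is a genuine gap, not a routine cleanup.

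The paper avoids all of this by choosing $\ell'$ to match the \emph{deficiency} of $F$ rather than its $\beta$-value: set $\ell'=k|V(F)|-|F|+1$. The $(k,0)$-sparsity of $G$ gives $\ell'\geq 1$, and $|F|>f_{\alpha}(F)\geq k|V(F)|-\ell$ gives $\ell'\leq\ell$. With this choice $F$ is dependent in ${\cal M}_{k,\ell'}(G)$ and contains a circuit $C$ with $|C|=k|V(C)|-\ell'+1$, so $k|V(C)|-|C|=k|V(F)|-|F|$. Then
\[
f_{\alpha}(C)-|C| \;=\; \bigl(k|V(C)|-|C|\bigr)-\ell+\beta(C) \;\leq\; \bigl(k|V(F)|-|F|\bigr)-\ell+\beta(F) \;=\; f_{\alpha}(F)-|F| \;<\; 0
\]
by monotonicity of $\beta$, which directly contradicts the circuit hypothesis $|C|\leq f_{\alpha}(C)$. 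No minimality, no contraction, and no case analysis on near-balancedness are needed. I would encourage you to swap your $\ell'$ for this one and drop the entire contraction scaffold.
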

\begin{proof}
The necessity is trivial, and  we prove the sufficiency.
Suppose to the contrary that $(G,\psi)$ is not $f_{\alpha}$-sparse.
Take any $F\in {\cal C}(G)$ such that $|F|>f_{\alpha}(F)$.
Then $|F|>f_{\alpha}(F)\geq k|V(F)|-\ell$.
On the other hand, since $G$ is $(k,0)$-sparse, we have $|F|\leq k|V(F)|$.
Therefore, there is an integer $\ell'$ with $1\leq \ell'\leq \ell$ such that $|F|=k|V(F)|-\ell'+1$.
Since $F$ is dependent in  ${\cal M}_{k,\ell'}(G)$, 
$F$ contains a circuit $C$ in ${\cal M}_{k,\ell'}(G)$.
Note that $k|V(F)|-\ell'-|F|=-1=k|V(C)|-\ell'-|C|$.
Hence by the monotonicity of $\beta$, we get $0\leq f_{\alpha}(C)-|C|\leq f_{\alpha}(F)-|F|<0$, which is a contradiction.
\end{proof}

Based on Lemma~\ref{lem:algo1} we have the following naive algorithm for checking $f_{\alpha}$-sparsity:
\begin{enumerate}
\item Check whether $G$ is $(k,0)$-sparse. If $G$ is not $(k,0)$-sparse, then $(G,\psi)$ is not $f_{\alpha}$-sparse. 
\item For each $\ell'$ with $1\leq \ell'\leq \ell$, enumerate all the circuits in ${\cal M}_{k,\ell'}(G)$
and check wether $|C|\leq f_{\alpha}(C)$ holds for each circuit $C$ in  ${\cal M}_{k,\ell'}(G)$.
If there is a circuit $C$ with $|C|>f_{\alpha}(C)$, then $(G,\psi)$ is not $f_{\alpha}$-sparse;
otherwise it is $f_{\alpha}$-sparse.
\end{enumerate}

It is well-known that checking $(k,0)$-sparsity can be reduced to computing a maximum matching in an auxiliary bipartite graph of size $|V(G)|$, which can be done in $O(|V(G)|^{3/2})$ time (see, e.g., \cite{Frank2011}).
As for the second step, observe that the number of circuits in ${\cal M}_{k,\ell'}(G)$ is $O(|V(G)|^{\ell'-1})$.
This can be seen as follows.
If ${\cal M}_{k,\ell'}(G)$ is not connected (in the matroid sense), then the number of circuits in each connected component $C$ is $O(|V(C)|^{\ell'-1})$ by induction and the sum over all components is $O(|V(G)|^{\ell'-1})$.  
Hence we may assume that ${\cal M}_{k,\ell'}(G)$ is connected,
and the rank of ${\cal M}_{k,\ell'}(G)$ is $k|V(G)|-\ell'$.
Since the size of the ground set is at most $k|V(G)|$ (as $G$ is $(k,0)$-sparse),  
the rank of the dual  of ${\cal M}_{k,\ell'}(G)$ is at most $\ell'$. 
Therefore the number of the hyperplanes in the dual is $O(|V(G)|^{\ell'-1})$, which in turn implies the claimed bound for the number of circuits.

It is known that  all the circuits in a matroid can be enumerated in time polynomial in the size of the ground set and the number of the circuits~\cite{s}, if a polynomial-time oracle for the rank function is available.  
In our case, the number of circuits is polynomial in $|V(G)|$ (assuming that $\ell$ is constant)
and the rank of  ${\cal M}_{k,\ell'}(G)$ can be computed in $O(|V(G)|^2)$ time (see, e.g., \cite{berg,lee}). 
Therefore, the second step can also be done in polynomial time.

Developing a practical polynomial time algorithm whose time complexity is $O(|V(G)|^c)$ for some constant $c$ irrelevant to $\ell$ is left as an open problem. 

\section*{Acknowledgement}
This work was supported by JSPS Postdoctoral Fellowships for Research Abroad, 
JSPS Grant-in-Aid for Young Scientist (B) 24740058, and JSPS Grant-in-Aid for Scientific Research (B) 25280004.


\begin{thebibliography}{99}

\bibitem{berg}
{\scshape A.~Berg and T.~Jord{\'a}n}, 
\newblock Algorithms for graph rigidity and scene analysis, 
\newblock {\it Proc.~11th Annual European Symposium on Algorithms (ESA)}, 
LNCS 2832, (2003), 78--89.



 \bibitem{dowling1973class}
{\scshape T.A. Dowling},
\newblock A class of geometric lattices based on finite groups,
 \newblock {\it  J.~Combin.~Theory Ser.~B}, (1973) \textbf{14}, 61--86.

\bibitem{e70}
{\scshape J.~Edmonds},
\newblock Submodular functions, matroids, and certain polyhedra, 
\newblock {\it Combinatorial Structures and Their Applications}, R. Guy, H. Hanani, N. Sauer, and J. Sch{\"o}nheim, eds., (1970), 69--87.

\bibitem{Frank2011}
{\scshape A.~Frank},
 \newblock Connections in combinatorial optimization, 
 \newblock  Oxford Lecture Series in Mathematics and Its Applications, Oxford University Press, (2011).

\bibitem{gt}
{\scshape J.~L.~Gross and T.~W.~Tucker}, 
\newblock Topological graph theory, 
\newblock Dover, New York, (1987).

\bibitem{ikeshita}
{\scshape R.~Ikeshita},
\newblock Infinitesimal rigidity of symmetric frameworks,
\newblock  Master Thesis, University of Tokyo, (2015).

\bibitem{jkt} 
{\scshape T.~Jord{\'a}n, V.~Kaszanitzky, and S.~Tanigawa},
\newblock  Gain-sparsity and symmetry-forced rigidity in the plane, 
\newblock  {\it Discrete \& Computational Geometry}, (2016) \textbf{55}, 314--372.

\bibitem{lee}
{\scshape A.~Lee and I.~Streinu}, 
\newblock Pebble game algorithms and sparse graphs, 
\newblock {\it Discrete Math.}, (2008) \textbf{308}, 1425--1437.

\bibitem{mt13}
{\scshape J.~Malestein and L.~Theran}, 
\newblock  Generic combinatorial rigidity of periodic frameworks,
\newblock  {\it Adv.~Math.}, (2013) \textbf{233}, 291--331.

\bibitem{mt14}
{\scshape J.~Malestein and L.~Theran}, 
\newblock  Frameworks with forced symmetry II: orientation-preserving crystallographic groups,
\newblock  {\it  Geometriae Dedicata}, (2014) \textbf{170},  219--262.

\bibitem{ns}
{\scshape T.~Nixon and B.~Schulze},
\newblock Symmetry-forced rigidity of frameworks on surfaces,
\newblock {\it Geometriae Dedicata}, (2016) \textbf{182}, 163--201.

\bibitem{ross2011}
{\scshape E.~Ross}, 
\newblock  Geometric and combinatorial rigidity of periodic frameworks as
  graphs on the torus, 
  \newblock Ph.D. thesis, York University, Toronto, (2011).
  
  \bibitem{s}
  {\scshape P.~Seymour}
  \newblock A note on hyperplane generation,
  \newblock {\it J.~Combin.~Theory Ser.~B}, (1994), \textbf{61}, 88-91.
  
\bibitem{st}
{\scshape B.~Schulze and S.~Tanigawa}, 
\newblock  Infinitesimal rigidity of symmetric frameworks,
\newblock  {\it SIAM Discrete Math.}, (2015) \textbf{29}, 1259--1286.

\bibitem{t}
{\scshape S.~Tanigawa},
\newblock  Matroids of gain graphs in applied discrete geometry,
\newblock  {\it Trans.~Amer.~Math.~Soc.}, (2015) \textbf{367}, 8597--8641.

\bibitem{Whitley:1997}
{\scshape W.~Whiteley}, 
\newblock Some matroids from discrete applied geometry,
\newblock {\it  Contemporary Mathematics}, (1996) \textbf{197}, 171--312.

\bibitem{whittle1989generalisation}
{\scshape G.~Whittle},
\newblock A generalisation of the matroid lift construction,
\newblock  {\itshape  Trans.~Amer.~Math.~Soc.}, (1989) \textbf{316}, 141--159.


\bibitem{zaslavsky1991biased}
{\scshape T.~Zaslavsky}, 
\newblock Biased graphs "{II}". the three matroids,
\newblock {\itshape J.~Combin.~Theory  Ser.~B}, (1991) \textbf{51}, 46--72.

\bibitem{zaslavsky1994}
{\scshape T.~Zaslavsky}, 
\newblock Frame matroids and biased graphs, 
\newblock {\itshape Eur.~J.~Combin.} (1994) {\bfseries 15}, 303--307
  




\end{thebibliography}
\end{document}